\documentclass[reqno, 12pt]{amsart}

\usepackage[utf8]{inputenc}
\usepackage{lmodern}
\usepackage[english]{babel}
\usepackage{csquotes}
\usepackage{amstext}
\usepackage{amsthm}
\usepackage{amsmath}
\usepackage{amssymb}
\usepackage{mathtools}
\usepackage[svgnames]{xcolor}
\usepackage[margin=1in]{geometry}
\usepackage{dsfont}
\usepackage{fancyvrb}
\usepackage{enumitem}
\usepackage{tikz}

\usepackage[
backend=biber,
style=numeric,
giveninits=true,
doi=false,
isbn=false,
url=false,
eprint=false,
maxnames=99
]{biblatex}
\DeclareListFormat{language}{}

\renewbibmacro{in:}{
    \ifentrytype{article}{}{\printtext{\bibstring{in}\intitlepunct}}}

\usepackage[unicode=true,
    pdfusetitle,
    bookmarks=true,
    bookmarksnumbered=false,
    bookmarksopen=false,
    breaklinks=false,
    pdfborder={0 0 1},
    backref=false,
    colorlinks=true]{hyperref}

\hypersetup{
    pdftitle={Defocusing NLS on graphs},
    pdfauthor={Élio Durand-Simonnet,
    Damien Galant and Boris Shakarov},
    pdfsubject={},
    pdfkeywords={},
    colorlinks=true,
    linkcolor=DarkBlue,
    citecolor=DarkRed,
    filecolor=DarkMagenta,
    urlcolor=DarkGreen}

\newtheorem{theorem}{Theorem}[section]
\newtheorem{lemma}[theorem]{Lemma}
\newtheorem{proposition}[theorem]{Proposition}
\newtheorem{corollary}[theorem]{Corollary}
\newtheorem{remark}[theorem]{Remark}
\theoremstyle{definition}
\newtheorem{definition}[theorem]{Definition}

\def\R{\mathbb R}
\def\C{\mathbb C}

\def\Z{\mathbb Z}

\def\G{\mathcal G}
\def\H{\operatorname{H}}
\def\DQH{\mathcal{D}(Q_{\H})}
\def\eps{\epsilon}
\def\Llow{\underline{L}}
\newcommand\MM{{\mathcal{M}}}

\DeclareMathOperator{\lspan}{span}

\DeclareMathOperator{\supp}{supp}

\DeclarePairedDelimiterX{\dual}[2]{\langle}{\rangle}{#1, #2}

\makeatletter
\def\munderbar#1{\underline{\sbox\tw@{$#1$}\dp\tw@\z@\box\tw@}}
\makeatother

\title[Defocusing NLS on graphs]{On the defocusing stationary nonlinear Schrödinger equation on metric graphs}

\author[E. Durand-Simonnet]{Élio Durand-Simonnet}
\address{Institut de Mathématiques de Toulouse ;
    UMR5219, Université de Toulouse ;
    CNRS, UPS IMT, F-31062 Toulouse Cedex 9 (France)}
\email{elio.durand\_simonnet@math.univ-toulouse.fr}

\author[D. Galant]{Damien Galant}
\address{(Primary) Department of Mathematics;
Brown University;
Providence, RI 02912 (USA)}
\address{(Secondary) Département de Mathématique;
UMONS, Université de Mons; 7000 Mons (Belgium)}
\email{damien\_galant@brown.edu}

\author[B. Shakarov]{Boris Shakarov}
\address{Institut de Mathématiques de Toulouse ;
    UMR5219, Université de Toulouse ;
    CNRS, UPS IMT, F-31062 Toulouse Cedex 9 (France)}
\email{boris.shakarov@math.univ-toulouse.fr}

\thanks{The work of E.\,D.S. and B.\,S. is partially supported by the ANR project CIMI ANR-11-LABX-0040
and the ANR project NQG ANR-23-CE40-0005.
E.\,D.S. is partially supported by ANR project MINT ANR-18-EURE-0023.
D.\,G. is a Francqui Fellow of the Belgian American Educational Foundation at Brown University
and was a Research Fellow of the F.R.S.-FNRS when this research was initiated.
He would like to thank the Francqui Foundation, the Belgian American Educational Foundation,
the F.R.S.-FNRS and the ANR project NQG ANR-23-CE40-0005 for supporting this research.}

\date{\today}

\subjclass[2020]{35Q55 (35A15, 35B38, 37K45, 35R02)}

\keywords{Nonlinear Schrödinger equation, metric graphs, energy ground states, bifurcation, Lusternik–Schnirelman theory}

\begin{document}

\begin{abstract}
    We study the
    defocusing nonlinear Schr\"odinger equation
    on noncompact metric graphs under general
    self-adjoint vertex conditions
    ensuring the existence
    of a negative eigenvalue of the Hamiltonian operator.
    First, we focus on the existence of energy ground states 
    with prescribed mass. We show that existence and stability
    always hold for small masses and
    fail for large masses
    in the $L^2$-subcritical regime.
    For $\delta$-type vertex conditions, we provide more precise results: ground states exist for all masses in the $L^2$-critical and supercritical cases, while in the subcritical case, for one vertex graphs, there exists a sharp mass threshold such that ground states exist below it and do not exist above it.
    Moreover, we show that the ground state bifurcates from the vanishing solution at the bottom of the Hamiltonian spectrum.
    Finally, we present multiplicity results for stationary solutions,
    both in the fixed-frequency and fixed-mass settings.
\end{abstract}

\maketitle

\section{Introduction}

The study of stationary solutions to nonlinear Schrödinger equations on metric graphs has been a very active area of research in recent decades as highlighted, for instance, in the review \cite{KaNoPe22}. This interest originates from the fact that graphs provide an effective one-dimensional approximation of wave propagation in network-like domains with small transverse dimensions \cite{Ko00, CoSh24, CoSh25Bk}.

Nevertheless, the majority of existing results
treat the case of a focusing nonlinearity.
Much less is known about the case
of a \textit{defocusing} nonlinearity.
In the present work, we pursue a study initiated
in \cite{DuSh26} and consider solutions
of the \textit{defocusing stationary nonlinear Schrödinger equation}
\begin{equation} \label{eqNls}
    \H \psi + \omega \psi + |\psi|^{p-1} \psi = 0.
\end{equation}

Here, the \textit{Hamiltonian operator} $\H$
acts on each edge of the graph $\G$ as the usual Laplacian in space,
endowed with proper boundary conditions on the vertices
(see Section \ref{secPrel} or \cite[Chapter 1]{BeKu13}
for a comprehensive introduction).
The solution $\psi$ in \eqref{eqNls}
is a collection $\psi = (\psi_e)_{e \in  \mathcal E}$
of solutions of the stationary equation 
\begin{equation} \label{eqNls2}
      \omega \psi_e + |\psi_e|^{p-1} \psi_e = \psi_e''
\end{equation}
set on each edge separately, with appropriate boundary conditions on the vertices,
as prescribed by the definition of $\H$.
The quadratic form associated with $\H$ is given by

\begin{equation}\label{eqQuadForm}
    Q_{\H} (\psi) \coloneqq \left\langle \H \psi, \psi \right\rangle_{L^2(\mathcal G)}
\end{equation}
for any $\psi \in \DQH\subseteq H^1(\G)$
and its domain $\DQH$ depends on a choice
of vertex conditions, see \eqref{eqDomainQ} below. Solutions to \eqref{eqNls} are stationary states of the time-dependent nonlinear Schrödinger equation
(see \cite[Proposition 2.4]{DuSh26} for a well-posedness
statement)
\begin{equation} \label{eqTimeDepNLS}
    \begin{cases}
        i \partial_t u = \operatorname{H} u + |u|^{p-1} u, \\
        u(t = 0) = \psi \in \DQH,
    \end{cases}
\end{equation}
for \( u : \mathbb{R} \times \mathcal{G} \to \mathbb{C} \) via the ansatz $u(t,x) = e^{-i\omega t} \psi(x)$. This time dependent equation admits two conserved quantities, which are  the $L^2$ norm of the solution called the \textit{mass} and the \textit{energy} given by
\begin{equation}\label{eqEnIntr}
    E_{\H}(\psi) \coloneqq \frac{1}{2} Q_{\H}(\psi) + \frac{1}{p+1} \| \psi \|_{L^{p+1}(\G)}^{p+1}.
\end{equation}
We consider the following minimization problem:
given any $\mu > 0$, let
\begin{equation} \label{eqEnMin}
    \tau_\mu \coloneqq \inf \left\{ E_{\H}(\psi) \mid \psi \in \DQH ,\, \| \psi \|_{L^2(\G)} = \mu \right\}.
\end{equation}

Every minimizer to \eqref{eqEnMin} satisfies \eqref{eqNls}
with a Lagrange multiplier $\omega \in \R$
and is thus a \textit{normalized solution of mass $\mu$}.
We define the set of all minimizers as
\begin{equation} \label{eqSolSet}
    \mathcal B_\mu \coloneqq \left\{ \psi \in \DQH \mid \| \psi \|_{L^2(\G)} = \mu, \, E_{\H} (\psi) = \tau_\mu \right\}.
\end{equation}
We call elements of $\mathcal B_\mu$
\textit{energy ground states (of mass $\mu$).}
Our first aim is to describe in details the existence
of energy ground states, continuing a study initiated
in \cite{DuSh26}.

As we will see, the bottom of the spectrum of $\H$
will play an important role in our analysis.
More precisely, we define
\begin{equation} \label{eqLGamOm}
    l_{\H} \coloneqq \inf \left\{ Q_{\H}(\psi) \mid \psi \in \DQH, \, \| \psi \|_{L^2(\G)} = 1 \right\}.
\end{equation}
Our first result shows that energy ground states always exist
for small masses independently of the vertex conditions,
as long as $l_{\H} < 0$, that is,
as long as the vertex conditions introduce
a negative contribution to the energy in \eqref{eqEnIntr}.
Moreover, we show that in the $L^2$-subcritical
case $p\in (1,5)$, minimizers do not exist for large masses.
Furthermore, as a consequence of the conservation of mass
and energy of \eqref{eqTimeDepNLS}, we will also prove
the stability of the set $\mathcal B_\mu$
in the following sense. 

\begin{definition}\label{defStab}
    A set $\mathcal A$ is said to be \textit{stable} if for any $\varepsilon > 0$, there exists $\eta > 0$ such that if an initial condition $u_0 \in D(Q_{\H})$ satisfies 
    $$ 
    \inf_{q \in \mathcal{A}} \| u_0 - q \|_{H^1(\mathcal G)} < \eta,
    $$ 
    then the corresponding solution $u$ to \eqref{eqTimeDepNLS} satisfies
\[
    \sup_{t \geq 0} \inf_{q\in \mathcal{A}} \| u(t, \cdot) - q \|_{H^1(\mathcal G)} < \varepsilon.
\] 
\end{definition}

\begin{theorem}\label{thmPSmall}
    Let $\G$ be noncompact.
    Assume that $l_{\H}<0$.
    Then, there exists $\mu_1 >0$ such that $\tau_\mu$
    admits a minimizer for any $\mu \in [0,\mu_1]$.
    Moreover, if $p\in(1,5)$, then there exists
    $\mu_2 \geq \mu_1$ such that $\tau_\mu$
    does not admit any minimizer for $\mu > \mu_2$.
    Finally, if $B_\mu \neq \emptyset$ for some $\mu > 0$,
    then it is stable as in Definition \ref{defStab}.
\end{theorem}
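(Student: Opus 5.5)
The argument has three parts. Existence for small masses comes from a concentration-compactness argument resting on the strict inequality $\tau_\mu < 0$. Nonexistence for large masses comes from comparing with the energy of spread-out functions. Stability is the classical Cazenave--Lions argument, which uses that every minimizing sequence is precompact.

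\emph{Existence for small $\mu$.} I will use the standard structure of $\H$ on a noncompact graph: the essential spectrum is $[0,\infty)$, since on the half-lines $\H$ acts as $-d^2/dx^2$ and the vertex conditions are a finite-rank perturbation. Hence $l_{\H}<0$ is an isolated eigenvalue with an $L^2$-normalized eigenfunction $\varphi_0$, which decays exponentially and so lies in every $L^q$. Testing with $\mu\varphi_0$ gives
\[
\tau_\mu \le \tfrac12 l_{\H}\mu^2 + C\mu^{p+1} < 0
\]
for $\mu$ small. Boundedness from below and coercivity of minimizing sequences follow from the form bound $Q_{\H}(\psi)\ge \tfrac12\|\psi'\|_{L^2}^2 - C\|\psi\|_{L^2}^2$; here the vertex term is controlled by $|\psi(v)|^2\le \epsilon\|\psi'\|^2+C_\epsilon\|\psi\|^2$, and the defocusing term is nonnegative.

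Now take a bounded minimizing sequence $\psi_n \rightharpoonup \psi$ in $H^1$, so $\psi_n\to\psi$ locally uniformly, in particular at the vertices.
\begin{itemize}
\item \emph{Vanishing is excluded.} If $\psi=0$, the vertex contributions of $Q_{\H}$ (and any compact-core part) tend to zero, so $\liminf E_{\H}(\psi_n)\ge 0>\tau_\mu$.
\item \emph{Dichotomy is excluded.} Let $m=\|\psi\|_{L^2}\in(0,\mu]$. Brezis--Lieb splitting gives $\tau_\mu \ge E_{\H}(\psi) + \liminf E_{\H}(\psi_n-\psi)$. The remainder escapes to infinity along the half-lines, so its energy is asymptotically that of the free Laplacian with a defocusing term, which is $\ge 0$. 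Hence $\tau_\mu\ge E_{\H}(\psi)\ge \tau_m$.
\end{itemize}
To conclude that $m=\mu$, I need strict monotonicity $\tau_m>\tau_\mu$ for $m<\mu$. I will obtain it by scaling: for $\theta>1$,
\[
E_{\H}(\theta\psi) = \theta^2 E_{\H}(\psi) + \tfrac{\theta^{p+1}-\theta^2}{p+1}\|\psi\|_{p+1}^{p+1}.
\]
This identity is the delicate point, because with a defocusing nonlinearity the sign is unfavourable. I would write the condition as $\tau_{\theta m}\le \theta^2\tau_m + C(\theta^{p+1}-\theta^2)\|\psi\|_{p+1}^{p+1}$ and use smallness. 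For $\mu\le\mu_1$ small, minimizing sequences satisfy $\|\psi\|_{p+1}^{p+1}\lesssim \mu^{p+1}\ll |\tau_\mu|\sim \mu^2|l_{\H}|$, using $\|\psi\|_\infty^2\lesssim \|\psi\|_{L^2}\|\psi'\|_{L^2}$ and the $H^1$ bound $\|\psi'\|\lesssim\mu$. This yields strict subadditivity in the regime $\mu\le\mu_1$, and that step is the main obstacle.

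With $m=\mu$, strong $L^2$ convergence holds. Weak lower semicontinuity of the kinetic term, convergence of the vertex terms, and Gagliardo--Nirenberg for the $L^{p+1}$ term then give $E_{\H}(\psi)\le\tau_\mu$, so $\psi$ is a minimizer.

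\emph{Nonexistence for large $\mu$ when $p<5$.} First, $\tau_\mu\le 0$ for all $\mu$: take spread-out functions $\mu\,\lambda^{1/2}\chi(\lambda x)$ on one half-line, whose energy tends to $0$ as $\lambda\to 0$. Second, for any $\psi$ of mass $\mu$, the negative part of $Q_{\H}$ is bounded below by $-C|\psi(v)|^2$ summed over vertices, and by $-\|\psi'\|^2$ terms. Combining $|\psi(v)|^{p+1}\lesssim$ the local $L^{p+1}$ norm plus derivative terms, and using that $L^{p+1}$ dominates near the vertex, I would show that $E_{\H}(\psi)>0$ whenever $\mu$ is large, which contradicts $\tau_\mu\le 0$ for a minimizer. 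The role of $p<5$ is this: a would-be minimizer must have $\omega$ related to mass via Pohozaev/Nehari-type identities. On each half-line, solutions of \eqref{eqNls2} decaying at infinity with the defocusing sign are explicit ($\omega>0$ profiles $\propto \omega^{1/(p-1)}\sinh^{-2/(p-1)}$ or $\cosh$-type). Their mass is explicit and bounded in terms of the vertex data as $\omega$ varies precisely when $p<5$. I will therefore classify possible minimizers through these explicit half-line profiles, deduce an a priori upper bound $\mu_2$ on the mass, and conclude nonexistence above it.

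\emph{Stability.} This is Cazenave--Lions. Suppose it fails. Then there are initial data $u_{0,n}\to\mathcal B_\mu$ in $H^1$ and times $t_n$ with $\operatorname{dist}(u_n(t_n),\mathcal B_\mu)\ge\varepsilon$. By conservation of mass and energy, $u_n(t_n)$, renormalized to mass $\mu$, is a minimizing sequence. Precompactness of minimizing sequences, established above for the masses where $\mathcal B_\mu\neq\emptyset$ via the same vanishing/dichotomy exclusion, gives a subsequence converging in $H^1$ to an element of $\mathcal B_\mu$, which is a contradiction.
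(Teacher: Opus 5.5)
Your small-mass argument is sound, and it differs from the paper's. Proposition~\ref{prpMuOne} argues by contradiction: a minimizer at the left end of a plateau of $\mu\mapsto\tau_\mu$ has $\omega\le 0$, which clashes with the Nehari identity and with $\tau_\nu\le\frac12 l_{\H}\nu^2+C\nu^{p+1}$. You instead get strict monotonicity on $(0,\mu_1]$ directly from scaling. That works: $\tau_m\le -cm^2$, near-minimizers satisfy $\|\psi\|_{L^{p+1}}^{p+1}\lesssim m^{p+1}$ by G\aa rding, and $\theta^2(1-\theta^{1-p})/(\theta^2-1)$ is bounded on $\theta>1$.

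The genuine gap is the nonexistence for large masses. Your first plan, showing $E_{\H}(\psi)>0$ for every $\psi$ of large mass, cannot succeed. By Lemma~\ref{lemContMon}, $\tau_\mu<0$ for every $\mu>0$; concretely, add to a negative-energy function a far, spread-out bump of arbitrary mass on a half-line. Your second plan omits its key input. The half-line profiles have mass controlled only through their value at the vertex: for $\omega=0$, $\psi_e\propto(x+b)^{-2/(p-1)}$ has mass of order $b^{1-4/(p-1)}$, which blows up as $b\to 0$. Nothing in your sketch bounds the vertex data of a minimizer uniformly in $\mu$, so no $\mu_2$ follows. (Also, there are no decaying $\cosh$-type profiles in the defocusing case.)

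The paper gets this control from the global problem. For $p\in(1,5)$, the minimizer of $E_{\H}$ over $\dot H^1\cap L^{p+1}$ has half-line tails $0$ or $(x+b)^{-2/(p-1)}\in L^2$, so it lies in $\DQH$ (Corollary~\ref{corFnlPlt1}). Hence $\tau_\mu=\tau_{\min}$ for $\mu\ge\mu_{\min}$, and every minimizer of large mass is a global minimizer with $\omega=0$. Its mass on $\mathcal K$ is bounded by the energy (Remark~\ref{rmkMConcentr}). A diverging half-line mass would force $b_n\to 0$, hence $\|\psi_n\|_{L^2(0,\Llow)}\to\infty$ (Proposition~\ref{prpVertCont}).

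You can also close your own route without the global minimizer, by running your vertex estimate in the opposite direction. As in Lemma~\ref{lemMinGlob}, $E_{\H}(\psi)\le 0$ bounds $\|\psi'\|_{L^2(\G)}$ and $\|\psi\|_{L^{p+1}(\G)}$. This in turn bounds $\|\psi\|_{L^\infty}$, all vertex values $|\psi_e(v)|\le H$, and the mass on $\mathcal K$, uniformly in $\mu$. Since $\tau_\mu<0$, this applies to every minimizer. If $\omega<0$, the half-line components vanish (Proposition~\ref{prpHLSol}). If $\omega\ge 0$, a maximum-principle comparison gives $|\psi_e|\le\phi_{0,H}$, the $\omega=0$ profile with vertex value $H$, which is in $L^2$ exactly when $p<5$.

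Finally, your stability step relies on precompactness of minimizing sequences. Your argument yields this only where $\tau_m>\tau_\mu$ for all $m<\mu$, which you proved for $\mu\le\mu_1$, not for every $\mu$ with $\mathcal B_\mu\neq\emptyset$. If $\tau$ is constant to the left of such a $\mu$, a smaller-mass minimizer plus a spreading bump is a non-precompact minimizing sequence. The paper only cites Cazenave--Lions at this point.
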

The nonexistence of ground states for large masses is closely related to the existence of a global, \textit{unconstrained} minimizer of the energy. More precisely, in Proposition~\ref{prpVertCont} below, we establish a slightly stronger statement: whenever a global minimizer exists, the nonexistence of ground states for large masses follows. The conclusion of Theorem~\ref{thmPSmall} then relies on the fact that for $p \in (1,5)$ a global minimizer always exists, whereas for $p \geq 5$ the situation remains unclear when the compact core of the graph is nontrivial, see Proposition \ref{prpOmegaZero} below.

The question of (non)existence of energy ground states for $\mu_1 < \mu < \mu_2$ is more subtle due to the generality of the vertex conditions. For instance, the general nature of the vertex conditions
makes it difficult to establish the positivity of ground states.
As we shall see, closely related to this issue,
the sign of the frequency $\omega$ associated with energy ground states also plays a crucial role in our analysis (see, e.g., Lemma~\ref{lemPosMult}). 
Related to this, the special case of $\delta$-type conditions at vertices is worth considering specifically
(see Section~\ref{sec:delta} for definitions).
This type of vertex condition has received a lot of attention in the study of the focusing nonlinear Schrödinger equation on graphs,
as it models \textit{point defects}
\cite{AdCaFiNo14,AdCaFiNo16En2, AlBrD95,AlbGeHo05}.
In this case, we show the following
refined version of Theorem~\ref{thmPSmall}.
\begin{theorem}\label{thmDelta}
    Let $\G$ be noncompact.
    Assume $\H$ is defined with only $\delta$-type conditions
    at vertices and that $l_{\H}<0$.
    Then, any minimizer to $\tau_\mu$
    is positive up to a phase shift.
    Moreover, the following holds: 
    \begin{enumerate}
        \item If $p \geq 5$, then
        for any $\mu \in (0,\infty)$, $\tau_\mu$
        admits a minimizer.
        \item If $p \in(1, 5)$ and $\G$
        has only one vertex,
        then there exists $\mu_1 >0$
        such that $\tau_\mu$ admits a minimizer
        for any $\mu \in [0,\mu_1]$ and does not admit
        any minimizers for $\mu > \mu_1$. 
    \end{enumerate}
    Finally, for $\mu >0$ sufficiently small,
    the ground state bifurcates from the linear one
    (in the sense of Proposition \ref{prpBifurcation}).
\end{theorem}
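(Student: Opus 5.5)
Throughout, $\delta$-type conditions mean $\psi$ continuous at each vertex $v$ and $Q_{\H}(\psi)=\|\psi'\|_{L^2}^2+\sum_v \alpha_v|\psi(v)|^2$. The approach combines a rearrangement argument for positivity with a monotonicity analysis of $\mu\mapsto\tau_\mu$ for existence.

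\emph{Positivity.} Let $\psi$ be a minimizer. Then $|\psi|$ is still continuous, $\||\psi|'\|_{L^2}\le\|\psi'\|_{L^2}$, and the vertex and nonlinear terms are unchanged. Hence $|\psi|$ is also a minimizer, and $|\psi'|=||\psi|'|$ a.e. This forces $\psi=e^{i\theta}|\psi|$ with $\theta$ constant on each edge, and constant globally by continuity at the vertices. The function $|\psi|\ge0$ solves \eqref{eqNls2} on each edge. If it vanished at an interior point, ODE uniqueness would give $|\psi|\equiv0$ on that edge. If it vanished at a vertex, the $\delta$-condition $\sum \partial_e\psi(v)=\alpha_v\psi(v)=0$ together with nonnegativity forces all outgoing derivatives to vanish, so again $|\psi|\equiv0$ nearby. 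Propagating through the connected graph, $\psi\equiv0$, which contradicts $\mu>0$. So the minimizer is positive up to a phase.

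\emph{Existence.} I use the subadditivity strategy. Write $E_0$ for the energy without vertex terms, whose infimum at fixed mass on the noncompact graph is $0$ in the defocusing case. Existence of a minimizer follows from compactness of minimizing sequences, provided $\tau_\mu<0$ and $\tau_\mu<\tau_m$ for $m<\mu$ (strict: no mass escapes to infinity, where the energy density is asymptotically nonnegative).

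For $p\ge5$, I first show $\tau_\mu<0$ for every $\mu$. Take the linear ground state $\phi$ with $Q_{\H}(\phi)=l_{\H}\|\phi\|^2<0$ (it exists since $l_{\H}<0$ and the essential spectrum is $[0,\infty)$). Since $\phi$ decays exponentially, I take $\phi_\lambda=\sqrt\lambda\,\phi(\lambda\,\cdot)$ on the halflines and adjust on the compact core. Scaling with $\lambda\to0$ shows the negative vertex term dominates, because the $L^{p+1}$ term scales like $\lambda^{(p-1)/2}\ge\lambda^2$. I expect this to be the main obstacle: the core has fixed length, so the scaling is not exact. The first thing I would try is a direct test function: constant $c$ on the core and $c\,e^{-\lambda x}$ on the halflines. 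With mass $\approx c^2/\lambda$, the energy is $\approx c^2(\lambda+\sum\alpha_v)/2+O(c^{p+1}/\lambda)$. This is negative for large mass, since $\sum\alpha_v<0$ is forced by $l_{\H}<0$ after testing with constants. Strict subadditivity then follows from the dilation argument $\tau_{\theta\mu}<\theta^2\tau_\mu$ for $\theta>1$, which holds because the nonlinear term has higher homogeneity. Compactness follows by concentration-compactness in the style of the standard graph literature, and I assume it is established earlier.

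For $p<5$, one vertex (a star graph with halflines and possibly loops or pendants), Theorem~\ref{thmPSmall} gives existence for small $\mu$ and nonexistence for large $\mu$. Sharpness requires showing that the existence set is an interval $[0,\mu_1]$. I would characterize existence by $\tau_\mu<0$, or equivalently by the Lagrange multiplier $\omega>0$, and show that $\tau_\mu$ is nonincreasing. Explicit positive solutions then come from the ODE: on halflines they are the defocusing profiles with $\omega>0$, and on the single vertex the $\delta$-condition gives one scalar equation in $\omega$. I would show the mass is monotone in $\omega$, so there is a single branch, ending at $\omega=0$ with mass $\mu_1$, which is the global minimizer of Proposition~\ref{prpOmegaZero}. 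Uniqueness of positive solutions at each mass then yields the threshold.

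\emph{Bifurcation.} Given positivity, I would apply Proposition~\ref{prpBifurcation} directly, since the bottom eigenvalue is simple (its eigenfunction is positive). This amounts to a Crandall–Rabinowitz or Lyapunov–Schmidt argument at $\omega=-l_{\H}$, using that the ground states for small $\mu$ satisfy $\omega\to -l_{\H}$ and $\psi/\mu\to\phi/\|\phi\|$.
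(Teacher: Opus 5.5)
Your positivity argument and your treatment of the bifurcation essentially match the paper (Lemma \ref{lemPos}, Proposition \ref{prpBifurcation}). Both existence parts, however, have genuine gaps.

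\textbf{The case $p\ge5$.} The dilation inequality $\tau_{\theta\mu}<\theta^2\tau_\mu$ is a tool from the focusing case, and it is false here. For $\theta>1$,
\[
E_{\H}(\theta\psi)=\theta^2E_{\H}(\psi)+\frac{\theta^{p+1}-\theta^2}{p+1}\|\psi\|_{L^{p+1}(\G)}^{p+1}>\theta^2E_{\H}(\psi),
\]
so the higher homogeneity of the \emph{positive} nonlinear term works against you, and no strict subadditivity follows. Your argument also never genuinely uses $p\ge5$. If it were valid, it would make $\mu\mapsto\tau_\mu$ strictly decreasing for every $p$. That contradicts nonexistence for large masses when $p\in(1,5)$ (Theorem \ref{thmPSmall}), where $\tau_\mu$ is eventually constant.

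A separate, harmless error: $l_{\H}<0$ does not force $\sum_v\alpha_v<0$. Take two far-apart vertices, one strongly attractive and one strongly repulsive. In any case, $\tau_\mu<0$ for all $\mu$ is already Lemma \ref{lemContMon}.

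The missing idea is a plateau argument.
\begin{itemize}
\item Suppose $\tau_{\hat\mu}$ is not attained, and let $\bar\mu$ be the left endpoint of the interval on which $\tau_\mu=\tau_{\hat\mu}$.
\item Then $\tau_{\bar\mu}$ is attained (Lemma \ref{lemExistMin}), and the minimizer has multiplier $\omega\le0$ (Corollary \ref{rmkOmegaZero}).
\item For $p\ge5$, Proposition \ref{prpHLSol} forces this minimizer to vanish identically on every half-line, contradicting positivity.
\end{itemize}
This is exactly where $p\ge5$ enters: there is no nontrivial $L^2$ half-line profile with $\omega\le0$.

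\textbf{The case $p\in(1,5)$ with one vertex.} Characterizing existence by $\tau_\mu<0$ cannot be right, since $\tau_\mu<0$ for every $\mu>0$ while minimizers fail to exist for large $\mu$. The steps that would carry your proof are only asserted: monotonicity of the mass along a single branch of positive solutions, and uniqueness of positive solutions at each mass. These are the whole difficulty; on loops the edge profiles are not explicit, and no monotonicity of the mass in $\omega$ is evident. Also, for $p<5$ the global minimizer comes from Corollary \ref{corFnlPlt1}, not Proposition \ref{prpOmegaZero}, which concerns $p\ge5$.

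The paper needs no branch analysis. It uses only uniqueness of positive solutions at the fixed frequency $\omega=0$, obtained by comparison (Lemma \ref{lemMinMax}).
\begin{itemize}
\item An extremum of $u-\hat u$ in the interior of an edge forces $u\le\hat u$ or $u\ge\hat u$. Then $\int_{\G}u\hat u(u^{p-1}-\hat u^{p-1})\,dx=0$ gives $u\equiv\hat u$.
\item With a single vertex, the remaining case (both extrema at the vertex) also forces $u\equiv\hat u$.
\end{itemize}
The threshold then follows in two steps.
\begin{itemize}
\item Suppose existence stopped at some $\mu_1<\mu_{\operatorname{min}}$. The minimizer at the left end of the ensuing plateau has $\omega\le0$, hence $\omega=0$ by positivity and Proposition \ref{prpHLSol}. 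It must then coincide with the positive global minimizer of mass $\mu_{\operatorname{min}}$, a contradiction.
\item Any minimizer of mass $\mu>\mu_{\operatorname{min}}$ is a global minimizer of the energy. So $\omega=0$, and the same contradiction follows.
\end{itemize}
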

The key improvement that allows us to obtain the stronger result of Theorem~\ref{thmDelta} is the continuity at the vertices. This property can be exploited to establish positivity of ground states (up to a complex phase shift). Combined with the nonexistence of nontrivial $H^1$ solutions on the half-lines when $p \geq 5$, this yields the first statement of Theorem~\ref{thmDelta}. The second statement follows instead from the uniqueness of solutions for a fixed frequency $\omega \geq 0$ on graphs with a single vertex, see Lemma~\ref{lemMinMax} below. Finally, positivity is also exploited to establish a Lyapunov-Schmidt reduction, showing that a branch of stationary states bifurcates from the smallest eigenvalue of $\H$ in the direction of the linear ground state. 
 
In general, as it has been noticed in \cite[Remark $3.11$]{DuSh26}, the question of (non)existence of energy ground states for intermediate masses is related to the uniqueness of the solution to \eqref{eqNls}. This is one motivation for the second part of this work. 
 
Here we will also consider a second functional whose critical points are solutions to \eqref{eqNls}. Instead of prescribing the masses of solutions, one may rather prescribe their frequency~$\omega$. More precisely, given $\omega \in \R$,
the \textit{action functional}
$S_{\omega}: \DQH \to \R$ is defined by
\begin{equation}\label{eqAcIntr}
    S_{\omega}(\psi)
    \coloneqq E_{\H}(\psi) + \frac{\omega}{2} \| \psi \|_{L^2(\G)}^2.
\end{equation}
We note that the solutions in $\DQH$
of \eqref{eqNls} correspond exactly to critical points
of $S_{\omega}$.
Moreover, nonzero global minimizers of $S_{\omega}$ are called
\textit{action ground states}.
In \cite{DuSh26}, it was shown that action ground states
exist if and only if $l_H < 0$ and $\omega \in [0, -l_H)$.

Considering the aforementioned result
alongside Theorem~\ref{thmPSmall} gives an extensive existence
theory for energy/action ground states.
Let us now turn our attention to multiplicity results,
either for solutions with a fixed frequency $\omega$
or for normalized solutions with a fixed mass $\mu$.

In analogy with the condition $l_{\H} < 0$ used to ensure the existence of ground states, we will require the presence of multiple negative eigenvalues\footnote{As will be shown
in Section~\ref{secPrel}, all negative elements of the spectrum of $\H$ are eigenvalues.}
in order to derive our multiplicity results.
 
As already pointed out in \cite{DuSh26}, due to the focusing nature of the nonlinearity, there are no compactness issues for the action
functional, which satisfies the Palais-Smale condition
(see Proposition~\ref{PSaction}).
We thus obtain the following theorem
by applying Lusternik-Schnirelmann theory to $S_{\omega}$.

\begin{theorem}\label{thmMultAction}
    Let $\G$ be noncompact.
    Assume that $\H$ has $k$ negative eigenvalues
    $\lambda_1 \le \cdots \le \lambda_k < 0$
    (eigenvalues having multiplicity being repeated).
    Then, for all $\omega \in (0, -\lambda_k)$,
    the problem \eqref{eqNls} admits at least $k$
    distinct $S^1$-orbits of solutions
    (where the orbit of $u$
    is $\{e^{i\theta}u \mid \theta \in [0, 2\pi)\}$)
    in $\DQH$, all having a negative action level.
    
    Furthermore, if the matrices $\Lambda_v$ defined in Proposition \ref{prpHSelfAdj}
    are real symmetric for all $v \in \mathcal{V}$, then these solutions can be chosen to be real-valued,
    yielding at least $k$ distinct pairs
    $\pm \psi_1, \cdots, \pm \psi_k$.
\end{theorem}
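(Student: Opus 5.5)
We apply an equivariant Lusternik--Schnirelmann argument to the even functional $S_\omega$ on $\DQH$. For the real case the symmetry is $\mathbb Z_2$ and we use Krasnoselskii genus; in the complex case the symmetry is $S^1$ and we use the $S^1$-index (Fadell--Rabinowitz, or the Benci pseudo-index). The minimax levels are
\[
c_j \coloneqq \inf_{A \in \Sigma_j} \sup_{\psi\in A} S_\omega(\psi), \qquad j=1,\dots,k,
\]
where $\Sigma_j$ denotes the family of closed invariant subsets of $\DQH\setminus\{0\}$ with index at least $j$ (genus $\ge j$ in the real setting). The argument then checks the three standard ingredients: $S_\omega$ is bounded from below, it satisfies the Palais--Smale condition, and each $\Sigma_j$ contains a set on which $S_\omega<0$. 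Given these, $-\infty<c_1\le\dots\le c_k<0$. Each $c_j$ is a critical value by the equivariant deformation lemma. If $c_j=c_{j+1}=\dots=c_{j+m}$, the critical set at that level has index at least $m+1$. In the $S^1$ case this forces infinitely many orbits, because a single free $S^1$-orbit has index $1$. Since $S_\omega(0)=0$, none of these critical points is the trivial solution. This gives $k$ distinct orbits with negative action.

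\textbf{Boundedness and compactness.} Write $Q_{\H}(\psi)=\|\psi'\|_{L^2}^2+\text{(vertex terms)}$. The vertex terms are bounded by $C\sum_v|\psi(v)|^2$, and $|\psi(v)|^2\le \varepsilon\|\psi'\|^2_{L^2}+C_\varepsilon\|\psi\|^2_{L^2}$. Since $\omega>0$ and $p>1$, we can absorb the $L^2$ part: $\frac{\omega+C_\varepsilon}{2}|\psi|^2 \le \frac1{p+1}|\psi|^{p+1}+C$ pointwise, but only on the compact core, where we use the trace estimate localized to edges near the vertices. On the half-lines, $\omega>0$ gives positivity directly. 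This yields coercivity of the form
\[
S_\omega(\psi)\ge c\|\psi\|_{H^1}^2 - C.
\]
It shows $S_\omega$ is bounded below and that Palais--Smale sequences are bounded. The Palais--Smale condition itself I take from Proposition~\ref{PSaction}.

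\textbf{Sets of negative action.} Let $\phi_1,\dots,\phi_k$ be orthonormal eigenfunctions for $\lambda_1,\dots,\lambda_k$; these lie in $\DQH$, and in $H^1\cap L^\infty$. Let $E_k=\lspan\{\phi_1,\dots,\phi_k\}$, taken complex or real as appropriate. For $\psi\in E_k$ with $\|\psi\|_{L^2}=r$,
\[
S_\omega(\psi)\le \tfrac12(\lambda_k+\omega)r^2+\tfrac1{p+1}\|\psi\|_{L^{p+1}}^{p+1}\le \tfrac12(\lambda_k+\omega)r^2+Cr^{p+1},
\]
because all norms are equivalent on the finite-dimensional space $E_k$. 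As $\omega<-\lambda_k$, this is negative for $r$ small. The sphere $\{\psi\in E_k:\|\psi\|_{L^2}=r\}$ is a $(2k-1)$-sphere with the free $S^1$-action, so its $S^1$-index is $k$. In the real case it is a $(k-1)$-sphere with genus $k$. Hence $c_k<0$.

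\textbf{Real solutions.} If every $\Lambda_v$ is real symmetric, the real subspace of $\DQH$ is invariant under $\H$, and $S_\omega$ restricted to it is even. By the principle of symmetric criticality (or simply because $\H$ commutes with conjugation), a critical point of the restriction is a critical point on the whole space. The eigenfunctions can be chosen real. Running the genus argument on the real space then gives $k$ pairs $\pm\psi_j$.

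The main obstacle is the coercivity and Palais--Smale step in the presence of general vertex conditions. Proposition~\ref{PSaction} is assumed to cover Palais--Smale; the delicate part there is controlling the negative vertex contributions by the trace inequality on the noncompact graph. The remaining care is in the equivariant bookkeeping that turns coinciding levels into infinitely many orbits.
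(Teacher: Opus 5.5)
Your proposal is correct and follows essentially the same proof as the paper. The shared steps are the Lusternik--Schnirelmann minimax over classes of genus at least $j$ (real case) or $S^1$-index at least $j$ (complex case), finiteness of the levels from the lower bound on $S_\omega$, negativity via a small sphere in the span of the negative eigenfunctions, criticality of the levels from Proposition~\ref{PSaction}, and the index of the critical set to handle coinciding levels. Your direct coercivity bound (the paper instead uses Lemma~\ref{lemMinGlob} and Proposition~\ref{prpBddEnergy}) and your symmetric-criticality remark for the real case are minor additions that the paper leaves implicit.
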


Finally, we obtain a multiplicity result
for fixed mass solutions in the small mass regime.
\begin{theorem}\label{thmMultNormalized}
    Let $\G$ be noncompact.
    Assume that $\H$ has $k$ negative eigenvalues
    $\lambda_1 \le \cdots \le \lambda_k < 0$
    (eigenvalues having multiplicity being repeated).
    Then, there exists $\tilde{\mu} > 0$
    such that, for all $\mu \in (0, \tilde{\mu})$,
    problem \eqref{eqNls} admits at least $k$
    distinct $S^1$-orbits of solutions in $\DQH$,
    all having negative energy and mass $\mu$.
    
    Furthermore, if the matrices $\Lambda_v$ defined in Proposition \ref{prpHSelfAdj}
    are real symmetric for all $v \in \mathcal{V}$, then these solutions can be chosen to be real-valued,
    yielding at least $k$ distinct pairs
    $\pm \psi_1, \cdots, \pm \psi_k$.
\end{theorem}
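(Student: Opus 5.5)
We prove Theorem~\ref{thmMultNormalized} by Lusternik--Schnirelmann theory for the energy restricted to the mass sphere
\[
\mathcal S_\mu \coloneqq \{\psi \in \DQH \mid \|\psi\|_{L^2(\G)} = \mu\},
\]
working at negative energy levels. The key point is that below energy zero, Palais--Smale sequences are compact for small $\mu$.

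\emph{Step 1: genus at negative levels.} Let $\phi_1,\dots,\phi_k$ be $L^2$-orthonormal eigenfunctions for $\lambda_1,\dots,\lambda_k$, and set $V_k \coloneqq \lspan\{\phi_1,\dots,\phi_k\}$. Negative eigenfunctions decay exponentially on the half-lines, so $V_k\subset \DQH\cap L^\infty$. On the finite-dimensional space $V_k$ all norms are equivalent, which gives, for $\psi\in V_k\cap\mathcal S_\mu$,
\[
E_{\H}(\psi)\le \tfrac12\lambda_k\mu^2 + C\mu^{p+1}.
\]
For $\mu$ small this is $<\tfrac14\lambda_k\mu^2<0$. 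The set $V_k\cap\mathcal S_\mu$ is a $(2k-1)$-sphere that is $S^1$-invariant and carries a free action. Hence its $S^1$-index (equivalently, the Krasnoselskii genus of its quotient, or the cohomological index) is at least $k$, and $\Z_2$-genus $k$ in the real case. Define the minimax levels
\[
c_j \coloneqq \inf_{A\in\Gamma_j}\ \sup_{A} E_{\H},\qquad j=1,\dots,k,
\]
where $\Gamma_j$ is the family of closed $S^1$-invariant subsets of $\mathcal S_\mu$ of index $\ge j$. Then $\tau_\mu\le c_1\le\dots\le c_k<0$, and $\tau_\mu>-\infty$ because $Q_{\H}\ge l_{\H}\|\cdot\|^2_{L^2}$.

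\emph{Step 2: Palais--Smale condition at negative levels (main obstacle).} Take $(\psi_n)\subset\mathcal S_\mu$ with $E_{\H}(\psi_n)\to c<0$ and $E_{\H}|_{\mathcal S_\mu}'(\psi_n)\to0$, with Lagrange multipliers $\omega_n$.
\begin{itemize}
\item[(a)] Boundedness in $H^1$ holds because the nonlinear term is nonnegative and $Q_{\H}$ controls $\|\psi'\|^2$ up to a multiple of the mass; this uses the standard form bound from Section~\ref{secPrel}.
\item[(b)] The multipliers are bounded, since $\omega_n\mu^2 = -Q_{\H}(\psi_n)-\|\psi_n\|^{p+1}_{p+1}+o(1)$. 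Hence $\omega_n\to\omega$ along a subsequence.
\item[(c)] We show $\omega>0$. We have $\omega\mu^2 = -2c - \tfrac{p-1}{p+1}\lim\|\psi_n\|_{p+1}^{p+1}$. For small $\mu$, Gagliardo--Nirenberg together with $\|\psi_n\|^2_{H^1}\lesssim \mu^2$ gives $\|\psi_n\|_{p+1}^{p+1}=O(\mu^{p+1})$. On the other hand, $c\le c_k\le\tfrac14\lambda_k\mu^2$. It follows that $\omega\ge -\tfrac12\lambda_k - O(\mu^{p-1})>0$.
\end{itemize}
The estimate $\|\psi_n\|_{H^1}^2\lesssim\mu^2$ requires care. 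We obtain it from $E_{\H}(\psi_n)<0$: this gives $\|\psi_n'\|^2\le C\mu^2$, provided the vertex contribution to $Q_{\H}$ is bounded by $\varepsilon\|\psi'\|^2+C_\varepsilon\|\psi\|^2$, which we take from Section~\ref{secPrel}. This is where smallness of $\mu$, through the choice of $\tilde\mu$, enters.

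With $\omega>0$, compactness follows. Write $\psi_n\rightharpoonup\psi$ weakly in $H^1$ and set $r_n=\psi_n-\psi$. The vertex terms are compact, since they involve finitely many point values, and local $L^2$ convergence holds on the compact core. Testing the equation with $r_n$ yields
\[
\|r_n'\|^2+\omega\|r_n\|^2+\int|\psi_n|^{p-1}\psi_n\bar r_n\to 0 .
\]
The nonlinear term is asymptotically nonnegative by a Brezis--Lieb-type splitting and monotonicity of $s\mapsto|s|^{p-1}s$. Therefore $r_n\to0$ in $H^1$. This is exactly the mechanism of Proposition~\ref{PSaction} for the action with $\omega>0$, and we apply that result to $S_{\omega}$ with the limit multiplier.

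\emph{Step 3: conclusion.} By the equivariant deformation lemma on the $C^1$ Hilbert manifold $\mathcal S_\mu$ (in the $H^1$ norm induced by $Q_{\H}+C$), each $c_j$ is a critical value. If $c_j=c_{j+m}$, the critical set at that level has index $\ge m+1$ and therefore contains infinitely many orbits. In either case we obtain at least $k$ distinct $S^1$-orbits at negative energy. When the matrices $\Lambda_v$ are real symmetric, we run the same argument on the real subspace: $V_k$ can be taken real, the $\Z_2$-genus replaces the $S^1$-index, and we obtain $k$ pairs $\pm\psi_j$.
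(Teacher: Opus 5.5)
Your proposal is correct. Its overall architecture matches the paper's: Lusternik--Schnirelmann minimax over sets of genus or $S^1$-index $\ge j$ on the mass sphere, negativity of the $c_j$ from the sphere in $\lspan\{\phi_1,\dots,\phi_k\}$, and positivity of the limiting multiplier as the key to compactness. The route differs in the two technical steps.

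\textbf{Sign of the multiplier.} The paper proves the two-sided estimate $|c_j(\mu)-\tfrac12\lambda_j\mu^2|\le\frac{\Gamma^{p+1}}{p+1}\mu^{p+1}$, using $V_j$ for the upper bound and the intersection lemma for the lower bound. From this it deduces $|\omega_n+\lambda_j|\le\Gamma^{p+1}\mu^{p-1}$, which localizes each multiplier near $-\lambda_j$ and gives an explicit $\tilde\mu$. You use only the one-sided bound $c_j\le c_k\le\tfrac14\lambda_k\mu^2$. You combine it with an a priori estimate $\|\psi_n\|_{L^{p+1}}^{p+1}=O(\mu^{p+1})$: from $E_{\H}(\psi_n)<0$ you get $Q_{\H}(\psi_n)<0$, then $\|\psi_n\|_{H^1}^2\lesssim\mu^2$ by the vertex-term (G{\aa}rding) bound, then Gagliardo--Nirenberg.

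This difference matters. The paper bounds $\|\psi_n\|_{L^{p+1}}^{p+1}$ by $\Gamma^{p+1}\mu^{p+1}$ by invoking Lemma~\ref{lemEquivNorms}. That lemma holds only on $E_-$, whereas $\psi_n$ is an arbitrary point of $\MM_\mu$. Your estimate holds for every element of the mass sphere with negative energy, so it is what actually justifies this step. Combined with the paper's two-sided bound on $c_j$, it also recovers the localization $|\omega_n+\lambda_j|\lesssim\mu^{p-1}$, with a less explicit $\tilde\mu$.

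\textbf{Compactness.} The paper proves Proposition~\ref{propPSsignMultipliers} by a tightness argument. It tests against a cutoff $\theta_R\psi_n$ on each half-line to get strong $L^2$ convergence, then upgrades to $H^1$ through convergence of $Q_{\H}$. You instead test against $r_n=\psi_n-\psi$. You use that the vertex terms are finite-rank, hence compact, and that $s\mapsto|s|^{p-1}s$ is monotone, which gives $\|r_n'\|^2+\omega\|r_n\|^2\le o(1)$. No Brezis--Lieb splitting is needed here: $\langle|\psi|^{p-1}\psi,r_n\rangle\to0$ by weak convergence alone. Equivalently, you note that $(\psi_n)$ is a Palais--Smale sequence for $S_\omega$ and invoke Proposition~\ref{PSaction}. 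Your route is shorter. The paper's has the merit of being a standalone compactness criterion.

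\textbf{Minor point.} Your argument establishes the Palais--Smale condition only at levels $c\le c_k$, not at all negative levels as your heading suggests. This is enough, since only the levels $c_j$ are used in the deformation argument.
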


This time, the compactness is more delicate
and requires an analysis of the sign of the Lagrange multipliers
corresponding to Palais-Smale sequences,
as was the case for the existence theory of energy ground states,  see Proposition~\ref{propPSsignMultipliers} below.
One can then obtain our multiplicity result
using Lusternik-Schnirelmann theory on the energy functional
constrained to the manifold of functions having mass $\mu$.

To conclude the introduction, let us compare our results with the existing literature. As mentioned earlier, the literature for the focusing case (with the opposite sign in front of the nonlinearity in \eqref{eqNls}) is quite vast, and we do not aim to cover it here. However, the comparison between action and energy ground states is particularly interesting. Indeed, in the focusing case, energy ground states are action ground states (see \cite[Theorem~1.3]{dovetta_serra_tilli_2023}), while the converse may not be true \cite{DDGS,dovetta_serra_tilli_2023}. In our defocusing setting, by contrast, all action ground states are energy ground states, see \cite[Theorem~1.3]{DuSh26}, while the converse may fail, see \cite[Theorem~1.4]{DuSh26}.

Moreover, the inability to reach all masses in the energy minimization has different origins in the focusing and defocusing cases. In the focusing case, due to the convexity of the energy, a minimizing sequence may lose compactness primarily because of the \textit{run-away} scenario in the concentration-compactness theorem adapted to graphs \cite[Section~3]{CaFiNo17}; that is, when a line soliton is energetically favored over a state concentrated on the core
of the graph (see e.g.\,\cite{AdCaFiNo14}). In our case, this situation does not occur since there are no nontrivial line solitons; instead, the loss of compactness of the minimizing sequence can only occur via the \textit{dichotomy} scenario.

Finally, we note that the situation considered here is reminiscent of the case of competing nonlinearities \cite{AdBoDo22Nl, BoDo22Nl, SoVi26GSE}
or of nonlinear point interactions \cite{BaBoDoTe25NPI},
in the sense that some attractive phenomenon counterbalances the repulsive effect of one nonlinearity. A similar scenario can also arise in the presence of a negative linear potential; see, e.g., \cite{AdGaSp25Pot, CaFiNo17}.

The paper is organized as follows.
In Section~\ref{secPrel}, we precise our functional
setup and present several useful inequalities and lemmas.
In Section~\ref{secEGS}, we develop our existence theory
for energy ground states of a given mass.
We begin with the general setting,
proving Theorem~\ref{thmPSmall}, then focus on the special case
of $\delta$-type conditions at vertices.
 Then in the last part of this section, we show our bifurcation result, concluding the proof of Theorem~\ref{thmDelta}. 
Finally, Section~\ref{secMult} is devoted to our multiplicity
results using Lusternik-Schnirelmann theory. We begin with the
action case (Theorem~\ref{thmMultAction}),
and then prove the more delicate
energy case (Theorem~\ref{thmMultNormalized})
using a dedicated compactness argument
(Proposition~\ref{propPSsignMultipliers}).

\section{Preliminaries}\label{secPrel}

\subsection{Metric graphs}

In this section, we recall the basic theory of Laplace operators on a metric graph $\G$.
A metric graph is a $3$-tuple $\G = (\mathcal V, \mathcal{E}_{\operatorname{fin}}, \mathcal{E}_{\operatorname{inf}})$, where $V$ is the set of vertices, $\mathcal{E}_{\operatorname{fin}}$ the set of finite length edges, and $\mathcal{E}_{\operatorname{inf}}$ the set of infinite length edges.
Elements of $\mathcal{E} = \mathcal{E}_{\operatorname{fin}} \cup \mathcal{E}_{\operatorname{inf}}$ will be referred to as edges.

Throughout the paper, we assume that $\G$ is connected, meaning that for any two vertices 
$v, v' \in \mathcal V$ there exists a finite sequence $(v_1 = v, v_2, \dots, v_n = v')$ 
such that each pair of consecutive vertices is adjacent. 
For $v \in \mathcal V$, we denote by $J_v$ the set of edges adjacent to $v$, meaning that the corresponding edge can be identified with an interval starting from $v$. We denote by $d_v$ the cardinality or \textit{degree} of the set $J_v$. As $\G$ is connected, every vertex has a nonzero degree.
Throughout the paper, the sets $\mathcal E$ and $\mathcal V$ are assumed to be of finite cardinality. A graph is said to be compact if $\mathcal{E}_{\operatorname{inf}} = \emptyset$.  
An edge $e \in \mathcal E$ has a length $L_e$ that can be finite or infinite. The coordinate system on the metric graph is largely arbitrary. When the length $L_e$ is finite,
an edge $e \in \mathcal E$ can be identified with an interval $I_e \subset \R$ of length $L_e$, such as $I_e = [0, L_e]$.
When the length $L_e$ is infinite, the edge can be identified with any half-line $I_e = [a,\infty)$,
where $a \in \R$ can be chosen freely. Let $\Llow$ be defined as
\begin{equation} \label{eqMinLnght}
    \Llow \coloneqq \frac{1}{2}
    \min \left( \min_{e \in \mathcal E} L_e, 1 \right),
\end{equation}
which is well defined and positive since $|\mathcal E| < \infty$.

A point $\tilde{x} \in \G$ on the graph can be identified by giving the edge $e \in \mathcal{E}$ and the coordinate $x$
on the edge: $\tilde{x} = (e, x)$.
With a slight abuse of notation, we will use $x$
for both a point on the graph and a coordinate on one edge when there is no ambiguity.

A function on the metric graph is defined component-wise, giving its value on any edge.
Assigning  $\psi$ on $\G$  corresponds to specifying its edge components  $\{\psi_e\}_{e \in \mathcal{E}}$,
where each $\psi_e$  is a function $\psi_e: I_e \to \C$.
Equivalently, for a point $\tilde x = (e, x)$ on the graph,
we define  $\psi(\tilde x) = \psi_e(x)$.

For a noncompact graph, we define
the \textit{compact core}\footnote{Note that our definition differs slightly
from the usual one in the literature due to the presence of the edges $K_e$:
here we also truncate half-lines.} $\mathcal{K}$ as follows:
for any $e \in \mathcal{E}_{\operatorname{inf}}$,
identified with $[0,\infty)$, let $K_e = [0, \Llow] \subset e$, and let
\begin{equation} \label{eqCompCore}
    \mathcal{K} = \bigcup_{e \in \mathcal{E}_{\operatorname{fin}}} e \cup \bigcup_{e\in \mathcal{E}_{\operatorname{inf}}} K_e.
\end{equation}

We define
\[
    \int_{\G} \psi (x) dx \coloneqq \sum_{e \in \mathcal E} \int_{I_e} \psi_e (x) dx,
\]
and the functional spaces\footnote{Contrary to some papers,
here we do not impose any ``vertex conditions'' for functions in $H^k(\G)$, the regularity is just
measured edge-wise. For instance, with our convention, $H^1(\G)$ functions are not necessarily continuous.}
\begin{equation*}
    L^p (\G) = \bigoplus_{e \in \mathcal E} L^p(I_e), \quad H^k (\G) = \bigoplus_{e \in \mathcal E} H^k(I_e), 
\end{equation*}
endowed with the norms 
\begin{equation*}
    \left\| \psi \right\|_{L^p (\mathcal G)}^p = \sum_{e \in \mathcal E} \left\| \psi_e \right\|_{L^p(I_e)}^p, \quad \left\| \psi \right\|_{H^k (\mathcal G)}^2 = \sum_{e \in \mathcal E} \left\| \psi_e \right\|_{H^k(I_e)}^2
\end{equation*}
 and the scalar product on $L^2(\G)$ as
\[
    \langle \psi, \phi \rangle_{L^2 (\mathcal G)}
    = \operatorname{Re}\left( \int_{\G} \psi (x) \overline{\phi(x)} dx \right)
    = \sum_{e \in \mathcal E} \operatorname{Re}
    \left( \int_{I_e} \psi_e(x) \overline{\phi_e(x)} dx \right).
\]
For $\psi \in H^1(\mathcal G), v \in \mathcal V$ and $e \in J_v$,
we denote by $\psi_e(v)$ the value of $\psi_e$ at the vertex $v$
and by $\Psi(v) \in \C^{d_v}$ the vector given by
\[
    \Psi(v) \coloneqq (\psi_{e_1} (v), \ldots, \psi_{e_{d_v}} (v)),
\]
where $\{ e_1, \dots, e_{d_v} \} = J_v$. 

\subsection{Hamiltonian operator}

We equip the metric graph with the operator $\H : L^2(\mathcal G) \to L^2(\mathcal G)$ which is a self-adjoint extension of the one-dimensional Laplacian. We follow the construction given in \cite[Chapter 1]{BeKu13}, and we report just briefly the main characteristics. As $\H$ is an extension of the Laplacian,  then in any interior part of any edge, we have 
\[
    \H \psi = (-\psi_e'')_{e \in \mathcal E}
\]
and this operator has to be endowed with proper boundary conditions on the vertices. In general, the vertex conditions are of the form
\begin{equation*} 
    A_v \Psi(v) + B_v \Psi'(v) = 0
\end{equation*}
for all $v \in \mathcal V$, where $A_v$ and $B_v$ are $(d_v \times d_v)$-matrices and the domain of $\H$ is 
\[
    \mathcal{D}(\H) = \left\{ \psi \in H^2(\mathcal G) \mid A_v \Psi(v) + B_v \Psi'(v) = 0 \text{ for all } v \in \mathcal V \right\}.
\]

In practice, one may check the self-adjointness of $\H$
using the following classical criteria
(see e.g.\,\cite[Theorem 1.4.4.]{BeKu13}):
\begin{proposition}\label{prpHSelfAdj}
    The following assertions are equivalent:
    \begin{enumerate}
        \item The operator $\H$ is self-adjoint.
        \item For every $v \in \mathcal V$, there exist three orthogonal and mutually orthogonal
        projectors $P_{D,v}, P_{N,v}$ and $P_{R,v} = I_{d_v} - P_{D,v} - P_{N,v}$
        acting on $\C^{d_v}$ and an invertible Hermitian matrix $\Lambda_v$ acting
        on the subspace $P_{R,v} \C^{d_v}$ such that, for $\psi \in \mathcal{D}(\H)$,
        \[
            P_{D,v} \Psi(v) = 0,
            \quad P_{N,v} \Psi'(v) = 0,
            \quad P_{R,v} \Psi'(v) = \Lambda_v P_{R,v} \Psi(v).
        \]
    \end{enumerate}
\end{proposition}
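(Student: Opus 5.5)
We prove the equivalence by reducing self-adjointness to a finite-dimensional statement about boundary values. The first step is Green's formula for the maximal operator. For $\psi, \phi \in H^2(\G)$, integrate by parts edge by edge. The contributions at infinity vanish because $H^2$ functions on a half-line decay together with their derivatives. This gives
\[
\langle -\psi'', \phi\rangle_{L^2(\G)} - \langle \psi, -\phi''\rangle_{L^2(\G)}
= \operatorname{Re}\sum_{v\in\mathcal V}\Big( \langle \Psi'(v),\Phi(v)\rangle_{\C^{d_v}} - \langle \Psi(v),\Phi'(v)\rangle_{\C^{d_v}}\Big),
\]
where derivatives are taken in the outgoing direction at each vertex. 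Since the conditions are local at each vertex, the vertices decouple. Self-adjointness of $\H$ is then equivalent to the following: at each $v$, the subspace
\[
\mathcal L_v=\{(\Psi(v),\Psi'(v))\}=\ker (A_v\,|\,B_v)\subset \C^{2d_v}
\]
is Lagrangian for the Hermitian symplectic form $\omega((x,y),(\tilde x,\tilde y))=\langle y,\tilde x\rangle-\langle x,\tilde y\rangle$. Lagrangian means that $\omega$ vanishes on $\mathcal L_v$ and $\dim\mathcal L_v=d_v$.

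To justify this reduction we use that the trace map $H^2(\G)\to\bigoplus_v\C^{2d_v}$, $\psi\mapsto(\Psi(v),\Psi'(v))_v$, is surjective. This follows from cutoff functions supported near each vertex, using the length $\Llow$. Symmetry of $\H$ is then equivalent to $\omega|_{\mathcal L_v}=0$. The domain of the adjoint consists of the $\phi\in H^2(\G)$ whose traces lie in $\mathcal L_v^{\omega}$, the $\omega$-orthogonal of $\mathcal L_v$. Here one first shows $\mathcal D(\H^*)\subset H^2(\G)$, since $\H^*$ is a restriction of the maximal operator. Hence $\H=\H^*$ if and only if $\mathcal L_v=\mathcal L_v^\omega$ for every $v$, which is the Lagrangian condition.

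For (2)$\Rightarrow$(1), we check that the subspace given by the projector description is Lagrangian. Write $x=\Psi(v)$ and $y=\Psi'(v)$. The conditions say $x\in\operatorname{Ran}(P_N+P_R)$ and $y=P_D y+\Lambda P_R x$. Its dimension is $\operatorname{rk}P_N+\operatorname{rk}P_R+\operatorname{rk}P_D=d_v$. Isotropy is a direct computation: $\langle y,\tilde x\rangle-\langle x,\tilde y\rangle$ reduces to $\langle \Lambda P_Rx,P_R\tilde x\rangle-\langle P_Rx,\Lambda P_R\tilde x\rangle$, which vanishes because $\Lambda$ is Hermitian. The cross terms vanish by mutual orthogonality of the projectors.

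For (1)$\Rightarrow$(2), start from a Lagrangian $\mathcal L$ of dimension $d$ and construct the projectors. Set
\[
P_D=\text{projection onto }\{y:(0,y)\in\mathcal L\},\qquad X=\{x:\exists y,(x,y)\in\mathcal L\}.
\]
Isotropy gives $X\perp \operatorname{Ran}P_D$. A dimension count gives $X=(\operatorname{Ran}P_D)^\perp$: indeed $\dim X+\dim\operatorname{Ran}P_D=\dim\mathcal L=d$. For $x\in X$, the vector $y$ is determined modulo $\operatorname{Ran}P_D$. Projecting onto $X$ therefore defines a linear map $T:X\to X$, and isotropy makes $T$ Hermitian. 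Then decompose $X=\ker T\oplus\operatorname{Ran}T$, let $P_N$ be the projection onto $\ker T$, let $P_R$ be the projection onto $\operatorname{Ran}T$, and set $\Lambda=T|_{\operatorname{Ran}T}$, which is invertible and Hermitian. Unwinding the definitions recovers exactly the three conditions in (2).

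The main obstacle is the analytic step identifying $\mathcal D(\H^*)$ with an $H^2$ space carrying the $\omega$-orthogonal conditions. This is handled by a standard regularity argument on each edge: if $\phi\in L^2$ and $\langle \H\psi,\phi\rangle=\langle\psi,g\rangle$ for all $\psi\in C_c^\infty$ of the edge interiors, then $-\phi''=g$ weakly, so $\phi\in H^2$. The remaining conditions then follow from Green's formula combined with surjectivity of the trace map.
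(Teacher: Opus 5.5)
Your proof is correct. The paper does not prove this proposition itself; it quotes it from \cite[Theorem~1.4.4]{BeKu13}. What you have written is a self-contained version of the standard Lagrangian-subspace argument behind that reference: Green's formula, surjectivity of the vertex trace map $\psi\mapsto(\Psi(v),\Psi'(v))_v$, and the identification of $\mathcal{D}(\H^*)$ with the $H^2$ functions whose traces lie in $\mathcal{L}_v^{\omega}$, so that $\H=\H^*$ if and only if $\mathcal{L}_v=\mathcal{L}_v^{\omega}$.

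Your route differs from the usual textbook presentation mainly in (1)$\Rightarrow$(2). A common approach first passes through an intermediate matrix criterion, such as $(A_v\,|\,B_v)$ having maximal rank with $A_vB_v^*$ Hermitian, or a unitary parametrization of the vertex conditions, and then reads off the projectors. You instead build $P_{D,v}$, the space $X$ and the Hermitian map $T$ directly from $\mathcal{L}_v$. That is shorter and coordinate-free, whereas the cited theorem also gives those other normal forms of the vertex conditions.

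One step in (2)$\Rightarrow$(1) is left implicit and deserves a line. Read literally, (2) only says that traces of elements of $\mathcal{D}(\H)$ satisfy the projector conditions, i.e.\ $\mathcal{L}_v\subseteq\mathcal{L}_v'$, where $\mathcal{L}_v'$ is the $d_v$-dimensional subspace you describe; you use trace surjectivity, with zero data at the other vertices, to get this inclusion. Equality, which you use when checking that ``the subspace'' is Lagrangian, follows because $(A_v\,|\,B_v)$ is a $d_v\times 2d_v$ matrix, so $\dim\mathcal{L}_v=\dim\ker(A_v\,|\,B_v)\ge d_v$.
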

In what follows, the operator $\H$ is always supposed to be self-adjoint.
As a consequence of point $2)$ in Proposition \ref{prpHSelfAdj}, this allows us to write the vertex conditions only as functions of the value of the function at the vertex, without involving the derivatives in the quadratic form. Indeed, one has
\begin{align}\label{eqQuadFormDef}
    Q_{\H}(\psi) & = \langle \H \psi, \psi \rangle_{L^2(\mathcal G)}
    = \| \psi' \|_{L^2(\G)}^2 + V(\psi)
\end{align}
where 
\begin{equation}\label{eqLocPart}
    V(\psi) \coloneqq \sum_{v \in \mathcal V} \langle \Lambda_v P_{R,v} \Psi(v), P_{R,v} \Psi(v) \rangle_{\C^{d_v}},
\end{equation}
see \cite[Equation $(1.4.38)$]{BeKu13}. Moreover, the domain of $Q_{\H}$ is given by
\begin{equation}
    \label{eqDomainQ}
    \DQH = \left\{ \psi \in H^1(\mathcal G) \mid P_{D, v} \Psi(v) = 0 \text{ for all } v \in \mathcal V \right\}.
\end{equation}
Thus, the quadratic form comprises two parts:
the $\dot H^1(\mathcal G)$-norm and a local part
coming from the vertex conditions. Due to the local one-dimensional nature of graphs
and Proposition \ref{prpHSelfAdj}, Weyl's theorem implies that the essential spectrum
of $\H$ is the same as that of $-\frac{d^2}{dx^2}$ when there is at least one infinite edge.
Moreover, the local part in the quadratic form is sequentially weakly continuous
along bounded $H^1(\mathcal{G})$ sequences.
Specifically, the following proposition holds
(see e.g.\,\cite[Lemma 2.2]{DuSh26}).
\begin{proposition} \label{prpStrongCon}
    Let $(\psi_n)_{n} \subseteq \DQH$
    be a bounded sequence
    in $H^1 (\mathcal G)$. Then, there exists $\psi \in \DQH$
    and a subsequence $(\psi_{n_k})$, such that $\psi_{n_k} \rightharpoonup \psi$
    as $n \to \infty$ weakly in $H^1(\mathcal G)$ and, for any $v \in \mathcal V$,
    $V(\psi_{n_k}) \to V(\psi)$ as $k \to \infty$.
    Moreover, if $|\mathcal{E}_{\operatorname{inf}}| > 0$, then $\sigma_{ess} ( \H ) = [0, \infty)$.
\end{proposition}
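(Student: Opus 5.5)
The first claim is a compactness statement, and I would prove it edge by edge using the finiteness of $\mathcal E$ and $\mathcal V$. Since $H^1(\G)=\bigoplus_{e} H^1(I_e)$ is a Hilbert space, a bounded sequence $(\psi_n)$ has a subsequence with $\psi_{n_k}\rightharpoonup\psi$ weakly in $H^1(\G)$. I would first check that $\DQH$ is a closed subspace of $H^1(\G)$. For each $v$ and $e\in J_v$, the trace map $\psi\mapsto\psi_e(v)$ is continuous on $H^1(I_e)$ by the one-dimensional Sobolev embedding, so $\DQH$, defined in \eqref{eqDomainQ} by the finitely many linear constraints $P_{D,v}\Psi(v)=0$, is closed. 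A closed convex set is weakly closed, so $\psi\in\DQH$.

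Next I would show that the traces converge strongly. For each edge $e\in J_v$, restrict to a compact subinterval $[0,\Llow]\subset I_e$ containing the endpoint corresponding to $v$. The embedding $H^1(0,\Llow)\hookrightarrow C([0,\Llow])$ is compact (Arzelà–Ascoli: $H^1$-bounded sets are uniformly bounded and $\frac12$-Hölder equicontinuous). Weak convergence in $H^1$ therefore gives uniform convergence on $[0,\Llow]$, and in particular $\psi_{n_k,e}(v)\to\psi_e(v)$. Because there are finitely many vertex/edge pairs, $\Psi_{n_k}(v)\to\Psi(v)$ in $\C^{d_v}$ for every $v$. The local part \eqref{eqLocPart} is a finite sum of continuous quadratic forms $z\mapsto\langle\Lambda_vP_{R,v}z,P_{R,v}z\rangle$ on finite-dimensional spaces, so $V(\psi_{n_k})\to V(\psi)$.

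For the essential spectrum, I would compare $\H$ with the decoupled operator $\H_D$, which is the Laplacian with Dirichlet conditions at every vertex endpoint of every edge. Both are self-adjoint extensions of the same symmetric operator: the Laplacian on $\bigoplus_e H^2_0$-type functions vanishing together with their derivatives at the vertices. That operator has finite deficiency indices (at most $\sum_v d_v$). By Krein's resolvent formula, $(\H-z)^{-1}-(\H_D-z)^{-1}$ then has finite rank, and Weyl's theorem gives $\sigma_{ess}(\H)=\sigma_{ess}(\H_D)$. Since $\H_D$ is a direct sum, its essential spectrum is computed edgewise. On each finite edge the Dirichlet Laplacian has compact resolvent and contributes nothing. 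On each half-line the Dirichlet Laplacian has spectrum $[0,\infty)$, all of it essential. Hence $\sigma_{ess}(\H)=[0,\infty)$ as soon as $|\mathcal{E}_{\operatorname{inf}}|>0$.

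The main point requiring care is the finite-rank (or at least compact) resolvent difference. If invoking deficiency indices feels too abstract, I would instead argue directly. To show $[0,\infty)\subset\sigma_{ess}(\H)$, build Weyl sequences $\chi_n(x)e^{ikx}$ supported far out on a half-line. These lie in $\mathcal D(\H)$, since they vanish near the vertices. To show there is no essential spectrum in $(-\infty,0)$, use the quadratic form: by Proposition~\ref{prpStrongCon}'s first part, $V$ is weakly continuous, hence form-compact relative to $\|\psi'\|^2+\|\psi\|^2$. This gives $\inf\sigma_{ess}(\H)\ge 0$ via the standard min-max or Weyl argument for form-compact perturbations.
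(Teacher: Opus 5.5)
Your proof is correct and follows essentially the paper's route. The paper gives no self-contained proof: it cites \cite[Lemma 2.2]{DuSh26} for the weak-compactness and convergence of the vertex term, and invokes Weyl's theorem in one sentence for $\sigma_{ess}(\H)=[0,\infty)$. Your argument (closedness of $\DQH$, the compact embedding $H^1(0,\Llow)\hookrightarrow C([0,\Llow])$ for the traces, and a finite-rank resolvent difference with the decoupled Dirichlet operator followed by Weyl's theorem) is the standard argument behind that sketch, written out in full.
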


The operator \( \H \) can have at most a finite number
of negative eigenvalues, as established
in \cite[Theorem 3.7]{KoSc06},
and each of these eigenvalues belongs
to the discrete spectrum \( \sigma_{\text{dis}}(\H) \).
Recalling \eqref{eqLGamOm}, this implies that
if $|\mathcal{E}_{\operatorname{inf}}|\geq 1$, then either $l_{\H} < 0$
and $l_{\H} \in \sigma_{dis}(\H)$, or $l_{\H} = 0$.

When $l_{\H} < 0$, let $\H$ has $k \ge 1$ negative eigenvalues
$\lambda_1 \le \cdots \le \lambda_k < 0$.
We then define $E_-$ to be the vector
space spanned by the eigenfunctions associated
to those eigenvalues.
The fact that $E_-$ is finite-dimensional has several
important consequences.

\begin{lemma}
    \label{lemEquivNorms}
    There exists $\Gamma > 0$ such that,
    for all $\psi \in E_-$, one has
    $\| \psi \|_{L^{p+1}(\G)} \le \Gamma \| \psi \|_{L^2(\G)}$.
\end{lemma}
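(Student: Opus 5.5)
The plan is to use the fact that all norms are equivalent on a finite-dimensional space. The first step is to check that $E_-$ is a finite-dimensional subspace of $L^{p+1}(\G)$. By the discussion preceding the lemma, $\H$ has at most finitely many negative eigenvalues. Each of them lies in the discrete spectrum and therefore has finite multiplicity. Hence $E_-$ is spanned by finitely many eigenfunctions $\phi_1,\dots,\phi_k$, and $\dim E_- \le k < \infty$.

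The second step is to show that each $\phi_j$ belongs to $L^{p+1}(\G)$. Every eigenfunction lies in $\mathcal D(\H) \subseteq H^2(\G) \subseteq H^1(\G)$. On each edge, the one-dimensional Sobolev embedding $H^1(I_e) \hookrightarrow L^\infty(I_e)$ holds with a constant depending only on a lower bound for the length of $I_e$, so $\Llow$ controls it uniformly. Combining this with the $L^2$ bound gives
\[
\|\psi\|_{L^{p+1}(\G)}^{p+1} \le \|\psi\|_{L^\infty(\G)}^{p-1}\|\psi\|_{L^2(\G)}^2 < \infty
\]
for every $\psi \in H^1(\G)$, and since $|\mathcal E|<\infty$ the sum over edges is finite. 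Consequently both $\|\cdot\|_{L^{p+1}(\G)}$ and $\|\cdot\|_{L^2(\G)}$ are finite on $E_-$, and both are norms there.

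The conclusion is then immediate. Any two norms on a finite-dimensional vector space are equivalent. Concretely, the map $\psi \mapsto \|\psi\|_{L^{p+1}(\G)}$ is continuous on the compact unit sphere of $(E_-,\|\cdot\|_{L^2(\G)})$, so it attains a finite maximum $\Gamma$ there. Homogeneity then gives $\|\psi\|_{L^{p+1}(\G)} \le \Gamma\|\psi\|_{L^2(\G)}$ for all $\psi \in E_-$.

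There is no real obstacle here. The only points requiring care are the finiteness of the multiplicities, which comes from the negative eigenvalues lying in the discrete spectrum, and the fact that eigenfunctions belong to $L^{p+1}$, which follows from $H^1$ regularity on a graph with finitely many edges.
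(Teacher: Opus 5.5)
Your proof is correct and follows the same route as the paper, which simply invokes the equivalence of norms on the finite-dimensional space $E_-$. Your extra checks are exactly the details the paper leaves implicit: the negative eigenvalues have finite multiplicity because they lie in the discrete spectrum, and the $L^{p+1}$ norm is finite on $H^1(\G)$ by the edgewise embedding into $L^\infty$.
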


\begin{proof}
    Since $E_-$ is a finite-dimensional vector space,
    all norms on it are equivalent.
\end{proof}

\begin{lemma}[Gårding's Inequality]
    \label{lemGarding}
    There exist $C_1, C_2 > 0$ such that,
    for all $\psi \in \DQH$,
    \begin{equation*}
        \langle \H \psi, \psi \rangle_{L^2 (\G)}
        \ge C_1 \| \psi \|_{H^1 (\G)}^2 - C_2 \| \psi \|_{L^2 (\G)}^2.
    \end{equation*}
\end{lemma}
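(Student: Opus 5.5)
Since $\|\psi\|_{H^1(\G)}^2 = \|\psi'\|_{L^2(\G)}^2 + \|\psi\|_{L^2(\G)}^2$, we can write $Q_{\H}(\psi) = \|\psi'\|_{L^2(\G)}^2 + V(\psi)$ by \eqref{eqQuadFormDef}. The plan is therefore to show that the vertex term is small relative to the kinetic term. Concretely, we aim for: for every $\varepsilon>0$ there is $C_\varepsilon>0$ with
\[
|V(\psi)| \le \varepsilon \|\psi'\|_{L^2(\G)}^2 + C_\varepsilon \|\psi\|_{L^2(\G)}^2 \qquad \text{for all } \psi\in\DQH .
\]
Choosing $\varepsilon = 1/2$ then gives $Q_{\H}(\psi) \ge \tfrac12\|\psi'\|_{L^2(\G)}^2 - C_{1/2}\|\psi\|_{L^2(\G)}^2$. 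Writing $\tfrac12\|\psi'\|^2 = \tfrac12\|\psi\|_{H^1}^2 - \tfrac12\|\psi\|_{L^2}^2$, we obtain the statement with $C_1 = 1/2$ and $C_2 = C_{1/2} + 1/2$.

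To bound $V$, note that $\Lambda_v$ is a bounded operator on a finite-dimensional space and $P_{R,v}$ is an orthogonal projector. Hence
\[
|V(\psi)| \le \Big(\max_{v\in\mathcal V}\|\Lambda_v\|\Big)\sum_{v\in\mathcal V}|\Psi(v)|^2 = M \sum_{v\in\mathcal V}\sum_{e\in J_v}|\psi_e(v)|^2 .
\]
This sum is finite because $\mathcal V$ and $\mathcal E$ are finite. It therefore suffices to bound a single endpoint value $|\psi_e(v)|^2$.

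For this we use the one-dimensional trace estimate on a segment of fixed length. Identify the part of $e$ near $v$ with $[0,\Llow]$, with $v$ at $0$; this is possible for every edge by \eqref{eqMinLnght}. Fix $0<\delta\le\Llow$. By the fundamental theorem of calculus, $|f(0)|^2 = |f(x)|^2 - \int_0^x (|f|^2)'$. Averaging over $x\in[0,\delta]$ and using $\big|(|f|^2)'\big| \le 2|f||f'|$ together with Young's inequality gives
\[
|f(0)|^2 \le \Big(\tfrac1\delta + \tfrac1\eta\Big)\int_0^\delta |f|^2 + \eta\int_0^\delta |f'|^2
\]
for any $\eta>0$. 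The segment $[0,\delta]$ is contained in $[0,\Llow]$, and each edge has at most two endpoints. Summing over all pairs $(v,e)$ with $e\in J_v$ (a loop counted twice, which only affects constants) yields
\[
\sum_{v\in\mathcal V} |\Psi(v)|^2 \le 2\eta \|\psi'\|_{L^2(\G)}^2 + 2\big(\Llow^{-1} + \eta^{-1}\big)\|\psi\|_{L^2(\G)}^2 ,
\]
where we took $\delta=\Llow$. Taking $\eta = \varepsilon/(2M)$ gives the desired relative bound.

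I do not expect a real obstacle here; the argument is a standard trace inequality followed by absorption of the vertex term. The two points that need care are:
\begin{itemize}
  \item the bookkeeping of edge endpoints, where the choice of $\Llow$ guarantees that the segments used near different endpoints of the same edge do not overlap;
  \item the trivial case where $P_{R,v}=0$ for every vertex. Then $V\equiv 0$ and the inequality holds directly with $C_1=C_2=1$.
\end{itemize}
Finally, the Dirichlet condition $P_{D,v}\Psi(v)=0$ in \eqref{eqDomainQ} plays no role: the estimate in fact holds on all of $H^1(\G)$.
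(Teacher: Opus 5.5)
Your proposal is correct and follows essentially the paper's route: bound $|V(\psi)|$ by a constant times $\sum_{v}|\Psi(v)|^2$, control the vertex values by an $\varepsilon$-weighted one-dimensional trace inequality summed over edges, and absorb the kinetic part. The only differences are cosmetic. You derive the trace bound by hand on $[0,\Llow]$ (as in \eqref{eqLocalGN}) rather than citing the $L^\infty$ trace inequality. You also explicitly pass from $\|\psi'\|_{L^2(\G)}^2$ to $\|\psi\|_{H^1(\G)}^2$ by enlarging $C_2$, a step the paper leaves implicit.
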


\begin{proof}
    Recalling \eqref{eqLocPart},
    there exists $\Gamma > 0$ such that one has
    \begin{equation*}
        |V(\psi)|
        \le \Gamma \sum_{v \in \mathcal V} |\Psi(v)|^2.
    \end{equation*}
    Moreover, the 1D trace inequality
    (see e.g.\,\cite[Lemma 1.3.8]{BeKu13}) implies that for any $\eps > 0$, there exists a constant $K_\eps > 0$ such that for every edge $e \in \mathcal E$ and any $\psi_e \in H^1(e)$, one has
    \begin{equation*}
        \|\psi_e\|_{L^\infty(e)}^2 \le \eps \|\psi_e'\|_{L^2(e)}^2 + K_\eps \|\psi_e\|_{L^2(e)}^2.
    \end{equation*}
    Summing this estimate over all edges $e \in \mathcal E$ (and noting that the vector norm $|\Psi(v)|^2$ is composed of the boundary values of the incident edges), we obtain
    \begin{equation} \label{eqTraceGraph}
        \sum_{v \in \mathcal V} |\Psi(v)|^2 \le 2 |\mathcal E| \left( \eps \|\psi'\|_{L^2(\G)}^2 + K_\eps \|\psi\|_{L^2(\G)}^2 \right).
    \end{equation}
    Choosing $\eps > 0$ such that
    $2 |\mathcal E| \Gamma \eps < 1$, we obtain
    \begin{equation*}
        Q_{\H}(\psi)
        \ge \|\psi'\|_{L^2 (\G)}^2 - \Gamma \sum_{v \in \mathcal V} |\Psi(v)|^2 
        \ge (1 - 2 |\mathcal E| \Gamma \eps) \|\psi'\|_{L^2 (\G)}^2
        - (2 |\mathcal E| \Gamma K_\eps) \|\psi\|_{L^2 (\G)}^2
    \end{equation*}
    which ends the proof
    setting $C_1 \coloneqq 1 - 2 |\mathcal E| \Gamma \eps > 0$
    and $C_2 \coloneqq 2 |\mathcal E| \Gamma K_\eps$.
\end{proof}

Using Gårding's Inequality,
we obtain the following criterion
for $H^1$-boundedness.
\begin{proposition} \label{prpBddEnergy}
    Let $(\psi_n)_{n} \subseteq \DQH$.
    If $(\psi_n)_{n}$ is bounded in $L^2(\G)$ and such that
    \begin{equation*}
        \limsup_{n \to \infty} E_{\H}(\psi_n) < +\infty,
    \end{equation*}
    then $(\psi_n)_n$ is bounded in $H^1(\G)$.
\end{proposition}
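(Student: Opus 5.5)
The argument combines the definition of the energy with Gårding's inequality (Lemma~\ref{lemGarding}); the nonlinear term only helps because it is nonnegative. Set $M \coloneqq \sup_n \|\psi_n\|_{L^2(\G)} < \infty$ and $A \coloneqq \limsup_{n\to\infty} E_{\H}(\psi_n) < +\infty$.

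Discarding finitely many indices, we may assume $E_{\H}(\psi_n) \le A + 1$ for all $n$. By \eqref{eqEnIntr}, and since $\frac{1}{p+1}\|\psi_n\|_{L^{p+1}(\G)}^{p+1} \ge 0$,
\[
    \tfrac12 Q_{\H}(\psi_n) \le E_{\H}(\psi_n) \le A+1 .
\]
Here $\|\psi_n\|_{L^{p+1}(\G)}$ may a priori be infinite, but then $E_{\H}(\psi_n) = +\infty$. This can happen only for finitely many $n$ because of the $\limsup$ bound, and we have already removed those indices.

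Lemma~\ref{lemGarding} then gives
\[
    C_1 \|\psi_n\|_{H^1(\G)}^2 \le Q_{\H}(\psi_n) + C_2 \|\psi_n\|_{L^2(\G)}^2 \le 2(A+1) + C_2 M^2 ,
\]
and hence $\|\psi_n\|_{H^1(\G)}^2 \le C_1^{-1}\bigl(2(A+1) + C_2 M^2\bigr)$ for all remaining $n$. The finitely many discarded terms belong to $\DQH \subseteq H^1(\G)$, so each has finite $H^1$ norm. Therefore the whole sequence is bounded in $H^1(\G)$.

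There is no real obstacle here. The only points needing care are the sign of the nonlinear term, which is favorable in the defocusing case, and using the $\limsup$ bound correctly, that is, handling finitely many exceptional indices separately.
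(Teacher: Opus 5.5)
Your proof is correct and follows the paper's argument: you drop the nonnegative nonlinear term to get $Q_{\H}(\psi_n) \le 2E_{\H}(\psi_n)$, then apply Gårding's inequality (Lemma~\ref{lemGarding}) together with the $L^2$ bound. Your separate treatment of finitely many indices is more careful than the paper's direct bound $E_{\H}(\psi_n) \le \limsup_n E_{\H}(\psi_n)$; on the other hand, the caveat about $\|\psi_n\|_{L^{p+1}(\G)}$ possibly being infinite is unnecessary, since $H^1(\G) \subset L^\infty(\G) \cap L^2(\G) \subset L^{p+1}(\G)$.
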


\begin{proof}
    Let $(\psi_n)_n \subseteq \DQH$ be a sequence
    satisfying the assumptions of the proposition.
    Applying Gårding's inequality (Lemma~\ref{lemGarding})
    to the total energy and remarking that the non-linearity is non-negative, we have
    \begin{equation*}
        C_1 \|\psi_n\|_{H^1 (\G)}^2 - C_2 \|\psi_n\|_{L^2 (\G)}^2
        \le \langle \H \psi_n, \psi_n \rangle_{L^2 (\G)}
        \le 2E_{\H}(\psi_n)
        \le 2\limsup_{n \to \infty} E_{\H}(\psi_n)
        < +\infty.
    \end{equation*}
    Since $(\|\psi_n\|_{L^2 (\G)})$ is bounded,
    so is $(\|\psi_n\|_{H^1 (\G)})$.
\end{proof}

\subsection{Properties of solutions}

We recall the following properties of solutions to \eqref{eqNls}.

\begin{proposition} \label{prpPropert}
Let $\psi\in \DQH$ to \eqref{eqNls} be any solution to  \eqref{eqNls}. Then, the following statements hold.
\begin{enumerate}
    \item We have
    \begin{equation}\label{eqNeh1}
        Q_{\H}(\psi) + \omega \| \psi \|_{L^2(\G)}^2 + \|  \psi \|_{L^{p+1}(\G)}^{p+1} = 0.
    \end{equation}
    Note that this implies that for all nonzero solutions, $\omega$ is real.
    \item If $|\mathcal{E}_{\operatorname{inf}}| > 0$, then for any $e\in \mathcal{E}_{\operatorname{inf}}$, identified with $[0,\infty)$, we have
    \begin{equation}\label{eqDecayHL}
        \lim_{x\to \infty} |\psi_e(x)| + |\psi_e'(x)| \to 0
    \end{equation}
    and, for any $x >0$, 
    \begin{equation}
        \label{eqDecayHL2}
        |\psi_e'(x)|^2 = \omega |\psi_e(x)|^2 + \frac{2|\psi_e(x)|^{p+1}}{p+1}.
    \end{equation}
    \item For any $q \in [2,\infty]$ and any $e \in \mathcal{E}$, we have $\psi_e \in W^{3,q}(\mathring{e})$.
    \item For any $e \in \mathcal{E}_{\operatorname{inf}}$,
    either $\psi_e \equiv 0$ or $\psi_e(x) \neq 0$
    for all $x \in e$.
\end{enumerate}
\end{proposition}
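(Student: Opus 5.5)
We will prove the four items separately. All of them use only the edgewise ODE \eqref{eqNls2} and the description of $\DQH$ and $Q_{\H}$ in \eqref{eqQuadFormDef}–\eqref{eqDomainQ}.

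\emph{Item (1).} We test the equation against $\psi$ itself. Since $\psi\in\DQH$ solves \eqref{eqNls}, it lies in $\mathcal D(\H)$ (in the weak sense given by the form), so
\[
\langle \H\psi,\psi\rangle_{L^2(\G)} = Q_{\H}(\psi).
\]
Pairing \eqref{eqNls} with $\psi$ then gives
\[
Q_{\H}(\psi)+\omega\|\psi\|_{L^2(\G)}^2+\|\psi\|_{L^{p+1}(\G)}^{p+1}=0.
\]
To see that $\omega$ is real, we use the complex sesquilinear pairing rather than the real part. The values $Q_{\H}(\psi)$ and $\|\psi\|_{L^{p+1}}^{p+1}$ are real because $\Lambda_v$ is Hermitian. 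Taking imaginary parts gives $\operatorname{Im}(\omega)\|\psi\|_{L^2}^2=0$, so $\omega\in\R$ whenever $\psi\neq0$.

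\emph{Item (3).} This is a bootstrap on each edge. Since $\psi_e\in H^1(I_e)\subset L^\infty$, the right-hand side $\omega\psi_e+|\psi_e|^{p-1}\psi_e$ of \eqref{eqNls2} lies in $L^2\cap L^\infty$, so $\psi_e''\in L^q$ for every $q\in[2,\infty]$.

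Next, $g(z)=|z|^{p-1}z$ is $C^1$ for $p>1$, so $(g(\psi_e))'=Dg(\psi_e)\psi_e'$ belongs to $L^q$ whenever $\psi_e'\in L^q$. Since $\psi_e\in W^{2,q}$, we have $\psi_e'\in L^q$, and hence $\psi_e'''\in L^q$. This gives $\psi_e\in W^{3,q}(\mathring e)$. On a half-line, the $L^q$ bounds for $q\ge2$ follow from $L^2\cap L^\infty$ by interpolation.

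\emph{Item (2).} Take a half-line $e$. Then $\psi_e\in H^1(0,\infty)$ gives $\psi_e(x)\to0$. By item (3), $\psi_e''\in L^2$ and $\psi_e'\in L^2$, so $\psi_e'\in H^1(0,\infty)$ and $\psi_e'(x)\to0$ as well. This proves \eqref{eqDecayHL}.

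For \eqref{eqDecayHL2} we use the first integral of the ODE. Multiply \eqref{eqNls2} by $\overline{\psi_e'}$ and take real parts:
\[
\tfrac{d}{dx}\Big(|\psi_e'|^2-\omega|\psi_e|^2-\tfrac{2}{p+1}|\psi_e|^{p+1}\Big)=0 .
\]
So this quantity is constant on $(0,\infty)$. By the decay just proved it tends to $0$ at infinity, hence it vanishes identically, which is \eqref{eqDecayHL2}.

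\emph{Item (4).} Suppose $\psi_e(x_0)=0$ for some $x_0\in e$. If $x_0$ is interior, \eqref{eqDecayHL2} gives $\psi_e'(x_0)=0$. If $x_0$ is the endpoint, the same holds by continuity of $\psi_e$ and $\psi_e'$ up to the endpoint, since $\psi_e\in W^{3,q}$.

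The nonlinearity $z\mapsto\omega z+|z|^{p-1}z$ is locally Lipschitz for $p>1$. Cauchy–Lipschitz uniqueness for \eqref{eqNls2}, viewed as a first-order system in $(\psi_e,\psi_e')$ with zero data at $x_0$, then forces $\psi_e\equiv0$ on $e$.

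The only delicate points are the use of the complex pairing in item (1) and the endpoint case in item (4).
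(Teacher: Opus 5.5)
Your proof is correct. For items (1)--(3) the paper gives no argument of its own. It only refers to the standard computation in \cite[Lemma 3.2]{FuOhOz08}, which is the same testing/bootstrap/first-integral argument you wrote. That includes your observation that the imaginary part of $\omega$ is only detected through the complex sesquilinear pairing, not through the real scalar product the paper uses.

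The genuine difference is item (4). The paper deduces it from the classification of $H^1(\R^+)$ solutions in Proposition~\ref{prpHLSol}: the explicit $\sinh$ and algebraic profiles with shifts $a,b>0$ never vanish, and their uniqueness is imported from \cite{KaOh09}. You instead use the first integral \eqref{eqDecayHL2}, which forces $\psi_e'(x_0)=0$ at any zero $x_0$ (at the vertex endpoint via continuity of $\psi_e,\psi_e'$ up to $x=0$). You then conclude with Cauchy--Lipschitz uniqueness for the first-order system in $(\psi_e,\psi_e')$.

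Your route is self-contained and handles every $\omega$ and complex-valued solutions in one stroke. It avoids first reducing a complex half-line solution to a real one up to a constant phase, which the classification implicitly requires. The paper's route gives more, namely the explicit profile on each half-line, but it depends on an external uniqueness theorem.
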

The proof of Proposition \ref{prpPropert} is standard,
see for example \cite[Lemma $3.2$]{FuOhOz08}.
The last point follows from the classification
of all solutions to the stationary equation on the half-line,
which we recall below.

\begin{proposition}\label{prpHLSol}
    The only $H^1(\R^+)$ solutions (up to a complex phase shift) to \eqref{eqNls} are given by
    \begin{itemize}
        \item for $\omega > 0$,
        \begin{equation}\label{eqSol1}
            \phi_{\omega}(x) = \left(\frac{(p+1)\omega}{2} \right)^\frac{1}{p-1} \left(\sinh \left(\frac{(p-1) \sqrt{\omega}}{2}x + a \right) \right)^{-\frac{2}{p-1}}
        \end{equation}
        with $a = a (\omega,p, \H) > 0$;
        \item for $\omega = 0$ and $p \in (1,5)$,
        \begin{equation}\label{eqSol2}
            \phi_0 (x) = \left( \frac{2(p + 1)}{(p - 1)^2} \right)^{\frac{1}{p - 1}} \left( x + b \right)^{-\frac{2}{p - 1}} 
        \end{equation}
        with $b = b(p,\H) >0$.
    \end{itemize}
    For $\omega < 0$ and $p>1$ or $\omega = 0$ and $p \geq 5$, the only solution is $\phi \equiv 0$. 
\end{proposition}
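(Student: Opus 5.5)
I will classify all $H^1(\R^+)$ solutions of the scalar ODE $\phi'' = \omega\phi + |\phi|^{p-1}\phi$ on $(0,\infty)$ by reducing it to a first-order energy identity. First, by Proposition~\ref{prpPropert}(3) any solution is smooth in the interior. Multiplying the equation by $\overline{\phi'}$, taking real parts and integrating from $x$ to $\infty$, the decay \eqref{eqDecayHL} gives \eqref{eqDecayHL2}:
\[
|\phi'|^2 = \omega|\phi|^2 + \tfrac{2}{p+1}|\phi|^{p+1}.
\]
Next I remove the phase. Write $\phi$ with the conserved angular momentum $J = \operatorname{Im}(\overline{\phi}\phi')$; one has $J' = \operatorname{Im}(\overline{\phi}\phi'') = 0$, since $\overline{\phi}\phi''$ is real. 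Decay forces $J = 0$. On any interval where $\phi \neq 0$ we can write $\phi = \rho e^{i\theta}$, and then $\rho^2\theta' = J = 0$, so the phase is constant. I would then show, as below, that $\phi$ either vanishes identically or never vanishes; in the latter case $\phi = e^{i\theta_0}\rho$ with $\rho > 0$ real. Uniqueness for the ODE reduces the zero case: if $\phi(x_0) = 0$, the energy identity gives $\phi'(x_0) = 0$, hence $\phi \equiv 0$.

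For real $\rho > 0$, the right-hand side $\rho'' = \omega\rho + \rho^p$ has a sign we can exploit.

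\begin{itemize}
\item \textbf{Case $\omega < 0$, or $\omega = 0$.} If $\omega < 0$, the identity evaluated where $\rho$ is small forces $\omega\rho^2 + \tfrac{2}{p+1}\rho^{p+1} \ge 0$, i.e.\ $\rho^{p-1} \ge -\omega(p+1)/2 > 0$. So $\rho$ is bounded below and cannot tend to $0$, contradicting decay; hence $\phi \equiv 0$. If $\omega = 0$, we have $\rho' = -\sqrt{2/(p+1)}\,\rho^{(p+1)/2}$, choosing the minus sign because $\rho$ decreases to $0$ (and $\rho'$ never vanishes). Integrating gives $\rho^{-(p-1)/2} = \tfrac{p-1}{2}\sqrt{2/(p+1)}\,(x+b)$, which is exactly \eqref{eqSol2}. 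The decay of $\rho$ is $(x+b)^{-2/(p-1)}$, and this lies in $L^2$ iff $4/(p-1) > 1$, i.e.\ $p < 5$. This gives nonexistence for $p \ge 5$ and the stated form for $p \in (1,5)$.
\item \textbf{Case $\omega > 0$.} We have $\rho' = -\rho\sqrt{\omega + \tfrac{2}{p+1}\rho^{p-1}}$. Substituting $w = \rho^{-(p-1)/2}$ linearises this to
\[
w' = \tfrac{p-1}{2}\sqrt{\omega w^2 + \tfrac{2}{p+1}}.
\]
Its solutions are $w = \sqrt{2/((p+1)\omega)}\,\sinh\!\big(\tfrac{p-1}{2}\sqrt{\omega}\,x + a\big)$, which recovers \eqref{eqSol1}. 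These decay exponentially, so they lie in $H^1$.
\end{itemize}

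In both nontrivial cases, positivity of $\rho$ at $x = 0$ forces $a > 0$ (respectively $b > 0$). The constant is otherwise fixed by the vertex condition, which explains the dependence on $\H$.

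The main obstacle is justifying the sign choice and ruling out zeros in the separation of variables. I handle this through ODE uniqueness at a zero together with the monotonicity coming from $\rho'' > 0$. Once that is in place, the remaining integrations are routine.
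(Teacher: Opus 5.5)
Your argument is correct, but it takes a different route from the paper for $\omega \ge 0$. The paper checks that \eqref{eqSol1} and \eqref{eqSol2} solve the equation and cites Kaminaga--Ohta \cite[Theorems 1 and 2]{KaOh09} for uniqueness. It argues only the case $\omega<0$ directly, using the same sign observation on \eqref{eqDecayHL2} that you use. The case $\omega=0$, $p\ge 5$ is left implicit there, on the grounds that \eqref{eqSol2} is not in $L^2$.

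You instead give a self-contained classification from two first integrals:
\begin{itemize}
\item the conserved angular momentum $\operatorname{Im}(\overline{\phi}\phi')=0$ justifies the reduction ``up to a complex phase shift'', which the paper takes for granted;
\item the energy identity plus ODE uniqueness at a zero gives the dichotomy that $\phi\equiv 0$ or $\phi$ never vanishes. This is exactly item (4) of Proposition~\ref{prpPropert}, which the paper deduces from this proposition rather than proving;
\item the substitution $w=\rho^{-(p-1)/2}$ turns the first-order equation into one that integrates explicitly, producing both the profiles and the $L^2$ threshold $p<5$.
\end{itemize}
This buys independence from an external reference written for the whole line with a $\delta$-potential, which would need some adaptation to half-lines. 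The paper's route is shorter.

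Two points of wording could be tightened. First, the equation for $w$ is not linear; it is autonomous with a globally Lipschitz right-hand side. That uniqueness, together with $\sinh$ being onto $\R$, is what makes the $\sinh$ family exhaustive. Second, $a>0$ and $b>0$ follow from $w(0)=\rho(0)^{-(p-1)/2}$ being finite and positive. Here $\rho(0)>0$ comes from your zero-exclusion argument extended to $x=0$, since the energy identity holds up to the endpoint because $\phi\in H^2$ near $0$.
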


\begin{proof}
For the cases $\omega >0$, and $\omega = 0$, one can verify directly that \eqref{eqSol1} and \eqref{eqSol2} are solutions to \eqref{eqNls} and their uniqueness
follows from \cite[Theorem $1$ and Theorem $2$]{KaOh09}.

Now, suppose $\omega < 0$ and assume that \eqref{eqNls} admits a solution $\psi$. Then, from \eqref{eqDecayHL} and \eqref{eqDecayHL2}, there exists $R >0$ big enough such that for any $x \in e$ such that $x >R$, we have 
\begin{equation*}
     |\psi_e'(x)|^2 =  |\psi_e(x)|^2 \left(\omega + \frac{2|\psi_e(x)|^{p-1}}{p+1}\right) < 0,
\end{equation*}
which leads to a contradiction.
\end{proof}

We also have an upper bound on $\omega$ in \eqref{eqNls} for the existence of nontrivial solutions.

\begin{proposition}\label{prpOmegaUpBnd}
    If $\omega \geq -l_{\H}$ where $l_{\H}$
    is defined in \eqref{eqLGamOm},
    then the only solution to \eqref{eqNls} is $\psi \equiv 0$. 
\end{proposition}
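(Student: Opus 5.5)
The argument is a direct computation from the Nehari-type identity \eqref{eqNeh1} of Proposition~\ref{prpPropert}, combined with the variational definition \eqref{eqLGamOm} of $l_{\H}$.

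Let $\psi \in \DQH$ be a solution of \eqref{eqNls} with $\omega \ge -l_{\H}$. Testing the equation against $\psi$ gives, by \eqref{eqNeh1},
\begin{equation*}
    Q_{\H}(\psi) + \omega \| \psi \|_{L^2(\G)}^2 + \| \psi \|_{L^{p+1}(\G)}^{p+1} = 0 .
\end{equation*}
This identity is legitimate: elements of $\DQH$ solving \eqref{eqNls} are critical points of $S_\omega$, so one may take the duality pairing with $\psi$ itself.

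Next, by the definition \eqref{eqLGamOm} of $l_{\H}$ and homogeneity of the quadratic form, we have $Q_{\H}(\psi) \ge l_{\H} \| \psi \|_{L^2(\G)}^2$ for every $\psi \in \DQH$. Inserting this into the identity yields
\begin{equation*}
    0 \ge (l_{\H} + \omega)\| \psi \|_{L^2(\G)}^2 + \| \psi \|_{L^{p+1}(\G)}^{p+1} \ge \| \psi \|_{L^{p+1}(\G)}^{p+1} \ge 0 ,
\end{equation*}
where the middle inequality uses $\omega + l_{\H} \ge 0$.

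Hence $\| \psi \|_{L^{p+1}(\G)} = 0$, so $\psi = 0$ almost everywhere. Since $\psi$ is continuous on each edge, $\psi \equiv 0$.

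There is no genuine obstacle. The only point needing care is that the bound $Q_{\H} \ge l_{\H}\|\cdot\|_{L^2}^2$ holds on the whole form domain $\DQH$, not merely on $\mathcal D(\H)$. This is immediate here because $l_{\H}$ is defined as an infimum over $\DQH$. Note also that the borderline case $\omega = -l_{\H}$ is handled directly by the strict positivity of the $L^{p+1}$ term, without any spectral argument about eigenfunctions.
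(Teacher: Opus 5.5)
Your proof is correct and follows essentially the same route as the paper. The paper also combines the Nehari identity \eqref{eqNeh1} with $Q_{\H}(\psi) \ge l_{\H}\|\psi\|_{L^2(\G)}^2$ to get $0 \ge \|\psi\|_{L^{p+1}(\G)}^{p+1}$, which forces $\psi \equiv 0$.
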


\begin{proof}
    Suppose that $\psi$ is a solution to \eqref{eqNls}.
    If $\omega \geq -l_{\H}$ then by \eqref{eqNeh1}, we have
    \begin{equation}
        0 \geq (\omega + l_{\H}) \| \psi \|_{L^2(\G)}^2
        + \| \psi \|_{L^{p+1}(\G)}^{p+1}
        \geq \| \psi \|_{L^{p+1}(\G)}^{p+1},
    \end{equation}
    implying $\psi \equiv 0$. 
\end{proof}

\section{Energy ground states}
\label{secEGS}

In this section, we analyze the minimization problem \eqref{eqEnMin} through a sequence of steps. We begin by showing that the energy admits a lower bound for any mass $\mu$. We then investigate the properties of the function $\mu \mapsto \tau_{\mu}$, proving in particular that minimizers do not exist for all values of $\mu$ in the noncompact case, with the outcome depending on the exponent $p$, $\mathcal{G}$, and $\H$. Since we work here with a general operator \(\H\), only partial results can be obtained, as the behavior may strongly depend on the vertex conditions. However, further in this section, we demonstrate that if $\H$ has only $\delta$-vertex conditions, then we can give a full picture of existence, both when $p \geq 5$ and when $p < 5$, also assuming that $|\mathcal{V}| = 1$ in this case. Finally, we show that in the small mass regime, energy ground states can be obtained as bifurcations from linear ground states.

\subsection{General case}

We start by establishing a lower bound on $\tau_\mu$. To this end, we introduce the global (unconstrained) minimization problem
\begin{equation} \label{eqEnMinGlob}
    \tau_{\min} = \inf \left\{ E_{\H}(\psi) \mid \psi \in \DQH \right\}.
\end{equation}
As will become clear, this problem does not always admit a minimizer. In fact, we will show that the behavior of the map $\mu \mapsto \tau_\mu$ defined in \eqref{eqEnMin} is heavily related to the existence of such a global minimizer. We begin by proving that the problem is indeed bounded from below.

\begin{lemma}\label{lemMinGlob}
     If $l_{\H} \geq 0$, then $\tau_{\operatorname{min}} = 0$, while if $l_{\H} < 0$, then $0 > \tau_{\operatorname{min}} > -\infty$.
     In particular, for any $\mu > 0$, $\tau_{\mu} \geq \tau_{\operatorname{min}} >-\infty$. 
\end{lemma}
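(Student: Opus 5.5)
Both cases follow from comparing the quadratic part with the nonlinear part. We first use the definition of $l_{\H}$ to get a lower bound, and then the finiteness of the negative part via Gårding.

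\emph{Case $l_{\H}\ge 0$.} For every $\psi\in\DQH$ we have $Q_{\H}(\psi)\ge l_{\H}\|\psi\|_{L^2(\G)}^2\ge 0$, and the nonlinear term is nonnegative. Hence $E_{\H}(\psi)\ge 0$, with equality for $\psi=0$, so $\tau_{\min}=0$.

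\emph{Case $l_{\H}<0$, strict negativity.} Since $l_{\H}<0$ and the graph is noncompact, $l_{\H}$ is an eigenvalue (Section~\ref{secPrel}). Take a normalized eigenfunction $\phi_1\in E_-$ and set $\psi=t\phi_1$ with $t>0$. By Lemma~\ref{lemEquivNorms},
\[
E_{\H}(t\phi_1)=\tfrac{t^2}{2}l_{\H}+\tfrac{t^{p+1}}{p+1}\|\phi_1\|_{L^{p+1}(\G)}^{p+1}\le \tfrac{t^2}{2}l_{\H}+\tfrac{t^{p+1}}{p+1}\Gamma^{p+1}.
\]
Since $p>1$, the right-hand side is negative for small $t$. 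Thus $\tau_{\min}<0$.

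\emph{Case $l_{\H}<0$, lower bound.} This is the main step, since the negative part of $Q_{\H}$ must be dominated by the $L^{p+1}$ term on a noncompact graph where $L^2$ and $L^{p+1}$ are not comparable. I would use the local structure: the negative contribution comes only from the vertex term $V$. By \eqref{eqTraceGraph} with $\eps$ chosen as in Gårding's inequality, $V(\psi)\ge -C\|\psi'\|^2/2-\ldots$, but it is cleaner to localize to the compact core $\mathcal K$. On each edge, apply the trace inequality on the bounded segment of length $\Llow$ adjacent to each vertex:
\[
|\psi_e(v)|^2\le \eps\|\psi_e'\|_{L^2}^2+K_\eps\|\psi_e\|_{L^2(\text{segment})}^2 .
\]
Choosing $\eps$ small gives
\[
Q_{\H}(\psi)\ge C_1\|\psi'\|^2_{L^2(\G)}-C\|\psi\|^2_{L^2(\mathcal K)}.
\]
Since $\mathcal K$ has finite measure, Hölder gives
\[
\|\psi\|_{L^2(\mathcal K)}^2\le |\mathcal K|^{\frac{p-1}{p+1}}\|\psi\|_{L^{p+1}(\G)}^2 .
\]
Therefore
\[
E_{\H}(\psi)\ge -C' s^2+\tfrac{1}{p+1}s^{p+1},\qquad s=\|\psi\|_{L^{p+1}(\G)} .
\]
Because $p+1>2$, this function of $s$ is bounded below, so $\tau_{\min}>-\infty$. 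The only care needed is that the trace estimate uses segments of length $\Llow$, which are contained in $\mathcal K$; this is why the compact core was defined with the truncations $K_e$.

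Finally, $\tau_\mu\ge\tau_{\min}$ holds trivially, since $\tau_\mu$ is an infimum over a smaller set.
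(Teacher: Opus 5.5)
Your proof is correct and follows essentially the same route as the paper. A localized trace estimate on the length-$\Llow$ segments at each vertex bounds $V(\psi)$ by $\eps\|\psi'\|_{L^2(\G)}^2$ plus local $L^2$ mass, which the $L^{p+1}$ term then absorbs: you use one Hölder inequality on $\mathcal K$ and bound $-C's^2+\frac{s^{p+1}}{p+1}$ from below, whereas the paper uses Hölder and Young segment by segment. Negativity comes from $E_{\H}(t\phi)<0$ for small $t$ (your appeal to noncompactness to get an eigenfunction is unnecessary, since any $\phi$ with $Q_{\H}(\phi)<0$ works).
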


\begin{proof}
    Let $\psi \in \DQH$. Let $v \in \mathcal{V}$ and $e \in J_{v}$, identified with $[0, A_e]$ for some $A_e >0$ or $[0, \infty)$.
    By the definition of $\Llow$ in \eqref{eqMinLnght}, it follows that $[0, \Llow] \subset e$.
    Then for any $x \in [0, \Llow]$ we have
    \begin{equation*}
        | \psi_e (0) |^2 = |\psi_e(x)|^2 - \int_0^{x} \partial_y |\psi_e(y)|^2dy \leq |\psi_e(x)|^2 + 2 \int_0^{x} |\psi_e'(y) \psi_e(y)|dy.
    \end{equation*}
    It follows after integration in $x$ and Young inequality that for any $\eps >0$, there exists $C_\eps >0$ depending on $\eps$ such that
    \begin{equation}\label{eqLocalGN}
        | \psi_e (0) |^2 \lesssim \|\psi_e\|^2_{L^2{(0,\Llow)}}  + \|\psi_e\|_{L^2{(0,\Llow)}}\|\psi_e'\|_{L^2{(0,\Llow)}}
        \leq C_{\eps} \| \psi_e \|_{L^2(0,\Llow)}^2 + \eps \| \psi_e' \|_{L^2(0,\Llow)}^2.
    \end{equation}
    Repeating this computation on every vertex and edge, we get
    \begin{align*}
        \sum_{v \in \mathcal{V}}  \langle \Lambda_v P_{R,v} \Psi(v), P_{R,v} \Psi(v) \rangle_{\R^{d_v}}
        &\lesssim \sum_{v\in\mathcal{V}} \sum_{e,e' \in J_v}|\psi_e(v)||\psi_{e'}(v)| \\
        &\lesssim \sum_{v \in \mathcal{V}} \sum_{e \in J_v}| \psi_e(v) |^2
        \leq \sum_{v\in \mathcal{V}} \sum_{e \in J_v} C_\eps \| \psi_e \|_{L^2(v,v+L)}^2 + \eps \| \psi_e' \|_{L^2(v,v+L)}^2,
    \end{align*}
    were we slightly abuse notations identifying each edge $e\in J_{v}$ with
    an interval $[v, v+A_e]$ for some $A_e >0$ and $[v, \infty)$. 
    By Hölder's and Young's inequalities, there exists $R = R(L,\eps) > 0$ such that
    \[
        C_\eps \| \psi_e \|_{L^2(v,v+L)}^2 \leq \frac{1}{p+1} \| \psi_e \|_{L^{p+1}(v,v+L)}^{p+1} + R.
    \]
    Since $|\mathcal{V}| < \infty$ and $|\mathcal{E}| <\infty$ and recalling that the quadratic form is defined by \eqref{eqQuadFormDef}, there exists $K = K(|\mathcal{V}|, |\mathcal{E}|) > 0$ such that 
    \begin{equation*}
        E_{\H}(\psi) \geq \left( \frac{1}{2} - \eps \right) \| \psi' \|_{L^2(\G)}^2 - KR \geq  - KR
    \end{equation*}
    which yields the result, taking an arbitrary $\eps \in (0, 1/2).$ 

    It remains to prove that $\tau_{\operatorname{min}} < 0$ when $l_{\H} <0$. Let $\phi$ be such that $\| \phi \|_{L^2(\G)} = 1$ and $Q_{\H}(\phi) = l_{\H}$.
    Then for any $t >0$, we have 
    \begin{equation*}
        E_{\H} (t \phi) = \frac{t^2}{2} l_{\H} + \frac{t^{p+1}}{p+1} \|\phi \|_{L^{p+1}(\G)}^{p+1}.
    \end{equation*}
    In particular, for $t$ small enough, we have $ E_{\H} (t \phi) <0$. 
\end{proof}

\begin{remark}
    On compact graphs, the previous lower bound implies that energy ground states exist for all masses.
    Moreover, if $\G$ is not compact and $l_{\H} = 0$, then one has $\tau_\mu \ge 0$ for all $\mu > 0$,
    and considering functions of the form $u_n = n^\frac{1}{2} u(nx)$,
    compactly supported on a fixed half-line,
    shows that in fact $\tau_\mu = 0$ for all $\mu > 0$,
    so that energy ground states do not exist.
\end{remark}

In what follows, we focus on the case $l_{\H} < 0$ and $|\mathcal{E}_{\operatorname{inf}}| > 0$. In this setting, the existence of an energy minimizer is not guaranteed due to the lack of compactness of the embedding $H^1(\G) \hookrightarrow L^2(\G)$. One must exploit the concentration-compactness theorem adapted for graphs, see \cite[Lemma 3.7]{CaFiNo17}. Out of the three noncompactness scenarios, vanishing and escaping will be quickly ruled out, as we will show that the minimal levels of the energy are always negative for any positive mass. But, a minimizing sequence for $\tau_{\mu}$ may undergo the dichotomy scenario. In particular, some mass can be expelled to infinity in the weak limit.

To determine whether such behavior occurs, we begin by analyzing general properties of the function $\mu \mapsto \tau_\mu$. Indeed, if this function were strictly decreasing, the dichotomy scenario would be energetically unfavorable, since any loss of mass would lead to an increase in energy. However, such a strong monotonicity cannot be guaranteed in general, and we can only assert that the function is non-increasing.

\begin{lemma}\label{lemContMon}
    Let $|\mathcal{E}_{\operatorname{inf}}| >0$ and $l_{\H} <0$.
    Then the function $\mu \mapsto \tau_{\mu}$ is continuous, non-increasing, and $\tau_{\mu} < 0$ for any $\mu > 0$. 
\end{lemma}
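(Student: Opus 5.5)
We prove the three properties in the order negativity, monotonicity, continuity, since each later step uses the earlier ones.

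\emph{Negativity.} Let $\phi$ be a normalized eigenfunction for $l_{\H}$, as in the proof of Lemma~\ref{lemMinGlob}. Then
\[
\tau_\mu \le E_{\H}(\mu\phi) = \frac{\mu^2}{2} l_{\H} + \frac{\mu^{p+1}}{p+1}\|\phi\|_{L^{p+1}(\G)}^{p+1}.
\]
This is negative for small $\mu$, so it only gives $\tau_\mu<0$ for small masses. Negativity for all $\mu$ will follow once monotonicity is proved.

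\emph{Monotonicity.} We use the noncompactness of $\G$. Fix $\mu<\mu'$ and $\eps>0$, and pick $\psi\in\DQH$ with $\|\psi\|_{L^2(\G)}=\mu$ and $E_{\H}(\psi)\le\tau_\mu+\eps$. By density we may assume $\psi$ is compactly supported on each half-line; a cutoff changes the energy only slightly and keeps the vertex conditions, which only involve vertex values. Fix a half-line $e$. Place on $e$, far from the support of $\psi$, a spread-out bump
\[
w_R(x)=R^{-1/2}\,w\bigl((x-x_R)/R\bigr), \qquad \|w\|_{L^2}=1,
\]
which vanishes near the vertex. Then $\|w_R'\|_{L^2}^2 = O(R^{-2})$ and $\|w_R\|_{L^{p+1}}^{p+1} = O(R^{-(p-1)/2})$, both tending to $0$. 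Set $\tilde\psi=\psi+\sqrt{\mu'^2-\mu^2}\,w_R$. Since the supports are disjoint, the mass is exactly $\mu'$ and $E_{\H}(\tilde\psi)\to E_{\H}(\psi)$ as $R\to\infty$. Hence $\tau_{\mu'}\le\tau_\mu+\eps$, and letting $\eps\to0$ gives $\tau_{\mu'}\le\tau_\mu$. Combined with the small-mass bound, this yields $\tau_\mu<0$ for every $\mu>0$, because for any $\mu$ we can compare with a smaller mass where negativity is already known.

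\emph{Continuity.} This is the delicate step; we prove upper and lower semicontinuity separately.

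Upper semicontinuity at $\mu$: given a near-minimizer $\psi$ of mass $\mu$, the scaled functions $(\nu/\mu)\psi$ have mass $\nu$ and energy depending continuously on $\nu$. So $\limsup_{\nu\to\mu}\tau_\nu\le\tau_\mu$.

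Lower semicontinuity: take $\mu_n\to\mu$ and $\psi_n$ with $\|\psi_n\|_{L^2(\G)}=\mu_n$ and $E_{\H}(\psi_n)\le\tau_{\mu_n}+1/n$. The energies are bounded above by $0$ and the masses are bounded, so Proposition~\ref{prpBddEnergy} shows $(\psi_n)$ is bounded in $H^1(\G)$. Rescale $\phi_n=(\mu/\mu_n)\psi_n$, which has mass $\mu$. The quadratic part gets multiplied by $(\mu/\mu_n)^2\to1$ and the $L^{p+1}$ part by $(\mu/\mu_n)^{p+1}\to1$. Because $Q_{\H}(\psi_n)$ and $\|\psi_n\|_{L^{p+1}(\G)}^{p+1}$ stay bounded, we get $E_{\H}(\phi_n)-E_{\H}(\psi_n)\to0$. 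Therefore
\[
\tau_\mu\le E_{\H}(\phi_n)=E_{\H}(\psi_n)+o(1)\le\tau_{\mu_n}+o(1),
\]
which gives $\tau_\mu\le\liminf_n\tau_{\mu_n}$.

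At $\mu=0$ we have $\tau_0=0$, and the same estimate shows $\tau_\mu\to0$: indeed $E_{\H}\ge -C_2\mu^2/2$ by Gårding's inequality (Lemma~\ref{lemGarding}), so $0>\tau_\mu\ge -C_2\mu^2/2$. The main obstacle is the uniform $H^1$ bound along near-minimizing sequences, and Proposition~\ref{prpBddEnergy} supplies it.
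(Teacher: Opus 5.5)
Your proof is correct and follows the paper's route. Continuity comes from rescaling $\psi\mapsto(\mu/\mu_n)\psi$ together with the uniform $H^1$ bound of Proposition~\ref{prpBddEnergy}, monotonicity from a spread-out bump pushed to infinity along a half-line, and negativity from the test function $\mu\phi$ at small mass combined with monotonicity. Your ordering (negativity and monotonicity before continuity) makes explicit the upper bound on the energies that Proposition~\ref{prpBddEnergy} requires, and your amplitude $\sqrt{\mu'^2-\mu^2}$ makes the mass come out exactly $\mu'$; the paper's write-up treats both points loosely.
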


\begin{proof}
    We start by showing the continuity. Let $\mu >0$ and $(\mu_n)_n \subseteq \R^+$ such that $\mu_n \to \mu$ as $n\to \infty$. Let $(u_n)_n \subseteq  H^1(\G)$ be such that $\| u_n \|_{L^2(\G)} = \mu_n$ and 
    \begin{equation*}
        E_{\H}(u_n) \leq \tau_{\mu_n} + \eps
    \end{equation*}
    for some $\eps >0$. Let $v_n = \frac{\mu}{\mu_n} u_n$. 
    Then 
    \begin{equation*}
        E_{\H}(v_n) = \frac{\mu^2}{\mu_n^2} E_{\H}(u_n) + \left( \frac{\mu^{p+1}}{\mu_n^{p+1}}-\frac{\mu^2}{\mu_n^2}   \right) \frac{\| u_n \|_{L^{p+1}(\G)}^{p+1}}{p+1}  
    \end{equation*}
    Using Proposition~\ref{prpBddEnergy}, we deduce that $(u_n)_n$ is uniformly bounded in $H^1(\G)$.
    Hence, there exists $C >0$ such that
    \begin{equation*}
          \tau_{\mu} \leq E_{\H}(v_n) \leq \frac{\mu^2}{\mu_n^2} (\tau_{\mu_n} + \eps) +  \left( \frac{\mu^{p+1}}{\mu_n^{p+1}}-\frac{\mu^2}{\mu_n^2}   \right) C.
    \end{equation*}
    Since $\eps$ is arbitrary, taking the limit $n\to \infty$ above implies 
    \begin{equation*}
         \tau_{\mu} \leq \liminf_{n\to \infty} \tau_{\mu_n}.
    \end{equation*}
    On the other hand, let $u$ be such that $\| u \|_{L^2(\G)} = \mu$.
    Let $w_n \coloneqq \frac{\mu_n}{\mu} u$. Then 
    \begin{equation*}
        \tau_{\mu_n} \leq  E_{\H}(w_n) = \frac{\mu_n^2}{\mu^2} E_{\H}(u) + \left( \frac{\mu_n^{p+1}}{\mu^{p+1}}-\frac{\mu_n^2}{\mu^2}   \right) \frac{\| u \|_{L^{p+1}(\G)}^{p+1}}{p+1}  
    \end{equation*}
    which implies that
    \begin{equation*}
        \limsup_{n\to \infty} \tau_{\mu_n} \leq  E_{\H}(u).
    \end{equation*}
    Taking the infimum over all $u \in H^1(\G)$ with $\| u \|_{L^2(\G)} = \mu$ we obtain the continuity.

    We observe that there exists $\mu_* >0$
    such that $\tau_{\mu} <0$ for any $\mu \in (0, \mu_*)$.
    Indeed, if $\phi$ is such that $\| \phi \|_{L^2(\G)} = 1$
    and $Q_{\H}(\phi) = l_{\H}$, then for $\mu$ small enough,
    $\tau_\mu \le E_{\H}(\mu\phi) <0 $.

    Now we show the monotonicity. Let $e \in \mathcal{E}_{\operatorname{inf}}$ identified with $[0,\infty)$.
    Let $u \in H^1(\G)$
    be such that $\| u \|_{L^2{(\G)}} = \mu$ and $u_e$ is compactly supported.
    Let $v_n = (v_{n, e})_{e \in \mathcal{E}}$ such that $v_n \equiv 0$ on $\mathcal{E} \setminus \{ e \}$
    and $v_{n,e} \in H^1(e) \cap C_c (e)$ is given by
    \begin{equation*}
        v_{n, e} (x) = 
        \begin{cases}
            0   & \text{ if } x \le n^2-1 \text{ or } 3n^2 + 1 < x,\\
            c_n(x - n^2 + 1)    & \text{ if } n^2-1 < x \le n^2,\\
            c_n & \text{ if } n^2 < x \le 3n^2, \\
            c_n(-x + 3n^2 + 1) & \text{ if } 3n^2 < x \le 3n^2 + 1,\\
        \end{cases}
    \end{equation*}
    for $c_n$ chosen so that $\| v_n \|_{L^2(\G)} = \nu$ for some $\nu > 0$. It is clear that $c_n$  is proportional to $\nu n^{-1}$. Let $y_n > 0$ be such that $\supp(u_e) \cap \supp (v_{n, e} (\cdot -y_n)) = \emptyset$ for any $n$. Finally, let $w_n = (w_{n, e})_{e \in \mathcal{E}}$ such that $w_n \coloneqq u$ on $\mathcal{E} \setminus \{ e \}$ and $w_{n, e} \coloneqq u_e + v_{n, e} (\cdot - y_n)$. Then
    \begin{equation*}
        \tau_{\mu + \nu} \leq E_{\H} (w_n) = E_{\H}(u) + E_{\H}(v_n).
    \end{equation*}
    Since $E_{\H}(v_n) \to 0$ as $n \to \infty$,
    we obtain $\tau_{\mu + \nu} \leq E_{\H}(u)$ by taking limits.
    Taking the infimum over all functions
    $u \in H^1(\G)$ compactly supported on $e$ such that
    $\| u\|_{L^2{(\G)}} = \mu$ and using the dense inclusion $H^1(\G) \cap C_c (e) \hookrightarrow H^1 (e)$, we get
    \begin{equation*}
        \tau_{\mu + \nu} \leq \tau_{\mu}.
    \end{equation*}
    Since $\tau_{\mu} <0$ for $\mu \in (0,\mu_*)$ and $\mu \mapsto \tau_\mu$ is non-increasing, we conclude that $\tau_{\mu} <0$ for all $\mu >0$.
\end{proof}

\begin{remark}\label{rmkMonComp}
    If $|\mathcal{E}_{\operatorname{inf}}| =0$ and $l_{\H} <0$, only the continuity part of Lemma \ref{lemContMon} is true. Indeed, let $\psi = (\psi_e)_{e \in \mathcal{E}} \in H^1(\G)$ be any function. Let $e$ be any edge, which we identify with $[0, M_e]$ for some $M_e >0$. Then by \eqref{eqLocalGN}, for any $\eps>0$, there exists $C_e = C_e (\eps) > 0$ such that
    \begin{equation*}
        |\psi_e(0)|^2 +  |\psi_e(M_e)|^2 \leq C_e \| \psi_e \|_{L^2([0,M_e])}^2 + \eps \| \psi_e \|_{L^2([0,M_e])}^2.
    \end{equation*}
    On the other hand, by H\"older's and Young's inequalities, there exists $K_e > 0$ such that 
    \begin{equation*}
        \| \psi_e \|_{L^2([0,M_e])}^{p+1} \leq K_e \| \psi_e \|_{L^{p+1}([0,M_e])}^{p+1}.
    \end{equation*}
    Thus, by summing on all edges, we obtain that there exists $C = C (\eps) > 0$ and $K >0$ such that
    \begin{equation*}
          E_{\H}(\psi) \geq \left(\frac{1}{2} - \eps\right) \| \psi' \|_{L^2(\G)}^2 - C \| \psi \|_{L^2(\G)}^2 + K\| \psi \|_{L^{2}(\G)}^{p+1}.
    \end{equation*}
    In particular, for $\eps = \frac{1}{4}$ and $\mu = \| \psi \|_{L^2}$ we obtain 
    \begin{equation} \label{eqMonComp}
          E_{\H}(\psi) \geq - C \mu^2 + \frac{1}{p+1} \mu^{p+1}.
    \end{equation}
    that is, $\tau_\mu \to +\infty$ as $\mu \to \infty$. 
\end{remark}

We now show that a minimizer exists for all values of $\mu$ within an interval where the function $\mu \mapsto \tau_\mu$ is strictly decreasing. On the other hand, no conclusion can yet be drawn regarding the existence of minimizers in intervals where $\tau_\mu$ is constant.

\begin{lemma}\label{lemExistMin}
     Let $|\mathcal{E}_{\operatorname{inf}}| >0$ and $l_{\H} <0$.
     If $\tilde \mu > 0$ satisfies
     $\tau_{\tilde \mu} < \tau_\mu$ for all $\mu \in [0,\tilde \mu)$, then $\tau_{\tilde \mu}$ admits a minimizer.
\end{lemma}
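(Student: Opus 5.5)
We take a minimizing sequence $(u_n)_n \subseteq \DQH$ with $\|u_n\|_{L^2(\G)} = \tilde\mu$ and $E_{\H}(u_n) \to \tau_{\tilde\mu}$, and show it converges (up to a subsequence) to a minimizer. By Proposition~\ref{prpBddEnergy} the sequence is bounded in $H^1(\G)$. Proposition~\ref{prpStrongCon} then gives a weak limit $u \in \DQH$ with $V(u_n) \to V(u)$ along a subsequence. By local compactness of $H^1 \hookrightarrow L^2_{loc}$ we may also assume $u_n \to u$ in $L^2$ and $L^\infty$ on the compact core $\mathcal K$. Set $m \coloneqq \|u\|_{L^2(\G)} \le \tilde\mu$ and $r_n \coloneqq u_n - u$, so that $r_n \rightharpoonup 0$ in $H^1(\G)$ and $\|r_n\|_{L^2}^2 \to \tilde\mu^2 - m^2$.

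The first step rules out $m = 0$. If $u = 0$, then $V(u_n) \to 0$, hence $\liminf E_{\H}(u_n) \ge \liminf \bigl(\tfrac12\|u_n'\|^2 + \tfrac1{p+1}\|u_n\|_{p+1}^{p+1}\bigr) \ge 0$. This contradicts $\tau_{\tilde\mu} < 0$ from Lemma~\ref{lemContMon}.

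The second step is the energy splitting, which is where most of the work lies. By weak convergence, $\|u_n'\|^2 = \|u'\|^2 + \|r_n'\|^2 + o(1)$. By the Brezis--Lieb lemma (applicable since $u_n \to u$ a.e. after extraction, using local compactness on each edge), $\|u_n\|_{p+1}^{p+1} = \|u\|_{p+1}^{p+1} + \|r_n\|_{p+1}^{p+1} + o(1)$. Combined with $V(u_n) \to V(u)$ and $V(r_n) \to 0$, this gives
\begin{equation*}
    E_{\H}(u_n) = E_{\H}(u) + \tfrac12\|r_n'\|_{L^2(\G)}^2 + \tfrac{1}{p+1}\|r_n\|_{L^{p+1}(\G)}^{p+1} + o(1) \ge E_{\H}(u) + o(1).
\end{equation*}
Therefore $\tau_{\tilde\mu} \ge E_{\H}(u) \ge \tau_m$.

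The third step concludes. If $m < \tilde\mu$, the hypothesis gives $\tau_{\tilde\mu} < \tau_m$, which contradicts the inequality just obtained; note that $m=0$ is also excluded here since $\tau_0 = 0 > \tau_{\tilde\mu}$. Hence $m = \tilde\mu$, so $u$ is admissible and $E_{\H}(u) \le \tau_{\tilde\mu}$; that is, $u$ is a minimizer. As a byproduct, $r_n \to 0$ in $L^2$, and then also in $H^1$ from the splitting.

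The main technical point is to justify the splitting carefully. One must check that the vertex term converges for the full sequence, not just for the remainder. This holds because $V$ depends only on vertex values, and these converge by the compactness of traces on bounded $H^1$ sets, as in Proposition~\ref{prpStrongCon}. Beyond this, the monotonicity hypothesis does all the work, with no need for a full concentration-compactness argument.
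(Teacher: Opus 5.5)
Your proof is correct and follows the same route as the paper: extract a weak limit $u$ with $V(u_n)\to V(u)$, deduce $E_{\H}(u)\le\tau_{\tilde\mu}$, and use the hypothesis $\tau_{\tilde\mu}<\tau_m$ for $m<\tilde\mu$ to force $\|u\|_{L^2(\G)}=\tilde\mu$. The paper uses plain weak lower semicontinuity of $\|u'\|_{L^2(\G)}^2$ and $\|u\|_{L^{p+1}(\G)}^{p+1}$ where you use the Brezis--Lieb splitting, and your separate exclusion of $m=0$ is redundant because $\tau_0=0$ is already covered by the hypothesis; the splitting does, however, give you the extra conclusion that the minimizing sequence converges strongly in $H^1(\G)$.
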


\begin{proof}
    Let $(\psi_n)_n$ be a minimizing sequence
    for $\tau_{\tilde{\mu}}$.
    By Proposition \ref{prpBddEnergy},
    it is uniformly bounded in $H^1(\G)$.
    Hence, by Proposition \ref{prpStrongCon},
    there exists a subsequence,
    still denoted by $(\psi_n)_n$, and $\psi \in \DQH$ such that $\psi_n \rightharpoonup \psi$
    in $H^1(\G)$ and $V(\psi_n) \to V(\psi)$, where $V$ is defined in \eqref{eqLocPart}.
    Thus, by the weak lower-semicontinuity for the $\dot H^1(\G)$ and the $L^{p+1}(\G)$ norms, we have 
    \begin{equation*}
        E_{\H}(\psi) \leq \liminf_{n\to\infty} E_{\H}(\psi_n) = \tau_{\tilde \mu},
    \end{equation*}
    and
    \begin{equation*}
        0 \leq \| \psi \|_{L^2 (\G)} = \nu \leq \tilde \mu.
    \end{equation*}
    But since $\tau_{\nu} \leq E_{\H} (\psi)$
    and by hypothesis $\tau_{\tilde \mu} < \tau_{\nu}$ if $\nu < \tilde \mu$,
    we obtain that $\nu = \tilde \mu$, so that $\psi$ is a minimizer.
\end{proof}
Now, we show that the intervals where the map $\mu \mapsto \tau_{\mu}$
is strictly decreasing correspond to those for which the associated minimizer satisfies equation \eqref{eqNls} with a positive Lagrange multiplier $\omega$. 

\begin{lemma} \label{lemPosMult}
    Let $|\mathcal{E}_{\operatorname{inf}}| >0$ and $l_{\H} <0$.
    Let $\psi \in \DQH$ be a minimizer for $\tau_{\tilde \mu}$ and a  solution to \eqref{eqNls} for some $\omega > 0$. Then, for all $\mu > \tilde \mu$, we have $\tau_\mu < \tau_{\tilde \mu}$.
\end{lemma}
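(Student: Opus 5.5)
Since $\psi$ is a minimizer for $\tau_{\tilde \mu}$ and $\tau_\mu$ is non-increasing by Lemma~\ref{lemContMon}, it is enough to show that $\tau_\mu < \tau_{\tilde\mu}$ for $\mu$ slightly larger than $\tilde\mu$. Monotonicity then propagates the strict inequality to every $\mu > \tilde\mu$: for arbitrary $\mu>\tilde\mu$ we pick $\mu' \in (\tilde\mu,\mu)$ close to $\tilde\mu$ and write $\tau_\mu \le \tau_{\mu'} < \tau_{\tilde\mu}$. The idea is to use $\psi$ itself as a test function and inflate its mass by a scaling, $\psi_s \coloneqq s\psi$ with $s>1$, so that $\|\psi_s\|_{L^2(\G)} = s\tilde\mu$.

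We expand the energy along this curve:
\[
E_{\H}(s\psi) = \frac{s^2}{2} Q_{\H}(\psi) + \frac{s^{p+1}}{p+1}\|\psi\|_{L^{p+1}(\G)}^{p+1} \eqqcolon f(s),
\]
with derivative
\[
f'(s) = s\, Q_{\H}(\psi) + s^{p}\|\psi\|_{L^{p+1}(\G)}^{p+1}.
\]
At $s=1$, the Nehari-type identity \eqref{eqNeh1} from Proposition~\ref{prpPropert} gives
\[
f'(1) = Q_{\H}(\psi) + \|\psi\|_{L^{p+1}(\G)}^{p+1} = -\omega \tilde\mu^2 .
\]
This is strictly negative because $\omega>0$ and $\tilde\mu>0$. (The case $\tilde\mu=0$ is excluded: the statement assumes $\psi$ solves \eqref{eqNls} as a minimizer, so $\psi\neq0$; if $\tilde\mu=0$ were allowed, the claim would already follow from $\tau_\mu<0=\tau_0$ in Lemma~\ref{lemContMon}.) Since $f$ is $C^1$, there is $\delta>0$ with $f(s)<f(1)=\tau_{\tilde\mu}$ for all $s\in(1,1+\delta)$. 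Consequently $\tau_{s\tilde\mu}\le f(s)<\tau_{\tilde\mu}$ for every $s \in (1, 1+\delta)$.

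To pass to all $\mu>\tilde\mu$, take any $\mu>\tilde\mu$ and choose $s\in(1,\min(1+\delta,\mu/\tilde\mu))$. By Lemma~\ref{lemContMon}, $\tau_\mu\le\tau_{s\tilde\mu}<\tau_{\tilde\mu}$. The only point that needs care is checking that the Lagrange multiplier convention matches the sign in \eqref{eqNeh1}: testing \eqref{eqNls} against $\psi$ yields $Q_{\H}(\psi)+\omega\|\psi\|^2+\|\psi\|_{L^{p+1}}^{p+1}=0$, which is exactly the identity used above. Beyond this bookkeeping the argument is elementary, and I do not expect a real obstacle.
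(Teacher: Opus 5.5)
Your proof is correct and is essentially the paper's argument: you test with the dilation $s\psi$ (the paper writes $(1+t)\psi$), use the identity \eqref{eqNeh1} to get that the derivative of the energy at $s=1$ equals $-\omega\tilde\mu^2<0$, and then extend the strict inequality to all $\mu>\tilde\mu$ via the monotonicity in Lemma~\ref{lemContMon}. One small remark: $\psi\equiv 0$ solves \eqref{eqNls} for every $\omega$, so being a solution does not by itself force $\tilde\mu>0$, but your fallback $\tau_\mu<0=\tau_0$ covers that degenerate case anyway.
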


\begin{proof}
    For any $t > 0$, observe that $\| (1 + t) \psi \|_{L^2(\G)} = (1 + t) \tilde{\mu}$ while 
    \begin{equation*}
         E_{\H} ((1 + t) \psi)
         = \frac{(1 + t)^2}{2} Q_{\H} (\psi) + \frac{(1 + t)^{p+1}}{p+1} \| \psi \|_{L^{p+1} (\G)}^{p+1}.
        \end{equation*}
        Consequently, by \eqref{eqNls}, it follows that
        \begin{align*}
            \frac{d}{dt} E_{\H} ((1 + t) \psi )
            & = (1 + t) Q_{\H} (\psi) + (1 + t)^{p}  \| \psi \|_{L^{p+1} (\G)}^{p+1} \\
            & = -\omega \| \psi \|_{L^{2} (\G)}^{2} + t Q_{\H} (\psi) + \left( (1 + t)^{p} - 1 \right) \| \psi \|_{L^{p+1}(\G)}^{p+1}.
        \end{align*}
        Thus, as $t \to 0$, we obtain 
        \begin{equation*}
            \frac{d}{dt} E_{\H} ((1 + t) \psi)
            = - \omega \tilde \mu + o (t),
        \end{equation*}
        which implies that there exists $t^* >0$ such that for $t \in (0, t^*)$, 
        \begin{equation*}
            \tau_{(1 + t) \tilde \mu}
            \leq  E_{\H} ((1 + t) \psi)
            <  E_{\H} (\psi)
            = \tau_{\tilde \mu}.
        \end{equation*}
        The result follows from Lemma \ref{lemContMon}.
\end{proof}

\begin{corollary}\label{rmkOmegaZero}
    Let $|\mathcal{E}_{\operatorname{inf}}| >0$ and $l_{\H} <0$.
    Let $u$ be a minimizer for $\tau_{\tilde \mu}$ with $ \tilde \mu >0$. If there exists $\mu > \tilde \mu$ satisfying $\tau_{\tilde \mu} = \tau_\mu$, then $u$ is a solution to \eqref{eqNls} for some $\omega \leq 0$.
\end{corollary}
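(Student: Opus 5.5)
This is the contrapositive of Lemma~\ref{lemPosMult}, combined with the fact that a constrained minimizer solves \eqref{eqNls} with a real Lagrange multiplier. The plan is to show that $u$ solves \eqref{eqNls} for some $\omega \in \R$, and then rule out $\omega > 0$ using the monotonicity of $\mu \mapsto \tau_\mu$.

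First, we check that $u$ is a critical point of $E_{\H}$ on the constraint set. The energy $E_{\H}$ is $C^1$ on $\DQH$ (viewed as a real Hilbert space with the $H^1$ structure), and so is the mass $\|\cdot\|_{L^2(\G)}^2$. Since $\tilde\mu>0$, $u\neq 0$ and the derivative of the mass at $u$, namely $2\langle u,\cdot\rangle_{L^2}$, is nonzero. So the constraint is a regular $C^1$ manifold near $u$. By the Lagrange multiplier rule there is $\omega\in\R$ with
\[
E_{\H}'(u) + \omega\langle u, \cdot\rangle_{L^2(\G)} = 0,
\]
that is, $u$ is a critical point of $S_\omega$. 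As noted after \eqref{eqAcIntr}, this means $u$ is a solution of \eqref{eqNls}. The same conclusion appears in the paper's earlier remark that every minimizer of \eqref{eqEnMin} satisfies \eqref{eqNls} with a multiplier $\omega\in\R$; realness also follows from \eqref{eqNeh1}.

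Second, we argue by contradiction that $\omega \le 0$. Suppose $\omega>0$. Then $u$ is a minimizer for $\tau_{\tilde\mu}$ solving \eqref{eqNls} with positive frequency, so Lemma~\ref{lemPosMult} gives $\tau_\mu<\tau_{\tilde\mu}$ for every $\mu>\tilde\mu$. This contradicts the assumed existence of some $\mu>\tilde\mu$ with $\tau_\mu=\tau_{\tilde\mu}$. Hence $\omega\le 0$.

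There is no real obstacle here. The only point needing care is justifying the Lagrange multiplier step, which requires $u\ne0$ (guaranteed by $\tilde\mu>0$) and the differentiability of the $L^{p+1}$ term on $H^1(\G)$, which follows from the embedding $H^1(\G) \hookrightarrow L^{p+1}(\G)$.
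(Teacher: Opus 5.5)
Your proposal is correct and is exactly the argument the paper intends. The corollary is stated without a separate proof immediately after Lemma~\ref{lemPosMult}, as its contrapositive, with the real frequency $\omega$ supplied by the Lagrange multiplier rule for the constrained minimizer $u\neq 0$; your justification of that step (regularity of the mass constraint at $u\neq 0$ and $C^1$-smoothness of $E_{\H}$ on $\DQH$) simply spells out what the paper leaves implicit.
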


In particular, the intervals $I$ of mass values $\mu$ for which the minimization problem defining $\tau_{\mu}$ may fail to admit a minimizer are necessarily of the form $I = (\mu_I, \mu_F)$, $I = (\mu_I, \mu_F]$ or $I = (\mu_I, \infty)$ for some initial mass $\mu_I > 0$. At this threshold, a minimizer $u_I$ exists for $\tau_{\mu_I}$ and satisfies the equation \eqref{eqNls} with a non-positive Lagrange multiplier, i.e., $\omega \leq 0$. Moreover, the function $\mu \mapsto \tau_{\mu}$ is constant on $I$. This is a crucial structural feature of the problem: as long as the minimizer corresponds to a strictly positive $\omega$, the strictly decreasing nature of $\mu \mapsto \tau_\mu$ ensures compactness and thus existence. In contrast, once $\omega$ becomes non-positive, the function $\mu \mapsto \tau_\mu$ may become constant, which opens the possibility of dichotomy and loss of compactness, hence the non-attainment of the infimum.

In the next proposition, we show that there exists an initial interval $[0,\mu_{1})$ such that, for any $\mu \in [0,\mu_{1})$, the minimum defining $\tau_\mu$ is attained. 
In particular, the function $\mu \mapsto \tau_\mu$ is strictly decreasing on this interval.

\begin{proposition}\label{prpMuOne}
Let $|\mathcal{E}_{\operatorname{inf}}| >0$ and $l_{\H} <0$. Then there exists $\mu_1 >0$ such that a minimizer for $\tau_\mu$ exists for any $\mu \in (0,\mu_1]$.    
\end{proposition}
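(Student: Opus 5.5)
The plan is to reduce the statement to Lemma~\ref{lemExistMin}: it suffices to find $\mu_1>0$ such that $\mu\mapsto\tau_\mu$ is strictly decreasing on $[0,\mu_1]$. To get strict monotonicity I will compare $\tau_\mu$ with the linear upper bound. Let $\phi$ be a normalized eigenfunction with $Q_{\H}(\phi)=l_{\H}$. Then
\[
\tau_\mu\le E_{\H}(\mu\phi)=\tfrac{\mu^2}{2}l_{\H}+\tfrac{\mu^{p+1}}{p+1}\|\phi\|_{L^{p+1}(\G)}^{p+1}.
\]
On the other hand, $Q_{\H}\ge l_{\H}\|\cdot\|_{L^2}^2$ gives $\tau_\mu\ge \frac{\mu^2}{2}l_{\H}$. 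Thus $\tau_\mu=\frac{\mu^2}{2}l_{\H}+O(\mu^{p+1})$, which already suggests strict decrease for small $\mu$.

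A direct comparison of two values at nearby masses from these two-sided bounds is not quite enough. Instead I would argue by a minimizing sequence combined with Lagrange multipliers, as follows.

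Fix $\tilde\mu$ small and take a minimizing sequence $(\psi_n)$ for $\tau_{\tilde\mu}$. It is bounded in $H^1$ by Proposition~\ref{prpBddEnergy}, so we may pass to a weak limit $\psi$ with $V(\psi_n)\to V(\psi)$ by Proposition~\ref{prpStrongCon}. Write $\nu=\|\psi\|_{L^2}$. Weak lower semicontinuity gives $E_{\H}(\psi)\le\tau_{\tilde\mu}$. Also
\[
\|\psi_n'\|^2\ge \liminf\|\psi_n'\|^2, \qquad \|\psi_n\|_{L^{p+1}}\ge 0,
\]
so
\[
\tau_{\tilde\mu}\ge \tfrac12\bigl(\|\psi'\|^2+V(\psi)\bigr)\ge \tfrac{\nu^2}{2}l_{\H}.
\]
Combined with the upper bound $\tau_{\tilde\mu}\le \frac{\tilde\mu^2}{2}l_{\H}+C\tilde\mu^{p+1}$, this yields $\nu^2\ge\tilde\mu^2-C'\tilde\mu^{p+1}>0$. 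Hence $\nu>0$ and most of the mass stays.

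To close the argument I would then show that $\tau$ is strictly decreasing near $0$, using Lemma~\ref{lemPosMult}. First, obtain a minimizer at some small masses. Then show its Lagrange multiplier satisfies $\omega>0$. From \eqref{eqNeh1},
\[
\omega\mu^2=-Q_{\H}(\psi)-\|\psi\|_{L^{p+1}}^{p+1}.
\]
Moreover $Q_{\H}(\psi)\le 2\tau_\mu\le \mu^2l_{\H}+C\mu^{p+1}$. Finally, by Gagliardo–Nirenberg together with the $H^1$ bound $\|\psi\|_{H^1}\lesssim\mu$ (Gårding, Lemma~\ref{lemGarding}), we get $\|\psi\|_{L^{p+1}}^{p+1}\lesssim\mu^{p+1}$. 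Together these give $\omega\ge -l_{\H}-C\mu^{p-1}>0$ for $\mu$ small.

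The main obstacle is the first step: producing a minimizer for small $\mu$ at all, since Lemma~\ref{lemExistMin} needs strict decrease \emph{before} we have minimizers. I would first try to avoid circularity by replacing the exact minimizer with a near-minimizing Palais–Smale sequence on the sphere (via Ekeland's principle). Its approximate multipliers $\omega_n$ satisfy the same estimate, giving $\liminf\omega_n\ge -l_{\H}/2>0$. With a positive multiplier, mass escaping to infinity along half-lines is controlled: in the weak formulation, $\omega_n\|\psi_n\|^2$ on the escaping part would have to be balanced by $-\|\psi_n'\|^2-\|\psi_n\|_{p+1}^{p+1}\le 0$ there. This forces the escaping part to vanish, so $\psi_n\to\psi$ strongly in $L^2$ and $\psi$ is a minimizer of mass $\tilde\mu$ for every $\tilde\mu\in(0,\mu_1]$.

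If this compactness step is clumsy, the fallback is a continuity argument. The set of $\mu$ where minimizers with $\omega>0$ exist is open to the right by Lemma~\ref{lemPosMult} and Lemma~\ref{lemExistMin}, and the uniform bound $\omega\ge -l_{\H}-C\mu^{p-1}$ would propagate the property up to $\mu_1$.
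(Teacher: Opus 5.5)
Your main route is correct, but it differs from the paper's. Via Ekeland's principle on $\MM_\mu$ you build a minimizing Palais--Smale sequence. You show its approximate multipliers satisfy $\omega_n \ge -l_{\H} - C\mu^{p-1} - o(1)$, using $Q_{\H}(\psi_n) \le 2E_{\H}(\psi_n)$, the trial function $\mu\phi$, and $\|\psi_n\|_{H^1(\G)} \lesssim \mu$ (from Lemma~\ref{lemGarding}, since eventually $Q_{\H}(\psi_n)<0$). You then exclude escape of mass along half-lines by testing with a cut-off times $\psi_n$. This is the mechanism of Proposition~\ref{propPSsignMultipliers}, and you only need $\omega_n \ge c > 0$: for a rescaled cut-off $\theta_R(x)=\theta(x/R)$ the error term $\frac12\int \theta_R''|\psi_n|^2$ is $O(R^{-2}\mu^2)$. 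That gives tightness, hence strong $L^2$ convergence, and weak lower semicontinuity shows the limit is a minimizer.

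The paper avoids all of this by arguing by contradiction, and the ``obstacle'' you point to disappears there. If no minimizer exists at some small $\mu$, Lemmas~\ref{lemExistMin} and~\ref{lemContMon} place $\mu$ in a plateau $[\nu,\mu]$ of $\mu'\mapsto\tau_{\mu'}$. Its left endpoint $\nu>0$ satisfies $\tau_\nu<\tau_{\mu'}$ for all $\mu'<\nu$ by construction, so Lemma~\ref{lemExistMin} yields a minimizer at $\nu$. Corollary~\ref{rmkOmegaZero} then forces its multiplier to satisfy $\omega\le 0$. The Nehari identity, together with $Q_{\H}\ge l_{\H}\nu^2$ and the trial function $\nu\phi$, gives $-l_{\H}-\omega\le \nu^{p-1}\|\phi\|_{L^{p+1}(\G)}^{p+1}$, which is impossible for small $\nu$. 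This is your multiplier estimate, applied to an actual minimizer at the plateau endpoint rather than along a Palais--Smale sequence.

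The paper's proof is shorter and entirely soft: monotonicity of $\tau_\mu$ plus weak lower semicontinuity. Yours needs the $C^1$ structure of $E_{\H}$ on $\MM_\mu$ and a localization estimate. In exchange, it derives compactness directly from the sign of the multiplier, without using the plateau structure of $\mu\mapsto\tau_\mu$, and it is the same compactness argument the paper later uses for Theorem~\ref{thmMultNormalized}.

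Your fallback would not work as stated. Lemma~\ref{lemPosMult} only gives $\tau_{\mu'}<\tau_{\mu_0}$ for $\mu'>\mu_0$, not the strict inequality against every smaller mass that Lemma~\ref{lemExistMin} requires, so ``open to the right'' is not justified. The plateau-endpoint argument above is the correct way to make that idea rigorous. Your preliminary bound on the mass of the weak limit is correct but unnecessary.
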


\begin{proof}
    Suppose by contradiction that there exists a sequence $\mu_n \to 0$ as $n\to \infty$ such that a minimizer for $\tau_{\mu_n}$ does not exist. Then, by Lemmas \ref{lemExistMin} and \ref{lemContMon}, there exists $\nu_n \to 0$, $\eps_n >0$ such that $\tau_\mu$ is constant for any $\mu \in [\nu_n, \mu_n]$ and
    satisfies $\tau_{\nu_n} < \tau_{\mu}$
    for all $\mu \in (0, \nu_n)$. Then, by Lemma \ref{lemExistMin}, $\tau_{\nu_n}$ admit minimizers which we denote by $\psi_n$, and by Corollary \ref{rmkOmegaZero} these minimizers satisfy \eqref{eqNls} with $\omega \leq 0$. This implies that
    \begin{equation*}
       \omega \| \psi_n \|_{L^2 (\G)}^2 + Q_{\H} (\psi_n)
       = - \| \psi_n \|_{L^{p+1} (\G)}^{p+1} 
    \end{equation*}
    which yields by the definition of $l_{\H}$ in \eqref{eqLGamOm}
    \begin{equation}\label{eqIneq1}
         \tau_{\nu_n}
         = E_{\H}(\psi_n)
         = \left( \frac{1}{2} - \frac{1}{p+1}\right) Q_{\H} (\psi_n) - \frac{\omega}{p+1}\| \psi_n \|_{L^2 (\G)}^2
         \geq \left( \frac{1}{2} - \frac{1}{p+1}\right) l_{\H} \nu_n^2 - \frac{\omega}{p+1} \nu_n^2.
    \end{equation}
    On the other hand, let $\phi$ be such that $\| \phi \|_{L^2(\G)} = 1$ and $Q_{\H}(\phi) = l_{\H}$. Then we have 
    \begin{equation*}
        \tau_{\nu_n}  \leq E_{\H}(\nu_n \phi) =  \frac{1}{2} l_{\H} \nu_n^2 + \frac{\nu_n^{p+1}}{p+1}  \| \phi \|_{L^{p+1}(\G)}^{p+1}
    \end{equation*}
    Combined with \eqref{eqIneq1}, this yields 
    \begin{equation*}
        0 < - l_{\H} - \omega \leq \nu_n^{p-1}  \| \phi \|_{L^{p+1}(\G)}^{p+1}
    \end{equation*}
    which yields a contradiction for $\nu_n$ small enough.
\end{proof}
Having established the initial properties of the curve $\mu \mapsto \tau_\mu$, we now study its behavior for large masses. A distinction must be made between the cases $p \in (1,5)$ and $p \geq 5$, as the behavior of the function differs in these two regimes.

In particular, in the next part, we show that, in the case $p \in (1,5) $, there exists a value $\mu_{2}>0$ such that $\tau_{\mu}$ does not admit a minimizer for any $\mu > \mu_{2}$. This implies that the function $\mu \mapsto \tau_\mu$ admits a terminal interval $(\mu_2, \infty)$ on which it is constant and no minimizers exist.

This result is related to the existence of a global minimizer of the energy over $\DQH$ (in particular, a solution of \eqref{eqNls} with $\omega = 0$). We first prove that a candidate $\tilde{\psi} \in \dot{H}^1(\G) \cap L^{p+1}(\G)$ exists independently of the specific structure of $\G$ and the operator $\H$, as stated in the following proposition.

\begin{proposition}\label{prpGlbMin}
    Let $|\mathcal{E}_{\operatorname{inf}}| > 0$ and $l_{\H} < 0$.
    Then there exists $\tilde{\psi} \in \dot H^1(\G) \cap L^{p+1}(\G)$ such that $E_{\H} (\tilde{\psi}) = \tau_{\operatorname{min}}$.
\end{proposition}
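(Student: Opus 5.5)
We take a minimizing sequence $(\psi_n)_n \subseteq \DQH$ for $\tau_{\min}$ and pass to a weak limit in $\dot H^1(\G)\cap L^{p+1}(\G)$. Since $\tau_{\min} > -\infty$ by Lemma~\ref{lemMinGlob}, we may assume $E_{\H}(\psi_n) \le \tau_{\min} + 1$. The $L^2$ mass need not be bounded, so Proposition~\ref{prpBddEnergy} is not available. Instead we use the estimate from the proof of Lemma~\ref{lemMinGlob}. There the vertex term was controlled by $C_\eps\|\psi\|_{L^2(\mathcal K)}^2 + \eps\|\psi'\|_{L^2(\G)}^2$, and the local $L^2$ term by $\frac{1}{2(p+1)}\|\psi\|_{L^{p+1}(\mathcal K)}^{p+1}+R$. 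Keeping half of the nonlinear term rather than all of it gives
\[
E_{\H}(\psi) \ge \Bigl(\tfrac12-\eps\Bigr)\|\psi'\|_{L^2(\G)}^2 + \tfrac{1}{2(p+1)}\|\psi\|_{L^{p+1}(\G)}^{p+1} - KR .
\]
Hence $\|\psi_n'\|_{L^2}$ and $\|\psi_n\|_{L^{p+1}}$ are uniformly bounded. Since $\mathcal K$ has finite measure, Hölder also bounds $\|\psi_n\|_{L^2(\mathcal K)}$, so $(\psi_n)$ is bounded in $H^1(\mathcal K)$ and, more generally, in $H^1$ of every bounded portion of the graph.

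Next we extract a limit by a diagonal argument over truncations $\G_R$ of the half-lines at length $R\to\infty$. On each $\G_R$ the sequence is bounded in $H^1(\G_R)$, and the embedding into $C(\G_R)$ edgewise is compact. This yields a subsequence and a function $\tilde\psi$ with $\psi_n \to \tilde\psi$ locally uniformly on each edge, $\psi_n' \rightharpoonup \tilde\psi'$ weakly in $L^2(\G)$, and $\psi_n\rightharpoonup\tilde\psi$ weakly in $L^{p+1}(\G)$. The Dirichlet projections $P_{D,v}\Psi(v)=0$ pass to the limit because the vertex values converge. Therefore $\tilde\psi \in \dot H^1(\G)\cap L^{p+1}(\G)$, it satisfies the vertex conditions, and it lies in $H^1_{loc}$. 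For such functions $E_{\H}$ still makes sense, since $V$ only involves vertex values. Moreover $V(\psi_n)\to V(\tilde\psi)$, because the vertex values converge by local compactness.

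Finally, weak lower semicontinuity of $\|\cdot'\|_{L^2}^2$ and of $\|\cdot\|_{L^{p+1}}^{p+1}$, together with convergence of $V$, give $E_{\H}(\tilde\psi)\le \liminf E_{\H}(\psi_n) = \tau_{\min}$.

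The main obstacle is the reverse inequality $E_{\H}(\tilde\psi)\ge\tau_{\min}$, since $\tilde\psi$ may fail to lie in $L^2$ and $\tau_{\min}$ is an infimum over $\DQH$. To handle it we approximate $\tilde\psi$ by truncations $\chi_R\tilde\psi\in\DQH$. Here $\chi_R$ equals $1$ on $\G_R$, equals $0$ beyond $\G_{2R}$, and is linear on each half-line in between, so $|\chi_R'|\le 1/R$. The vertex term is unchanged for $R$ large, and the $L^{p+1}$ term converges by dominated convergence. The derivative error is $\|\chi_R'\tilde\psi\|_{L^2}^2 \le R^{-2}\int_R^{2R}|\tilde\psi|^2$, and Hölder bounds this by
\[
R^{-2}\,R^{\frac{p-1}{p+1}}\,\|\tilde\psi\|_{L^{p+1}(R,2R)}^{2},
\]
which tends to $0$ because $\frac{p-1}{p+1}<2$ and the $L^{p+1}$ tail vanishes. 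Hence $E_{\H}(\chi_R\tilde\psi)\to E_{\H}(\tilde\psi)$, so $E_{\H}(\tilde\psi)\ge\tau_{\min}$, which gives equality.
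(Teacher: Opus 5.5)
Your proposal is correct and follows the paper's route: a minimizing sequence bounded in $\dot H^1(\G)\cap L^{p+1}(\G)$, and a weak limit along which the vertex term $V$ converges (Lemma~\ref{lemStrConv2}). Lower semicontinuity then gives the inequality $\le$, and a cutoff/density argument shows that the infimum over $\DQH$ is not below $E_{\H}(\tilde\psi)$. You also make explicit two steps the paper only asserts: the coercive bound, obtained by keeping half of the $L^{p+1}$ term, and the reverse inequality $E_{\H}(\tilde\psi)\ge\tau_{\operatorname{min}}$ via the truncations $\chi_R\tilde\psi$.
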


In order to prove this proposition, we first need to adapt Proposition \ref{prpStrongCon} in the following way.

\begin{lemma}\label{lemStrConv2}
    For any $p > 1$, let
    $(\tilde{\psi}_n)_n \subseteq
    W_p (\G) \coloneqq \dot H^1(\G) \cap L^{p+1}(\G)$
    be bounded with respect to the norm
    \begin{equation*}
        \| \tilde{\psi} \|_{W_p (\G)}
        \coloneqq \| \tilde{\psi}' \|_{L^2(\G)}
        + \| \tilde{\psi} \|_{L^{p+1}(\G)}. 
    \end{equation*}
    Then there exists $\tilde{\psi} \in W_p$
    and a subsequence $(\tilde{\psi}_{n_k})$,
    such that $\tilde{\psi}_{n_k} \rightharpoonup \tilde{\psi}$ as $n \to \infty$ weakly in $ H^1(\G) \cap L^{p+1}(\G)$ and
    $V(\tilde{\psi}_{n_k}) \to V(\tilde{\psi})$ as $k \to \infty$.
\end{lemma}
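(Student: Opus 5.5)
The plan is to work locally on bounded pieces of $\G$, where the $L^{p+1}$ bound controls the $L^2$ norm, and then assemble the limit by a diagonal argument. The statement "weakly in $H^1(\G)\cap L^{p+1}(\G)$" has to be read with care, since $W_p(\G)$ is not contained in $L^2(\G)$. I will interpret it as: $\tilde\psi_{n_k}' \rightharpoonup \tilde\psi'$ in $L^2(\G)$, $\tilde\psi_{n_k}\rightharpoonup\tilde\psi$ in $L^{p+1}(\G)$, and $\tilde\psi_{n_k}\rightharpoonup\tilde\psi$ in $H^1$ of every bounded subgraph, in particular of the compact core $\mathcal K$ from \eqref{eqCompCore}. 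I will also check the vertex condition $P_{D,v}\tilde\Psi(v)=0$ on the limit.

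\textbf{Step 1 (local $H^1$ bounds).} Fix $R\ge \Llow$ and let $\G_R$ be the bounded subgraph formed by all finite edges together with the segments $[0,R]$ of the half-lines. Since $\G_R$ has finite total length $|\G_R|$, Hölder's inequality on each edge gives
\[
\|\tilde\psi_n\|_{L^2(\G_R)} \le |\G_R|^{\frac{p-1}{2(p+1)}}\|\tilde\psi_n\|_{L^{p+1}(\G_R)} .
\]
Combined with the bound on $\|\tilde\psi_n'\|_{L^2(\G)}$, this shows that $(\tilde\psi_n|_{\G_R})_n$ is bounded in $H^1(\G_R)$ (edgewise), uniformly in $n$ for each fixed $R$.

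\textbf{Step 2 (extraction and identification of the limit).} Take $R=1,2,3,\dots$ and extract successive subsequences. For each $R$, the restrictions converge weakly in $H^1(\G_R)$ and, by the compact embedding $H^1(I)\hookrightarrow C(\bar I)$ on bounded intervals, uniformly on each edge of $\G_R$. A diagonal subsequence $\tilde\psi_{n_k}$ then converges weakly in $H^1(\G_R)$ for every $R$ to a single function $\tilde\psi\in H^1_{\mathrm{loc}}$, consistently on overlaps. Since $L^{p+1}(\G)$ and $L^2(\G)$ are reflexive, after a further extraction $\tilde\psi_{n_k}\rightharpoonup g$ in $L^{p+1}(\G)$ and $\tilde\psi_{n_k}'\rightharpoonup h$ in $L^2(\G)$. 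Testing against functions compactly supported in some $\G_R$ identifies $g=\tilde\psi$ and $h=\tilde\psi'$. Hence $\tilde\psi\in W_p(\G)$, and by weak lower semicontinuity $\|\tilde\psi\|_{W_p}\le\liminf_k\|\tilde\psi_{n_k}\|_{W_p}$.

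\textbf{Step 3 (vertex terms).} All vertices lie in $\mathcal K\subset\G_1$, and on $\G_1$ the convergence is uniform on every edge. Therefore $\tilde\Psi_{n_k}(v)\to\tilde\Psi(v)$ in $\C^{d_v}$ for every $v\in\mathcal V$. Two consequences follow:
\begin{itemize}
\item the closed condition $P_{D,v}\tilde\Psi(v)=0$ passes to the limit;
\item $V$ in \eqref{eqLocPart} is a continuous (quadratic) function of the finitely many vertex values, so $V(\tilde\psi_{n_k})\to V(\tilde\psi)$.
\end{itemize}

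\textbf{Main obstacle.} The only genuinely delicate point is that there is no global $L^2$ bound. This is why Proposition~\ref{prpStrongCon} cannot be quoted directly and why the statement must be read in the local sense described above. The obstacle is handled by the Hölder estimate of Step 1 on sets of finite measure, combined with the diagonal extraction of Step 2.
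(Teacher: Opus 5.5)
Your proof is correct, but it takes a different route from the paper's. The paper establishes two auxiliary facts. The first is the global embedding $W_p(\G)\hookrightarrow L^\infty(\G)$, via the bound $|\tilde\psi_e(x)|\lesssim \bigl(\|\tilde\psi_e\|_{L^{p+1}}^{\frac{p+1}{2}}\|\tilde\psi_e'\|_{L^2}\bigr)^{\frac{2}{p+3}}$, obtained by integrating $\frac{d}{dy}|\tilde\psi_e|^{\frac{p+3}{2}}$ from the far end of each half-line. The second is the density of compactly supported $H^1$ functions in $W_p(\G)$, by cut-off and mollification. It then leaves the passage from these facts to the stated convergences implicit.

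You bypass both facts. H\"older's inequality on the finite-measure pieces $\G_R$ already gives local $H^1$ bounds. The compactness of $H^1(I)\hookrightarrow C(\bar I)$ on bounded intervals then gives convergence of the vertex values $\tilde\Psi_{n_k}(v)$, and hence of $V$. Some local compactness step of this kind is needed in any case, because the $L^\infty$ bound alone only makes the vertex values bounded, not convergent.

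What the paper's route buys is a quantitative global sup bound. The density statement is not needed for the lemma itself; its natural role is in Proposition~\ref{prpGlbMin}, to guarantee $E_{\H}(\tilde\psi)\ge\tau_{\operatorname{min}}$ for a limit $\tilde\psi$ that need not lie in $L^2(\G)$.

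Your proposal also makes explicit two points the paper leaves tacit. First, the limit satisfies $P_{D,v}\tilde\Psi(v)=0$. Second, since $W_p(\G)\not\subset L^2(\G)$, the weak convergence must be read as weak convergence of $\tilde\psi_{n_k}'$ in $L^2(\G)$ and of $\tilde\psi_{n_k}$ in $L^{p+1}(\G)$ (and in $H^1$ of bounded subgraphs). This is exactly how the lemma is used afterwards.
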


\begin{proof}
    If $\G$ is compact, this follows directly from Proposition \ref{prpStrongCon} and H\"older's inequality. Let us then suppose that $\G$ is not compact.
    We show that $W_p \hookrightarrow L^{\infty}(\G)$. Let $e \in \mathcal{E}$ be any edge and $x\in e$. Suppose $e$ is an infinite edge (as the case $e$ finite is similar and easier). We identify $e$ as $(-\infty,0]$.
    Then we have 
    \begin{equation*}
        |\tilde{\psi}_e(x)|^{\frac{p+3}{2}}
        = \int_{-\infty}^x \frac{p+3}{2} |\tilde{\psi}_e (y)|^{\frac{p+1}{2}} \tilde{\psi}_e' (y) \operatorname{sign} (\tilde{\psi} (y)) dy.
    \end{equation*}
    By Cauchy-Schwarz, we then get
    \begin{equation*}
        |\tilde{\psi}_e (x)|
        \lesssim \left( \| \tilde{\psi}_e\|_{L^{p+1} (-\infty, 0)}^{\frac{p+1}{2}} \| \tilde{\psi}_e' \|_{L^{2} (-\infty, 0)} \right)^{\frac{2}{p+3}}.
    \end{equation*}
    Consequently, we get by summing on all edges
    \begin{equation}
        \| \tilde{\psi} \|_{L^{\infty}(\G)}
        \lesssim \| \tilde{\psi} \|_{W_p (\G)}
    \end{equation}
    Next, we show that $H^1(\G) \cap C_c(\G)$ is dense in $W_p$. The proof is classical, so we just give the idea. Let \( u \in \dot{H}^1(\G) \cap L^{p+1}(\G) \). For \( R > 0 \), let \( \chi_R \in C_c^\infty(\mathbb{R^+}) \) be a smooth cut-off function such that
\[
\chi_R(x) =
\begin{cases}
1 & \text{if } |x| \leq R, \\
0 & \text{if } |x| \geq R+1,
\end{cases}
\quad \text{and} \quad |\chi_R'| \leq C.
\]
Define \( u_R\coloneqq H^1(\G) \cap L^{p+1}(\G)\) where we applied the cut-off on infinite edges. Then:
\[
\| u_R' - u' \|_{L^2(\G)} \leq \| (1 - \chi_R) u' \|_{L^2(\G)} + \| \chi_R' u \|_{L^2(\G)}.
\]
The first term vanishes as \( R \to \infty \) by dominated convergence, since \( \partial_x u \in L^2(\G) \). For the second term, we observe that, if $e\in \mathcal{E}_{\operatorname{inf}}$, then by H\"older's inequality we get 
\[
\| \chi_R' u_e \|_{L^2}^2 \leq C^2 \int_{R \leq |x| \leq R+1} |u_e(x)|^2 dx\lesssim \left(\int_{R \leq |x| \leq R+1} |u_e(x)|^{p+1} dx\right)^{\frac{2}{p+1}} \to 0 \quad \text{as } R \to \infty.
\]
This clearly implies that $\| \chi_R' u \|_{L^2(\G)} \to 0$ as $R \to \infty$. 
Similarly,
\[
\| u_R - u \|_{L^{p+1}} = \| (1 - \chi_R) u \|_{L^{p+1}} \to 0 \quad \text{as } R \to \infty,
\]
by the dominated convergence theorem. Finally, by a standard mollification technique, we can obtain a smooth sequence \( u_{R,\varepsilon} \coloneqq \rho_\varepsilon * u_R \in C_c^\infty(\mathbb{R}) \) with the same characteristics. 
\end{proof}

\begin{proof}[Proof of Proposition \ref{prpGlbMin}]
    If $l_{\H} \geq 0$, then clearly $\tilde{\psi} \equiv 0$.
    Suppose $l_{\H} < 0$ and let $(\tilde{\psi}_n)_n$
    be a minimizing sequence for $\tau_{\operatorname{min}} <0$.
    As it is bounded in $\dot{H}^1(\G) \cap L^{p+1}(\G)$,
    Lemma \ref{lemStrConv2} implies that
    there exists a subsequence, still denoted by $(\tilde{\psi}_n)_n$,
    and $\tilde{\psi} \in \dot{H}^1(\G) \cap L^{p+1}(\G)$
    such that $\tilde{\psi}_n \rightharpoonup \tilde{\psi}$
    weakly in $\dot{H}^1(\G) \cap L^{p+1}(\G)$.
    By weak lower semicontinuity of the norms,
    we deduce that $\tilde{\psi}$ satisfies $E_{\H} (\tilde{\psi}) = \tau_{\operatorname{min}}$.
\end{proof}

\begin{remark} \label{rmkMConcentr}
    We highlight the fact that $\tilde{\psi}$ found in Proposition \ref{prpGlbMin} may not, in general, belong to $L^2(\G)$. The failure to lie in this space requires mass accumulation outside of the compact core $\mathcal{K}$ defined in \eqref{eqCompCore}. Indeed, suppose that a minimizing sequence $(\tilde{\psi}_n)_{n}$ satisfies $\mu_n = \| \tilde{\psi}_n \|_{L^2(\G)} \to \infty$ as $n \to \infty$. Assume by contradiction that $\| \tilde{\psi}_n \|_{L^2(\mathcal K)} \to \infty$. Then, as in Remark \ref{rmkMonComp}, we have that
    \begin{align*}
        E_{\H} (\tilde{\psi}_n)
        & \geq \frac{1}{2}\| \tilde{\psi}_n' \|_{L^{2} (\G \setminus \mathcal{K})}^2  + \frac{1}{p+1} \| \tilde{\psi}_n \|_{L^{p+1} (\G \setminus \mathcal{K})}^{p+1} \\
        & \quad + \frac{1}{4} \| \tilde{\psi}_n' \|_{L^2 (\mathcal{K})}^2 - C \| \tilde{\psi}_n \|_{L^2(\mathcal{K})}^2 + K \| \tilde{\psi}_n \|_{L^{2} (\mathcal{K})}^{p+1}.
    \end{align*}
    for some $C, K >0$. Thus, as $n \to \infty$, it follows that $E_{\H} (\tilde{\psi}_n) \to \infty$ as $n \to \infty$ which leads to a contradiction. Hence, we must have $\| \tilde{\psi}_n \|_{L^2(\G \setminus \mathcal{K})} \to \infty$ as $n \to \infty$.
\end{remark}
We obtain the following corollary, for which we recall that the minimization problem $\tau_{\operatorname{min}}$ is defined in \eqref{eqEnMinGlob}.
\begin{corollary}\label{corFnlPlt1}
    Let $|\mathcal{E}_{\operatorname{inf}}| >0$ and $l_{\H} <0$. Then, for $p \in (1,5)$, $\tau_{\operatorname{min}}$ admits a minimizer. 
\end{corollary}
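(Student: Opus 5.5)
We take the function $\tilde\psi \in \dot H^1(\G)\cap L^{p+1}(\G)$ given by Proposition~\ref{prpGlbMin}, which satisfies $E_{\H}(\tilde\psi)=\tau_{\operatorname{min}}<0$. It suffices to show that $\tilde\psi\in L^2(\G)$: then $\tilde\psi\in\DQH$. Here the vertex condition $P_{D,v}\Psi(v)=0$ passes to the weak limit, since weak convergence in $\dot H^1\cap L^{p+1}$ implies pointwise convergence at vertices by the $L^\infty$ embedding of Lemma~\ref{lemStrConv2}. Hence $\tilde\psi$ is a minimizer of \eqref{eqEnMinGlob}.

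Since $\mathcal K$ is bounded and $\tilde\psi\in L^\infty$, only the half-lines matter. So the goal is to show $\tilde\psi_e\in L^2(\Llow,\infty)$ for each $e\in\mathcal E_{\operatorname{inf}}$.

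First, $\tilde\psi$ is a critical point of $E_{\H}$ with respect to perturbations by $C_c^\infty$ functions supported in the interior of a half-line. These are admissible, because the energy is finite and the perturbation does not touch the vertex. Consequently, on each half-line, $\tilde\psi_e$ solves $\tilde\psi_e''=|\tilde\psi_e|^{p-1}\tilde\psi_e$ in the distributional sense, and by bootstrap it is smooth there.

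Next, we exploit that $\tilde\psi_e'\in L^2$ and $\tilde\psi_e\in L^{p+1}$ with $\tilde\psi_e$ uniformly continuous (it is $1/2$-Hölder since $\tilde\psi_e'\in L^2$). This gives $\tilde\psi_e(x)\to 0$ as $x\to\infty$. Multiplying the equation by $\overline{\tilde\psi_e'}$ and taking real parts shows that
\[
|\tilde\psi_e'|^2-\tfrac{2}{p+1}|\tilde\psi_e|^{p+1}
\]
is constant. Integrability forces $\tilde\psi_e'\to0$ along a sequence, so this constant is zero. This is exactly \eqref{eqDecayHL2} with $\omega=0$. The same conserved-quantity argument (as in \cite{KaOh09}) gives the classification of Proposition~\ref{prpHLSol} for $\omega=0$. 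Thus either $\tilde\psi_e\equiv0$, or, up to a phase, $\tilde\psi_e=\phi_0$ from \eqref{eqSol2} (shifted). To handle the phase: writing $\tilde\psi_e=\rho e^{i\theta}$ away from zeros, the angular momentum $\rho^2\theta'$ is constant and must vanish by decay, so the phase is constant.

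The main point, and the only place where $p<5$ enters, is that
\[
\phi_0(x)\sim x^{-2/(p-1)}\in L^2(\R^+) \iff \tfrac{4}{p-1}>1 \iff p<5.
\]
Hence each $\tilde\psi_e\in L^2$, so $\tilde\psi\in H^1(\G)$ and $\tilde\psi\in\DQH$, and it attains $\tau_{\operatorname{min}}$.

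The delicate step is the justification that $\tilde\psi_e$ solves the ODE and decays, given only $\dot H^1\cap L^{p+1}$ regularity. I expect it to go through as described, because the equation only involves local test functions and the decay follows from the uniform continuity argument.
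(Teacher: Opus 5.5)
Your proposal is correct and follows the same route as the paper. You take the $\dot H^1\cap L^{p+1}$ minimizer $\tilde\psi$ of Proposition~\ref{prpGlbMin}, note that it solves \eqref{eqNls} with $\omega=0$ on each half-line, use the half-line classification (either $\tilde\psi_e\equiv0$ or $\tilde\psi_e$ is the profile \eqref{eqSol2}), and conclude $\tilde\psi\in L^2$ since $x^{-2/(p-1)}$ is square-integrable at infinity exactly when $p<5$. You also supply details the paper leaves implicit: the vertex condition in the limit, the decay, the first integral and the constancy of the phase. The one step to state more carefully is the admissibility of $\tilde\psi+t\eta$: it really rests on $\tau_{\operatorname{min}}$ also being the infimum of $E_{\H}$ over $\dot H^1\cap L^{p+1}$ functions satisfying the vertex condition, which follows from the truncation argument in Lemma~\ref{lemStrConv2} (cutting off far out on the half-lines leaves vertex values unchanged), not merely from finiteness of the energy.
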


\begin{proof}
    Let $\tilde{\psi} \in \dot{H}^1 (\G) \cap L^{p+1} (\G)$ be the minimizer found in Proposition \ref{prpGlbMin}. In particular, as it is a global minimizer, it satisfies \eqref{eqNls} with $\omega = 0$. By uniqueness of the decaying solutions, this implies that either $\tilde{\psi}_e \equiv 0$ or it is of the form \eqref{eqSol2} on any $e\in \mathcal{E}_{\operatorname{inf}}$. In the first case, $\supp (\tilde{\psi})$ is compact, which implies that $\tilde{\psi} \in H^1(\G)$. The same is true in the second case, as the function in \eqref{eqSol2} belongs to $L^2(\R^+)$ since $1 < p < 5$.

    In particular, the global minimum of the energy is reached in a function of $\DQH$. 
\end{proof}

Next, we consider the case $p \geq 5$.
A direct consequence of Propositions \ref{prpHLSol}
and \ref{prpGlbMin} is the following.

\begin{proposition} \label{prpOmegaZero}
    Let $|\mathcal{E}_{\operatorname{inf}}| >0$ and $l_{\H} <0$.
    Let $p \geq 5$. If a minimizer $u = (u_e)_{e \in \mathcal{E}} \in \DQH$ for problem \eqref{eqEnMinGlob} exists, then for all $e \in \mathcal E_{\operatorname{inf}}$, $u_e \equiv 0$.
\end{proposition}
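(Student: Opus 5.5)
The plan is to show that a global minimizer $u \in \DQH$ of \eqref{eqEnMinGlob} is a critical point of $E_{\H}$ on the whole space $\DQH$, with no constraint. It therefore solves \eqref{eqNls} with $\omega = 0$. Once this is known, the restriction of $u$ to each half-line is an $H^1$ solution of the stationary equation there, and Proposition~\ref{prpHLSol} kills it when $p \ge 5$.

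\textbf{Step 1: the Euler–Lagrange equation.} Since $u$ minimizes $E_{\H}$ over all of $\DQH$, for every $\phi \in \DQH$ the map $t \mapsto E_{\H}(u + t\phi)$ has a minimum at $t=0$. This map is $C^1$: the quadratic part is smooth, and $\|\cdot\|_{L^{p+1}}^{p+1}$ is $C^1$ on $H^1(\G) \hookrightarrow L^\infty(\G)$. Hence
\begin{equation*}
\operatorname{Re}\, Q_{\H}(u,\phi) + \operatorname{Re}\int_{\G} |u|^{p-1} u \overline{\phi}\, dx = 0 \quad \text{for all } \phi \in \DQH .
\end{equation*}
This says that $u$ is a weak solution of \eqref{eqNls} with $\omega = 0$. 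Taking $\phi$ compactly supported in the interior of an edge gives the equation $u_e'' = |u_e|^{p-1}u_e$ edgewise in the distributional sense. The usual bootstrap then makes $u$ a classical solution in the sense of Proposition~\ref{prpPropert}, and the vertex conditions are recovered from general $\phi$. We only need the edgewise equation on the half-lines together with $u_e \in H^1$.

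\textbf{Step 2: use the half-line classification.} Fix $e \in \mathcal{E}_{\operatorname{inf}}$, identified with $[0,\infty)$. Then $u_e \in H^1(\R^+)$ solves $u_e'' = |u_e|^{p-1}u_e$ on $(0,\infty)$, which is \eqref{eqNls2} with $\omega = 0$. Proposition~\ref{prpHLSol} says that for $\omega = 0$ and $p \ge 5$ the only $H^1(\R^+)$ solution is $0$. As a sanity check on why no nonzero profile survives: by \eqref{eqDecayHL2} with $\omega = 0$, the modulus satisfies $|u_e'| = c|u_e|^{(p+1)/2}$. The decaying profiles are therefore $(x+b)^{-2/(p-1)}$, and these fail to lie in $L^2$ precisely when $4/(p-1) \le 1$, that is, when $p \ge 5$. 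Hence $u_e \equiv 0$.

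\textbf{Main obstacle.} The only delicate point is making sure Proposition~\ref{prpHLSol} applies. It is phrased for solutions of \eqref{eqNls}, whereas we only know $u_e$ is an $H^1$ solution on an open half-line with no prescribed boundary condition at $0$. Its proof, however, only uses the ODE, the decay \eqref{eqDecayHL} and the first integral \eqref{eqDecayHL2}. Both of these follow on a single half-line from $u_e \in H^1$ and the edgewise equation, by multiplying by $\overline{u_e'}$ and integrating to infinity. So I will invoke it in that form, with the remark that the boundary condition at the vertex plays no role.
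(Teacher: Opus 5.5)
Your proposal is correct and follows the paper's argument. A global minimizer solves \eqref{eqNls} with $\omega = 0$, so on each half-line it is either zero or of the form \eqref{eqSol2}, and that profile is not in $L^2(\R^+)$ when $p \ge 5$. Your derivation of the Euler--Lagrange equation, and your check that Proposition~\ref{prpHLSol} uses only the edgewise ODE, decay and first integral (not the vertex condition), fill in details the paper leaves implicit.
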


\begin{proof}
    It follows from the fact that functions in \eqref{eqSol2} are not in $L^2(\R^+)$ for $p \geq 5$.
\end{proof}

Proposition \ref{prpOmegaZero} is not sufficiently strong to conclude that a minimizer does not exist. The problem lies in the absence of continuity at the vertices for general coupling conditions. We will show in the next section that when continuity is assured, no global minimizer exists.

Next, we study the behavior for large masses. We define $\mu_{\operatorname{min}} \in (0, \infty]$ as
\begin{equation} \label{eqMinMass}
    \mu_{\operatorname{min}} \coloneqq 
    \begin{cases}
        \min \{ \mu > 0: \tau_\mu = \tau_{\operatorname{min}} \} & \text{ if } \mu \mapsto \tau_{\mu} \text{ admits a minimum}, \\
        \infty & \text{ if not.}
    \end{cases}
\end{equation}

By Lemma \ref{lemExistMin}, if $\mu_{\operatorname{min}} \neq \infty$, then $\tau_{\operatorname{min}}$ admits a minimizer.
We deduce the following result, ensuring that, in this case, there cannot exist a sequence of minimizers for $\tau_{\operatorname{min}}$ whose mass diverges to infinity.

\begin{proposition} \label{prpVertCont}
Let $|\mathcal{E}_{\operatorname{inf}}| >0$ and $l_{\H} <0$. 
    Assume that $\tau_{\operatorname{min}}$ admits a minimizer. Then, there exists $\mu_2 \geq \mu_{\operatorname{min}}$ such that $\tau_{\operatorname{min}}$ does not admit any minimizer of mass $\mu$ for any $\mu > \mu_2$.
\end{proposition}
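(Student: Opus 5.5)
The strategy is to argue by contradiction. Suppose there are masses $\mu_n \to \infty$ and minimizers $u_n \in \DQH$ of $\tau_{\operatorname{min}}$ with $\|u_n\|_{L^2(\G)} = \mu_n$. The goal is to show that their masses must stay bounded, which then gives the threshold $\mu_2$. The inequality $\mu_2 \ge \mu_{\operatorname{min}}$ is automatic once $\mu_2$ is defined as the supremum of the masses of global minimizers, since a minimizer of mass $\mu_{\operatorname{min}}$ exists by the definition \eqref{eqMinMass} and Lemma~\ref{lemExistMin}.

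\textbf{Step 1: the minimizers solve \eqref{eqNls} with $\omega = 0$.} Each $u_n$ is an unconstrained global minimizer of $E_{\H}$ on $\DQH$, so it is a critical point. Hence $u_n$ solves \eqref{eqNls} with $\omega = 0$.

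\textbf{Step 2: uniform control on the compact core.} Since $E_{\H}(u_n) = \tau_{\operatorname{min}}$ is constant, the argument of Remark~\ref{rmkMConcentr} (the local Gagliardo--Nirenberg type bound \eqref{eqLocalGN} together with the coercive lower bound from Remark~\ref{rmkMonComp} on $\mathcal K$) gives the following.
\begin{itemize}
\item $\|u_n\|_{L^2(\mathcal K)}$ is uniformly bounded.
\item $\|u_n'\|_{L^2(\G)}$ is uniformly bounded.
\item $\|u_n\|_{L^{p+1}(\G)}$ is uniformly bounded.
\end{itemize}
Consequently the mass must blow up outside the core: $\|u_n\|_{L^2(\G\setminus\mathcal K)} \to \infty$. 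In particular there is an infinite edge $e$ on which $\|u_{n,e}\|_{L^2}\to\infty$ along a subsequence.

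\textbf{Step 3: explicit half-line profiles.} On this edge $e$, Proposition~\ref{prpHLSol} with $\omega = 0$ applies.
\begin{itemize}
\item If $p \ge 5$, it gives $u_{n,e} \equiv 0$, which contradicts $\|u_{n,e}\|_{L^2}\to\infty$.
\item If $p\in(1,5)$, it gives $u_{n,e} = e^{i\theta_n}\phi_0(\cdot + b_n)$ for some shift $b_n > 0$, with $\phi_0$ the explicit profile \eqref{eqSol2}.
\end{itemize}

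\textbf{Step 4: bounding the shifts (main obstacle).} In the case $p\in(1,5)$, the mass of $\phi_0(\cdot+b_n)$ on $[\Llow,\infty)$ is an explicit decreasing function of $b_n$, behaving like $b_n^{(p-5)/(p-1)}$ up to constants. It blows up only when $b_n \to 0$, and the exponent is negative precisely because $p<5$. On the other hand, $b_n \to 0$ forces $|u_{n,e}(\Llow)| \approx \phi_0(\Llow)$, which stays bounded, so this alone gives no contradiction.

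Here is the point that needs care. With the convention $\Llow \le \min_e L_e/2$, the half-line coordinate starts at the vertex. Then $\|u_{n,e}\|_{L^2(0,\Llow)}^2 = \int_{b_n}^{b_n+\Llow}\phi_0^2$ also blows up as $b_n\to0$, because $\phi_0^2 \sim x^{-4/(p-1)}$ and $4/(p-1) > 1$. This contradicts the uniform bound on $\|u_n\|_{L^2(\mathcal K)}$ from Step 2. The same blow-up can also be seen directly: $\|u_{n,e}'\|_{L^2}^2 = \int_{b_n}^\infty |\phi_0'|^2$ diverges as $b_n\to0$, contradicting the uniform $\dot H^1$ bound.

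The alternative $b_n \not\to 0$ is excluded as well. If $b_n \ge \beta > 0$, then $\|u_{n,e}\|_{L^2(\Llow,\infty)}$ is bounded by the corresponding integral of $\phi_0^2$ on $[\beta,\infty)$, which is finite since $p<5$, contradicting the blow-up of the mass on $e$.

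In both cases the masses $\mu_n$ stay bounded, which is a contradiction. Hence the set of masses of global minimizers is bounded, and $\mu_2$ can be taken as its supremum.
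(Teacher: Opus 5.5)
Your proof is correct and follows the same argument as the paper. Global minimizers solve \eqref{eqNls} with $\omega=0$, the core mass stays bounded as in Remark~\ref{rmkMConcentr}, and Proposition~\ref{prpHLSol} gives the contradiction: $u_{n,e}\equiv 0$ for $p\ge 5$, while for $p\in(1,5)$ $\|u_{n,e}\|_{L^2(0,\Llow)}$ blows up as the shift tends to $0$; your case split on $b_n$ spells out why $b_n\to 0$, which the paper leaves implicit. One harmless slip: the mass of the profile on $[\Llow,\infty)$ is of order $(\Llow+b_n)^{(p-5)/(p-1)}$, not $b_n^{(p-5)/(p-1)}$, so it is bounded uniformly in $b_n$ (which makes your second case automatic).
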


\begin{proof} 
    Assume by contradiction that there exists $(\mu_n)_{n} \subseteq  \R^+$ such that $\mu_n \to \infty$ as $n \to \infty$ and, for all $n \in \mathbb N$, $\tau_{\operatorname{min}}$ admits a minimizer $\psi_n = (\psi_{n, e})_{e \in \mathcal{E}}$ of mass $\mu_n$.  Observe that $(\| \psi_n \|_{L^2(\mathcal K)})_{n}$ is bounded, by the same reasoning as in Remark \ref{rmkMConcentr}.
    Moreover, since $E_{\H} (\psi_n) = \tau_{\operatorname{min}}$, for every $n$, $\psi_n$ are global minimizers of the energy, and thus they satisfy \eqref{eqNls} with $\omega = 0$.
    
    For $p \geq 5$, by Proposition \ref{prpHLSol}, for any $e \in \mathcal{E}_{\operatorname{inf}}$, we have $\psi_{n, e} \equiv 0$. But as $\| \psi_n \|_{L^2 (\G)} \to \infty$ and $\| \psi_n \|_{L^2(\mathcal{K})} \lesssim 1$, this yields a contradiction.
    
    For $p \in (1, 5)$, assume that there exists $e \in \mathcal E_{\operatorname{inf}}$, identified with $[0, \infty)$, such that $\| \psi_{n, e} \|_{L^2(0, \infty)}^2 \to \infty$. By Proposition \ref{prpHLSol}, there exists $(a_n)_{n} \subseteq  \mathbb R^+$ such that $a_n \to 0$ and, for all $x \in [0, \infty)$,
    \begin{equation*}
        u_{n, e} (x)
        = \left( \frac{2(p + 1)}{(p - 1)^2} \right)^{\frac{1}{p - 1}} \left( x + a_n \right)^{-\frac{2}{p - 1}} .
    \end{equation*}
    Thus, for $\Llow$ defined in \eqref{eqMinLnght}, we have
    \[
        \| \psi_n \|_{L^2(\mathcal K)} \geq \| \psi_{n, e} \|_{L^2 (0, \Llow)} \to \infty,
    \]
    as $n \to \infty$, which yields a contradiction.
\end{proof}

Finally, we prove stability of the set $\mathcal{B}_\mu$, which concludes the proof of Theorem \ref{thmPSmall}.

\begin{proof}[Proof of Theorem \ref{thmPSmall}]
    The existence of energy ground states for small masses is
    given by Proposition \ref{prpMuOne}. Moreover, it is a direct consequence of  Corollary \ref{corFnlPlt1} and Proposition \ref{prpVertCont}
    that, for $p\in (1,5)$, $\tau_\mu$ does not admit any minimizer for sufficiently large masses.
    Finally, the claim regarding stability of energy ground states follows using
    the same proof as that of \cite[Theorem $\operatorname{II}.2.$]{CaLi82}.
\end{proof}

\subsection{Delta type vertex conditions}
\label{sec:delta}

In this section, we deal with the particular case where every vertex condition is given by a delta, which we now specify.  Let $(\alpha_v)_{v \in \mathcal V} \subseteq  \R$ and $\H_{\alpha}$ be the Hamiltonian operator with $\delta$-vertex conditions, i.e.
\begin{equation}\label{eqDefHAlpha}
    \mathcal{D}(\H_{\alpha})
    = \left\{ \psi \in H^2(\mathcal G)
    \Bigm| \text{for all } v \in \mathcal V, \, \psi \text{ is continuous at } v
    \text{ and } \sum_{e \succ v} \psi_e'(v) = \alpha_v \psi(v) \right\},
\end{equation}
where $\psi(v)$ denotes the common value at the vertex, where $e \succ v$ denotes
the edges $e \in \mathcal E$ adjacent to $v$ and $\alpha_v \in \R$ for all $v \in \mathcal V$. We highlight that in this case, writing $\psi(v)$ for the value of a function $\psi \in \mathcal{D} (\H_{\alpha})$ is allowed since all the functions in $D(\H_{\alpha})$ are continuous everywhere on the graph, and in particular at any vertex. 

The quadratic form  associated to $\H_{\alpha}$ is given by
\begin{equation*}
    Q_{\H_{\alpha}} (\psi)
    = \| \psi \|_{L^2(\G)}^2 + \sum_{v \in \mathcal V} \alpha_v |\psi(v)|^2
\end{equation*}
for $\psi$ in the domain
\begin{equation*}
    \mathcal{D}(Q_{\H_{\alpha}})
    = \left\{ \psi \in H^1(\G) \mid \text{for all } v \in \mathcal V, \, \psi \text{ is continuous at } v \right\},
\end{equation*}
see \cite[Chapter I]{BeKu13} for the details. To satisfy the condition $l_{\H_\alpha}<0$, some $\alpha_v$ have to be chosen negative.
A sufficient condition to obtain $l_{\H_\alpha}<0$ is the following.

\begin{remark}
    Assume that for some $v \in \mathcal V$, we have $\alpha_v < -d_v / \Llow$,
    with $\Llow$ given by \eqref{eqMinLnght}. Then, $l_{\H_{\alpha}} < 0$.
    Indeed, let $C = \sqrt{3/d_v \Llow}$ and $\psi = (\psi_e)$ such that
    \begin{equation*}
        \psi_e(x) =
        \begin{cases}
            -\frac{C}{\Llow}x + C, & x \in [0, \Llow] \\
            0,  & x \not \in [0, \Llow]
        \end{cases}
    \end{equation*}
    for any $e \in J_v$ identified with $[0,A_e]$ for some $A_e >0$ or $[0,\infty)$.
    Moreover, let $\psi_e \equiv 0$ for $e \not \in J_v$. Then $\psi \in \mathcal{D}(Q_{\H_{\alpha}})$ and
    \begin{equation*}
        \| \psi \|_{L^2(\G)} = 1, \quad \quad
        Q_{\H_{\alpha}} (\psi)
        = C^2 \left( \frac{d_v}{\Llow}+ \alpha_v \right) < 0,
    \end{equation*}
    implying that $l_{\H_\alpha}< 0$.
   
\end{remark}
For these types of vertex conditions, we are able to obtain more information about the existence of ground states.
First, we will show that in this case, all the ground states are strictly positive up to a constant phase shift. 

\begin{lemma} \label{lemPos}
    Let $|\mathcal{E}_{\operatorname{inf}}| > 0$ and $l_{\H_\alpha} <0$.
    Let $\mu >0$, and let $u$ be a minimizer for $\tau_\mu$. Then there exists $\theta \in \R$ such that $e^{i\theta}u >0$.
\end{lemma}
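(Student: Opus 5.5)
The plan follows the classical route for positivity of ground states. First we compare $u$ with $|u|$, and then we use the strong maximum principle together with a phase-rigidity argument. The key structural fact is that functions in $\mathcal D(Q_{\H_\alpha})$ are continuous on the whole graph. It follows that $|u| \in \mathcal D(Q_{\H_\alpha})$, with $\||u|\|_{L^2(\G)} = \mu$, $\||u|\|_{L^{p+1}(\G)} = \|u\|_{L^{p+1}(\G)}$, and vertex term $\sum_v \alpha_v |u(v)|^2$ unchanged. The diamagnetic inequality $\||u|'\|_{L^2(\G)} \le \|u'\|_{L^2(\G)}$, applied edgewise, then gives $E_{\H_\alpha}(|u|) \le E_{\H_\alpha}(u) = \tau_\mu$. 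Hence $|u|$ is also a minimizer, so it solves \eqref{eqNls} with some multiplier $\omega'$, and equality holds in the diamagnetic inequality: $\||u|'\|_{L^2} = \|u'\|_{L^2}$.

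Next we show $|u| > 0$ everywhere. On each edge, $w = |u| \ge 0$ solves $w'' = \omega' w + w^p$, which is an ODE with locally Lipschitz right-hand side. If $w(x_0) = 0$ at an interior point of an edge, then $x_0$ is a minimum, so $w'(x_0) = 0$, and ODE uniqueness forces $w \equiv 0$ on that edge. In particular $w$ then vanishes at the edge's endpoints. Now suppose $w(v) = 0$ at a vertex $v$. Every edge $e \succ v$ has $w_e \ge 0$ and $w_e(v) = 0$, so each outgoing derivative satisfies $w_e'(v) \ge 0$. The $\delta$-condition gives $\sum_{e \succ v} w_e'(v) = \alpha_v w(v) = 0$, hence all $w_e'(v) = 0$. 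By ODE uniqueness again, $w \equiv 0$ on every edge incident to $v$. Since $\G$ is connected, this zero set propagates to the whole graph, contradicting $\|w\|_{L^2} = \mu > 0$. Thus $|u| > 0$ on $\G$. On half-lines this is also consistent with Proposition~\ref{prpPropert}(4).

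Finally we remove the phase. Because $u$ never vanishes and is continuous on the connected graph $\G$, we may write $u = |u| e^{i\varphi}$ with $\varphi$ continuous and edgewise $H^1_{loc}$. Locally this follows since $u$ is $H^1$ and bounded away from zero on compact pieces; globally it is not automatic if $\G$ has cycles. The main obstacle is this lifting, and we address it as follows. From $|u'|^2 = |u|'^2 + |u|^2|\varphi'|^2$, the equality $\||u|'\|_{L^2} = \|u'\|_{L^2}$ yields $\int_\G |u|^2 |\varphi'|^2 = 0$, so $\varphi' = 0$ a.e. Rather than relying on a global lift, we work with $u/|u|$ directly. This is a continuous $S^1$-valued function that is $H^1_{loc}$ on each edge, and $(u/|u|)' = 0$ a.e. by the same computation, so it is constant on each edge. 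Continuity at the vertices and connectedness of $\G$ then make it a single global constant $e^{-i\theta}$, giving $e^{i\theta} u = |u| > 0$.
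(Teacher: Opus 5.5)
Your proof is correct and follows essentially the same route as the paper. Both pass to $|u|$, then rule out zeros at vertices and in edge interiors using the $\delta$-condition, the sign of one-sided derivatives and ODE uniqueness, and finally show the phase is constant on each edge via the energy comparison before gluing by continuity at the vertices. The differences are cosmetic: you use the equality case of the diamagnetic inequality and work with $u/|u|$ instead of the paper's strict-inequality contradiction with the edgewise lifting $u_e=\rho_e e^{i\theta_e}$, and you make explicit why $|u|$ is again a minimizer, which the paper takes for granted.
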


\begin{proof}
Let $u$ be any minimizer for $\tau_\mu$ for some $\mu > 0$, and set $\psi = |u|$.  
Then $\psi$ is also a minimizer and satisfies \eqref{eqNls} for some $\omega \in \R$.  
By Proposition \ref{prpPropert}, we have $\psi \in C^2$ in the interior of each edge $e \in \mathcal{E}$.  
Let $\psi = (\psi_e)_{e \in \mathcal{E}}$. We now show that $\psi > 0$ on $\G$.

\medskip

\noindent \textit{Step 1: Positivity at vertices.}
Assume, by contradiction, that there exists a vertex $y \in \mathcal V$ such that $\psi(y) = 0$.  
Since $\psi \in \mathcal{D}(\H_\alpha)$, we have
\[
\sum_{e \in J_y} \psi_e'(y) = 0.
\]  
Then either
\begin{enumerate}
    \item there exist edges $e_1, e_2 \succ y$ such that $\psi_{e_1}'(y) > 0$ and $\psi_{e_2}'(y) < 0$, or
    \item $\psi_e'(y) = 0$ for all $e \in J_y$.
\end{enumerate}
In the first case, continuity implies $\psi_{e_2} < 0$ near $y$, contradicting $\psi \ge 0$.  
In the second case, by uniqueness of solutions to \eqref{eqNls2} on each edge, we get $\psi_e \equiv 0$ for all $e \in J_y$.  

If $e \in \mathcal{E}_{\operatorname{fin}}$, identified with $I_e = [0,L_e]$ and $y = 0$, then by continuity $\psi_e(L_e) = 0$, so the argument can be iterated along the graph.  
If $e$ is infinite, identified with $[0,\infty)$, we also get $\psi_e \equiv 0$.  
In either case, this leads to $\psi \equiv 0$ on $\G$, which contradicts the fact that $E_{\H_\alpha}(\psi) = \tau_\mu < 0$.

\medskip

\noindent \textit{Step 2: Positivity on edges.}  
Suppose, by contradiction, that there exists an edge $e \in \mathcal{E}$, identified with $I_e \subset \R$, and $x \in I_e$ such that $\psi_e(x) = 0$.  
Since $\psi \ge 0$, $x$ is a global minimum, and thus $\psi_e'(x) = 0$.  
By uniqueness of solutions to \eqref{eqNls2} on $I_e$, it follows that $\psi_e \equiv 0$ on $I_e$.  
By continuity, this implies that $\psi$ vanishes at a vertex of $\G$, which contradicts Step 1. Combining Steps 1 and 2, we conclude that $\psi > 0$ on all of $\G$. 

\medskip

\noindent \textit{Step 3: Constant complex phase.}  
Finally, we show that $u$ is strictly positive up to a complex phase shift. As we have $\rho(x) = |u(x)|>0$, for any minimizer $u = (u_e)_{e\in \mathcal{E}}$, we can write $u_e(x) = \rho_e(x) e^{i\theta_e(x)}$ for $\rho_e,\theta_e$ real-valued by the one dimensional Lifting theorem.

We note that $\rho_e \in \mathcal{D} (\H)$, as for any vertex, identified with zero, the condition
\begin{equation*}
    \sum_{e \succ v} e^{-i\theta_e(0)} u_e'(0) =  \sum_{e \succ v} (\rho_e'(0) + i \rho_e(0)\theta_e'(0)) = \alpha_v \rho_e(0),
\end{equation*}
implies, separating the real and imaginary parts, that
\begin{equation*}
    \sum_{e \succ v} \rho_e'(0) = \alpha_v \rho_e(0),
    \quad
    \sum_{e \succ v} \theta_e'(0) = 0.
\end{equation*}
On the other hand, if $\theta_e' \not \equiv 0$ for some $e$ then 
\begin{equation*}
    \| u_e'\|_{L^2(e)}^2 =  \| \rho_e'\|_{L^2(e)}^2  +  \| \rho_e \theta_e' \|_{L^2(e)}^2  > \| \rho_e'\|_{L^2(e)}^2,
\end{equation*}
which yields 
\begin{equation*}
    E_{\H}(u) >  E_{\H}(\rho), \quad \| u \|_{L^2(\G)} = \| \rho \|_{L^2(\G)},
\end{equation*}
which contradicts the fact that $u$ is an energy minimizer. As a consequence, $\theta_e(x) = \theta_e \in \R$ is fixed on any edge. By the continuity on each vertex, we conclude that the phase shift is constant on the whole graph. 
\end{proof}

Thanks to Lemma \ref{lemPos} and Proposition \ref{prpPropert}, we can deduce
a complete picture of the existence of ground states in the case $p \geq 5$ as stated in the next Proposition.

\begin{proposition} \label{prpSupCrit}
    Let $|\mathcal{E}_{\operatorname{inf}}| >0$, $l_{\H_\alpha} <0$ and $p \geq 5$. Then for all $\mu > 0$, there exists a minimizer for $\tau_\mu$.
\end{proposition}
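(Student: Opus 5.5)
Suppose, to reach a contradiction, that $\tau_{\bar\mu}$ has no minimizer for some $\bar\mu>0$. I would use the structure theory from the general case. By Proposition~\ref{prpMuOne}, minimizers exist on $(0,\mu_1]$. Set $\mu_I\coloneqq\sup\{\mu\le\bar\mu : \tau_\nu<\tau_{\nu'} \text{ for all } \nu'<\nu\le\mu\}$, i.e.\ the end of the initial interval on which $\mu\mapsto\tau_\mu$ is strictly decreasing.

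\textbf{Step 1: a minimizer at $\mu_I$ with $\omega\le 0$.} Lemma~\ref{lemExistMin} together with the continuity in Lemma~\ref{lemContMon} gives a minimizer $u$ at $\mu_I$. By Lemma~\ref{lemExistMin}, every $\mu<\bar\mu$ with $\tau_\mu<\tau_{\mu'}$ for all $\mu'<\mu$ has a minimizer. Since $\tau$ is non-increasing and non-attainment happens at $\bar\mu$, the function $\tau$ must be constant on some interval $[\mu_I,\mu]$ with $\mu>\mu_I$ (this is the discussion after Corollary~\ref{rmkOmegaZero}). Corollary~\ref{rmkOmegaZero} then shows that $u$ solves \eqref{eqNls} with some $\omega\le0$.

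\textbf{Step 2: $u$ vanishes on every half-line.} By Lemma~\ref{lemPos} we may take $u>0$ on all of $\G$, after a phase shift. On each half-line $e\in\mathcal E_{\operatorname{inf}}$, the component $u_e$ is an $H^1(\R^+)$ solution of \eqref{eqNls2}. Proposition~\ref{prpHLSol} says that for $\omega<0$, and for $\omega=0$ with $p\ge5$, the only such solution is $u_e\equiv0$. This contradicts $u>0$, and since $|\mathcal E_{\operatorname{inf}}|>0$ such an edge exists. Hence the ground state at $\mu_I$ must have $\omega>0$.

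\textbf{Step 3: conclusion.} With $\omega>0$, Lemma~\ref{lemPosMult} gives $\tau_\mu<\tau_{\mu_I}$ for every $\mu>\mu_I$. This contradicts the constancy of $\tau$ just to the right of $\mu_I$. So no such $\bar\mu$ exists, and minimizers exist for all $\mu>0$.

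The main obstacle is Step 1: one has to make rigorous that the first failure of strict decrease is located at a mass $\mu_I$ that is itself attained and is followed by a constancy interval. I would handle it as in the proof of Proposition~\ref{prpMuOne}. Define $\mu_I$ as above; the infimum in its definition is approached from below by masses with minimizers. Continuity of $\tau$ then gives $\tau_{\mu_I}<\tau_\nu$ for $\nu<\mu_I$, so Lemma~\ref{lemExistMin} applies at $\mu_I$ itself. Maximality of $\mu_I$ together with monotonicity forces $\tau$ to be constant just beyond $\mu_I$, as needed. This is a purely one-dimensional argument about a monotone continuous function. The delta conditions enter only through the positivity in Lemma~\ref{lemPos}.
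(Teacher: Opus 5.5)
Your overall route is the paper's: argue by contradiction, find a minimizer at the left end of a plateau of $\mu\mapsto\tau_\mu$, get $\omega\le0$ from Corollary~\ref{rmkOmegaZero}, then use Proposition~\ref{prpHLSol} (with $p\ge5$) to make the half-line components vanish, contradicting Lemma~\ref{lemPos}. The gap is in how you choose the mass at which you run this argument. Your Step~1 does not work as written, even though you identify it as the main obstacle.

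\textbf{The false step.} You take $\mu_I$ to be the end of the initial interval of strict decrease, and you claim that maximality plus monotonicity forces $\tau$ to be constant on some $[\mu_I,\mu_I+\varepsilon]$. This is false for continuous non-increasing functions. Plateaus $[a_k,b_k]$ with $a_k\downarrow\mu_I$, separated by stretches of strict decrease, are consistent with everything known a priori. Maximality only says that every right-neighbourhood of $\mu_I$ contains a plateau, not that a plateau starts at $\mu_I$. In that scenario $\tau_\mu<\tau_{\mu_I}$ for all $\mu>\mu_I$. Then Corollary~\ref{rmkOmegaZero} does not apply at $\mu_I$, and you cannot conclude $\omega\le0$ there. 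Lemma~\ref{lemPosMult} in Step~3 also gives no contradiction, because its conclusion is exactly what holds. Your proof that $\mu_I$ itself is attained is fine; the failure is only the claimed plateau to its right.

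\textbf{The repair.} Anchor the argument at the non-attained mass, as the paper does. Set $\mu_\ast\coloneqq\inf\{\mu\le\bar\mu:\tau_\mu=\tau_{\bar\mu}\}$.
\begin{enumerate}
\item Since $\tfrac12 l_{\H_\alpha}\mu^2\le\tau_\mu<0$, we have $\tau_\mu\to0$ as $\mu\to0$, while $\tau_{\bar\mu}<0$. Hence $\mu_\ast>0$.
\item Continuity gives $\tau_{\mu_\ast}=\tau_{\bar\mu}$, so $\tau$ is constant on $[\mu_\ast,\bar\mu]$.
\item By construction $\tau_\nu>\tau_{\mu_\ast}$ for all $\nu<\mu_\ast$, so Lemma~\ref{lemExistMin} gives a minimizer at $\mu_\ast$.
\item Since $\bar\mu$ is not attained, $\mu_\ast<\bar\mu$.
\end{enumerate}
Now Corollary~\ref{rmkOmegaZero} legitimately gives $\omega\le0$, and your Step~2 completes the proof. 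Your Step~3 is then unnecessary: Step~2 already contradicts the $\omega\le0$ from Step~1, and Corollary~\ref{rmkOmegaZero} is just the contrapositive of Lemma~\ref{lemPosMult}.
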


\begin{proof}
    Assume by contradiction that there exists $\hat \mu > 0$ such that $\tau_{\hat{\mu}}$ does not admit a minimizer.  Then, there exists $0 < \bar{\mu} < \hat{\mu}$ such that $\mu \mapsto \tau_{\mu}$ is constant on the interval $[\bar{\mu}, \hat{\mu}]$ and $\tau_{\bar{\mu}} = \tau_{\hat{\mu}}$ admits a non-negative minimizer $u$ of mass $\bar{\mu}$. Notice that by Proposition \ref{prpMuOne}, $\bar{\mu} >0$. By Remark \ref{rmkOmegaZero}, $u$ satisfies \eqref{eqNls} for a Lagrange multiplier $\omega \leq 0$. In particular, $u_e \equiv 0$ for all $e \in \mathcal E_{\operatorname{inf}}$ by Proposition \ref{prpHLSol}. This contradicts Lemma \ref{lemPos}.
\end{proof}

Moreover, we also obtain a complete picture in the case $p \in (1, 5)$, for graphs with one vertex. Indeed, in this case, independently of the structure of the graph, we show that minimizers for $\tau_\mu$ exist for any $\mu \in [0,\mu_{\operatorname{min}}]$ and do not exist for any $\mu > \mu_{\operatorname{min}}$, where $\mu_{\operatorname{min}}$ is defined in \eqref{eqMinMass}.  

\begin{proposition} \label{prpUniVer}
    Let $|\mathcal{E}_{\operatorname{inf}}| > 0$, $|\mathcal{V}| = 1$, $l_{\H_\alpha} < 0$ and $p \in (1, 5)$. Then $\mu \to \tau_\mu$ is strictly decreasing in $[0, \mu_{\operatorname{min}}]$ and constant for $\mu > \mu_{\operatorname{min}}$. Moreover, $\tau_{\operatorname{min}}$ does not admit any minimizer with mass $\mu > \mu_{\operatorname{min}}$.
\end{proposition}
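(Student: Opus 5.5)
The plan is to use uniqueness of the solution of \eqref{eqNls} with $\omega = 0$ on a one-vertex graph with $\delta$ conditions, combined with the structural results already established. By Corollary~\ref{corFnlPlt1}, $\tau_{\operatorname{min}}$ admits a minimizer. Hence $\mu_{\operatorname{min}}<\infty$, and by Lemma~\ref{lemExistMin} it is attained. Since $\mu \mapsto \tau_\mu$ is non-increasing (Lemma~\ref{lemContMon}) and $\tau_\mu \ge \tau_{\operatorname{min}}$, it is constant equal to $\tau_{\operatorname{min}}$ for $\mu \ge \mu_{\operatorname{min}}$. It then remains to show two things: strict decrease on $[0,\mu_{\operatorname{min}}]$, and nonexistence of minimizers of mass $\mu>\mu_{\operatorname{min}}$.

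\textbf{Step 1: uniqueness at $\omega=0$.} Any global minimizer $\psi$ of $E_{\H_\alpha}$ in $\mathcal D(Q_{\H_\alpha})$ solves \eqref{eqNls} with $\omega=0$. By the argument of Lemma~\ref{lemPos}, which uses only minimality and the $\delta$ conditions, it is positive up to a phase. Since $|\mathcal V|=1$, every edge is either a half-line attached at $v$ or a loop at $v$. On each half-line, Proposition~\ref{prpHLSol} gives $\psi_e=\phi_0(\cdot+b)$, where $b$ is determined by the common value $\psi(v)$. On each loop of length $L_e$, I would use energy conservation $|\psi'|^2 = \tfrac{2}{p+1}\psi^{p+1}+c_e$ together with equal boundary values at both ends. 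This should give a symmetric profile determined by $\psi(v)$; the constant $c_e$ is fixed through a monotone relation between the half-period and $L_e$. The Kirchhoff-$\delta$ condition $\sum_e \psi_e'(v)=\alpha_v\psi(v)$ then becomes a scalar equation $F(\psi(v))=0$. My goal is to show that $F$ has exactly one positive root, for instance by proving that $\psi(v)\mapsto \sum_e \psi_e'(v)/\psi(v)$ is strictly monotone. This is the content I expect from Lemma~\ref{lemMinMax}, and it is the main obstacle: the monotonicity on loops requires a careful analysis of the time map of the ODE $\psi''=\psi^p$.

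\textbf{Step 2: conclusions.} Uniqueness yields a unique positive global minimizer $\psi_*$, up to phase. Its mass must be $\mu_{\operatorname{min}}$, because a minimizer of mass $\mu_{\operatorname{min}}$ exists and is a global minimizer. Now suppose a minimizer $u$ of $\tau_\mu$ exists for some $\mu>\mu_{\operatorname{min}}$. Then $E(u)=\tau_{\operatorname{min}}$, so $u$ is a global minimizer, and by uniqueness $|u|=\psi_*$ has mass $\mu_{\operatorname{min}}$, a contradiction. This proves the last claim.

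\textbf{Step 3: strict decrease.} Suppose $\tau$ were constant on some $[\mu_a,\mu_b]\subset[0,\mu_{\operatorname{min}}]$ with $\mu_a<\mu_b$, and take $\mu_a$ minimal. By Lemma~\ref{lemExistMin}, a positive minimizer $u$ of mass $\mu_a$ exists, and by Corollary~\ref{rmkOmegaZero} its multiplier satisfies $\omega\le0$. The case $\omega<0$ is excluded, since Proposition~\ref{prpHLSol} would force $u_e\equiv0$ on the half-lines, contradicting positivity (Lemma~\ref{lemPos}). Hence $\omega=0$. Then $u$ is a positive solution of \eqref{eqNls} with $\omega=0$, and by the uniqueness of Step 1 (applied to positive solutions rather than only to minimizers) $u=\psi_*$, so $\mu_a=\mu_{\operatorname{min}}$, a contradiction. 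Combined with Proposition~\ref{prpMuOne}, this gives strict decrease on $[0,\mu_{\operatorname{min}}]$, and Lemma~\ref{lemExistMin} then gives existence throughout this interval.
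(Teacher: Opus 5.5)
\textbf{There is a genuine gap at the crux.} The steps around it are sound and match the paper's structure. Once you know that \eqref{eqNls} with $\omega=0$ has at most one positive solution on the one-vertex graph, your Steps~2 and~3 go through and are exactly the paper's two steps. At the left endpoint of a constancy interval there is a positive minimizer; it has $\omega\le 0$ by Corollary~\ref{rmkOmegaZero}, hence $\omega=0$ by Proposition~\ref{prpHLSol}, hence it coincides with the global minimizer. The same argument handles a minimizer of mass $\mu>\mu_{\operatorname{min}}$.

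The problem is that this uniqueness statement is the entire substance of the proposition beyond the earlier lemmas, and you never prove it. You reduce to a scalar equation $F(\psi(v))=0$, call ``exactly one positive root'' the goal, and leave the loop time map as the ``main obstacle''. As written, the argument stops exactly where the work is. Only ``at most one'' root is needed, since the global minimizer supplies one. Also, Lemma~\ref{lemMinMax} is not a time-map monotonicity statement.

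\textbf{The missing idea is a comparison (maximum-principle) argument.} It makes the ODE classification on loops unnecessary. The paper's route goes as follows:
\begin{itemize}
\item For positive solutions $u,\hat u$ with the same $\omega\ge 0$, self-adjointness of $\H_\alpha$ gives $\int_\G u\hat u\,(u^{p-1}-\hat u^{p-1})\,dx=0$.
\item So if $u\not\equiv\hat u$, the difference $u-\hat u$ changes sign. Since it tends to $0$ along the half-lines, it attains a positive maximum and a negative minimum.
\item A positive maximum cannot occur at an interior point $x_0$ of an edge, because there $(u-\hat u)''(x_0)=\omega(u-\hat u)(x_0)+u(x_0)^p-\hat u(x_0)^p>0$. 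The same holds for a negative minimum.
\item Both extrema would therefore sit at vertices. With $|\mathcal V|=1$ and continuity at $v$, this is impossible.
\end{itemize}

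\textbf{Your shooting route can also be closed by comparison, without time maps.}
\begin{itemize}
\item On a loop, write $\psi=s\,w_s$ with $s=\psi(v)$. Then $w_s''=s^{p-1}w_s^p$ and $w_s=1$ at both ends.
\item For $s_1<s_2$, $w_{s_1}$ is a supersolution of the $s_2$-problem, so $w_{s_2}\le w_{s_1}$ on the loop.
\item Hence the outgoing derivatives of $w_{s_2}$ at both ends are at most those of $w_{s_1}$. They are strictly smaller: if they agreed, comparing $w_{s_2}''=s_2^{p-1}>s_1^{p-1}=w_{s_1}''$ at the endpoint would give $w_{s_2}>w_{s_1}$ nearby.
\item The same comparison with $s_1=s_2$ shows the loop profile is unique for fixed $s$.
\item On half-lines the ratio is explicit: $\psi_e'(0)/s=-\frac{2}{p-1}b^{-1}$, with $b=b(s)$ decreasing in $s$.
\end{itemize}
Together these show that $s\mapsto s^{-1}\sum_e\psi_e'(v)$ is strictly decreasing. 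So the $\delta$-condition $s^{-1}\sum_e\psi_e'(v)=\alpha_v$ has at most one positive solution $s$, which is the uniqueness you need.
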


To prove this proposition, we use the following lemma. 

\begin{lemma} \label{lemMinMax}
    Let $|\mathcal{E}_{\operatorname{inf}}| >0$.
    Suppose \eqref{eqNls} with $\H = \H_\alpha$ for some $\alpha$ admits two positive solutions $u$, $\hat{u}$ for the same $\omega \geq 0$. If $\max(u - \hat{u})$ or $\min(u - \hat{u})$ are attained in $\mathring{e}$ for some $e \in \mathcal E$, then $u \equiv \hat{u}$.
\end{lemma}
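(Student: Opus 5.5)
The plan is a maximum principle argument on the edge where the extremum is attained, followed by ODE uniqueness and propagation through the graph. By symmetry (exchange $u$ and $\hat u$) it suffices to treat the case where $\max(u-\hat u)$ is attained at some $x_0\in\mathring{e}$. Set $w=u-\hat u$. Since both are positive solutions of \eqref{eqNls2} on $e$, we have in the interior of $e$
\begin{equation*}
    w'' = \omega w + u^p - \hat u^p .
\end{equation*}
First suppose $w(x_0)>0$. Then $u(x_0)>\hat u(x_0)>0$, so $u^p(x_0)-\hat u^p(x_0)>0$, and with $\omega\ge 0$ we get $w''(x_0)>0$. This is incompatible with $x_0$ being an interior maximum, where $w''(x_0)\le 0$ (recall $w\in C^2(\mathring e)$ by Proposition~\ref{prpPropert}). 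Hence $\max w\le 0$, that is, $u\le \hat u$ everywhere.

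Next, I would rule out $\max w<0$ and show $\max w=0$. For this I would use the behavior at infinity on an infinite edge. By Proposition~\ref{prpPropert} both functions decay, so $w\to 0$ along infinite edges. Therefore $\sup w\ge 0$, and the maximum being attained gives $\max w = 0$, attained at $x_0$. At this point $w(x_0)=0$ and $w'(x_0)=0$. Since $u(x_0)=\hat u(x_0)$ and $u'(x_0)=\hat u'(x_0)$, uniqueness for the Cauchy problem of \eqref{eqNls2} (the nonlinearity is locally Lipschitz near the positive value $u(x_0)$) gives $u_e\equiv\hat u_e$ on $e$.

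It remains to propagate the identity to the whole graph. At a vertex $v$ of $e$, continuity gives $u(v)=\hat u(v)$. The $\delta$-condition in \eqref{eqDefHAlpha} gives $\sum_{e'\succ v}(u_{e'}'-\hat u_{e'}')(v)=0$, but that alone does not fix each derivative. To get around this, I would reuse the maximum argument. Since $w\le 0$ everywhere and $w(v)=0$, the vertex is a maximum of $w$, so each outgoing derivative satisfies $w_{e'}'(v)\le 0$ (derivative taken away from $v$). Together with the vanishing sum and the fact that $w_e'(v)=0$, all derivatives must vanish. ODE uniqueness on each adjacent edge then gives $w\equiv0$ there, and connectedness together with the finiteness of $\mathcal E$ and $\mathcal V$ gives $u\equiv\hat u$.

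The case of the minimum is identical, with inequalities reversed and $u$, $\hat u$ exchanged. The main obstacle I expect is this vertex propagation step, specifically the sign argument for the one-sided derivatives when $w$ has a maximum at the vertex. The degenerate case $\omega=0$ also needs care: there the strict inequality $w''(x_0)>0$ still holds, because it comes from the nonlinear term alone.
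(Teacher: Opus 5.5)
Your proof is correct. Its first half, the interior maximum principle at $x_0$ giving $u\le\hat u$ on all of $\G$, is the same as the paper's. The two arguments diverge after that.

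\textbf{The paper's route.} It closes the argument globally. Self-adjointness of $\H_\alpha$, applied to the two equations, gives the identity $\int_{\G} u\hat u\,(u^{p-1}-\hat u^{p-1})\,dx=0$. Since $0<u\le\hat u$, the integrand is nonpositive, so it vanishes identically and $u\equiv\hat u$.

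\textbf{Your route.} You argue locally:
\begin{enumerate}
\item Decay on an infinite edge (Proposition~\ref{prpPropert}) forces $\max(u-\hat u)=0$.
\item Cauchy uniqueness at $x_0$ then gives $u_e\equiv\hat u_e$ on $e$.
\item At each vertex, a Hopf-type argument applies: every outgoing derivative of $w$ is $\le 0$, while their sum equals $\alpha_v w(v)=0$, so all of them vanish.
\item Cauchy uniqueness and connectedness then propagate $w\equiv 0$ to the whole graph.
\end{enumerate}
All of these steps are sound. The remark $w_e'(v)=0$ in the vertex step is superfluous, since the sign condition plus the vanishing sum already suffice.

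\textbf{Comparison.} The paper's argument is shorter. It never uses the infinite edge, so the hypothesis $|\mathcal E_{\operatorname{inf}}|>0$ is idle there. It uses only self-adjointness, not continuity at vertices or the derivative-sum structure of the $\delta$-condition, so it would work for general self-adjoint vertex conditions. In exchange, it genuinely needs positivity of both solutions to fix the sign of the integrand.

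Your argument uses $|\mathcal E_{\operatorname{inf}}|>0$ and the $\delta$-type structure in an essential way. In exchange, it never really needs positivity. Strict monotonicity of $s\mapsto|s|^{p-1}s$ suffices for the first step, and this nonlinearity is $C^1$ for $p>1$, so Cauchy uniqueness holds everywhere. Your argument therefore applies to any pair of real-valued solutions with $\omega\ge 0$.
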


\begin{proof}
    Let $u$, $\hat{u}$ be two positive solutions to \eqref{eqNls} with the same $\omega \geq 0$. 
    By the self-adjointness of $\H_\alpha$, we have 
    \begin{equation*}
        \langle \H_{\alpha} u, \hat{u} \rangle_{L^2(\G)} = \langle  u, \H_{\alpha} \hat{u} \rangle_{L^2(\G)}
    \end{equation*}
    Using \eqref{eqNls}, this yields
    \begin{equation} \label{eqUniVer1}
        \int_{\mathcal G} u (x) \hat u (x) \left( u (x)^{p-1} - \hat u (x)^{p-1} \right) \, dx= 0.
    \end{equation}
Suppose that $x_0 \in \operatorname{argmax}(u - \hat{u})$ is such that $x_0 \in\mathring{e}$ for some edge $e \in \mathcal{E}$. Recall that $u,\hat{u} \in C^2(e)$ by Proposition \ref{prpPropert}. Then, by \eqref{eqNls2} we have 
\begin{equation}\label{eqCalc1}
\begin{aligned}
     0 \geq & \left( u_e - \hat{u}_e \right)''(x_0)
        = \omega\left(  u_e(x_0) - \hat{u}_e (x_0)  \right) + \left( u_e (x_0)^p - \hat{u}_e (x_0)^p \right) \\
        & = (u_e(x_0)-\hat{u}_e (x_0))
        \left( \omega + p \int_0^1 \bigl((1 -s) \hat{u}_e (x_0) + su_e(x_0)\bigr)^{p-1} \, ds \right),
\end{aligned}
\end{equation}
    Since $\omega \geq 0$ and, by hypothesis, $u_e(x_0), \hat{u}_e(x_0) \geq 0$, \eqref{eqCalc1} implies
\[
u_e(x_0) - \hat{u}_e(x_0) \leq 0.
\]
In particular, by the definition of $x_0$, it follows that for all $x \in \G$,
\[
u(x) - \hat{u}(x) \leq u_e(x_0) - \hat{u}_e(x_0) \leq 0,
\]
that is,
\[
0 \leq u(x) \leq \hat{u}(x).
\]
Then, \eqref{eqUniVer1} implies $u \equiv \hat{u}$, which concludes the first part of the proof.  

To prove the same property for $\min(u - \hat{u})$, we observe that the only difference lies in the opposite inequality in \eqref{eqCalc1}; hence, the argument proceeds analogously.
\end{proof}

\begin{proof}[Proof of Proposition \ref{prpUniVer}]
    By Proposition \ref{prpMuOne}, $\mu \to \tau_\mu$ is decreasing in an interval $[0,\mu_1]$, and $\mu_1 \leq \mu_{\operatorname{min}}$ where $\mu_{\operatorname{min}}$ is defined in \eqref{eqMinMass}. Moreover, by Corollary \ref{corFnlPlt1} and Lemma \ref{lemPos}, there exists a strictly positive minimizer $u_{\operatorname{min}}$ for the problem $\tau_{\operatorname{min}}$ such that $\| u_{\operatorname{min}}\|_{L^2(\G)} = \mu_{\operatorname{min}}$. Since it is a global minimizer of the energy, it satisfies \eqref{eqNls} with $\H = \H_\alpha$ for $\omega = 0$.

    \medskip
    
    \noindent \textit{Step 1: $\mu_1 = \mu_{\operatorname{min}}$.} Suppose by contradiction that $\mu_1 < \mu_{\operatorname{min}}$. So there exists $\mu_2 > \mu_1$ such that $\mu \mapsto \tau_\mu$ is constant on $[\mu_1,\mu_2]$. By Lemma \ref{lemExistMin}, Remark \ref{rmkOmegaZero}, and Lemma \ref{lemPos}, there exists a minimizer for $\tau_{\mu_1}$, all the minimizers for $\tau_{\mu_1}$ are strictly positive up to a constant phase shift, and they satisfy \eqref{eqNls} with $\H = \H_\alpha$ for some $\omega \leq 0$. By Proposition \ref{prpHLSol},
    it follows that they satisfy \eqref{eqNls} with $\omega = 0$. Let $u >0$ be one of these minimizers. Then by Lemma \ref{lemMinMax} we have that either $u = u_{\operatorname{min}}$ which is absurd as $\| u \|_{L^2(\G)} = \mu_1 < \mu_{\operatorname{min}}$ or both $\max(u - u_{\operatorname{min}})$ and  $\min(u - u_{\operatorname{min}})$ are attained in a vertex of $\G$. But since $|\mathcal{V}| =1$, it again yields that $u = u_{\operatorname{min}}$, contradicting the assumptions. Thus, $\mu_1 = \mu_{\operatorname{min}}$.

    \medskip

    \noindent \textit{Step 2: $\tau_{\operatorname{min}}$ does not admit any minimizer for any $\mu > \mu_{\operatorname{min}}$.} Indeed, assume by contradiction that there exists one minimizer $u$ such that $\| u \|_{L^2(\G)} = \mu$. Then by Lemma \ref{lemPos}, it can be chosen strictly positive. As it is a global minimizer of the energy, it satisfies \eqref{eqNls} with $\omega = 0$. Thus, we can repeat the argument of \textit{step 1} to reach a contradiction. 
\end{proof}

\subsection{Bifurcation from the linear ground state}

In this subsection, we pursue our study of
$\delta$-type vertex conditions. In this setting, a straightforward adaptation of Lemma~\ref{lemPos} shows that the lowest eigenvalue $l_{\H_\alpha}$ is simple as the associated normalized eigenfunction $\phi_0$ is strictly positive (up to a complex phase shift).

As a consequence, for frequencies $\omega$ sufficiently close to $-l_{\H_\alpha}$, the nonlinear ground states can be constructed as bifurcations from the linear ground state. This bifurcation analysis follows the approach introduced in~\cite{KiKePe11}, which relies on a Lyapunov--Schmidt reduction around the simple eigenvalue $l_{\H_\alpha}$.

\begin{proposition}\label{prpBifurcation}
Let $|\mathcal{E}_{\operatorname{inf}}|>0$ and $l_{\H_\alpha}<0$.
Then there exists $\delta>0$ such that, for any
$\omega \in (-l_{\H_\alpha}-\delta,-l_{\H_\alpha})$,
there exists a unique real-valued solution $\phi(\omega)$
to~\eqref{eqNls}, up to a change of sign. \footnote{Since by
Lemma~\ref{lemPos} every ground state is real-valued up to a phase shift,
one may easily apply this statement of energy ground states for small $\mu$.}
It is of the form
\begin{equation}\label{eqDecompos}
\phi(\omega)=a\phi_0+\Theta,
\qquad
\langle \phi_0,\Theta\rangle=0,
\qquad
\|\Theta\|_{H^1}\lesssim a^{p}.
\end{equation}
Moreover, the function
$m(\omega)=\|\phi(\omega)\|_{L^2(\G)}^2$
satisfies
\begin{equation*}
m(\omega)
=
\left(
-\frac{\omega+l_{\H_\alpha}}
{\|\phi_0\|_{L^{p+1}(\G)}^{p+1}}
\right)^{\frac{2}{p-1}}
+
O\Bigl( (-\omega - l_{\H_{\alpha}})^{\frac{4}{p-1}} \Bigr).
\end{equation*}
Furthermore, $m\in C^1(-l_{\H_\alpha}-\delta,-l_{\H_\alpha})$
and is invertible on this interval.
\end{proposition}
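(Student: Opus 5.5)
Set $\lambda_0 \coloneqq l_{\H_\alpha}$, let $\phi_0$ be the positive normalized eigenfunction (simple, as noted above), and let $P$ be the orthogonal projection onto $\phi_0^\perp$. We look for real solutions $\phi = a\phi_0+\Theta$ with $\langle\phi_0,\Theta\rangle=0$. Projecting \eqref{eqNls} onto $\phi_0^\perp$ and onto $\phi_0$ gives the Lyapunov--Schmidt system
\begin{equation*}
(\H_\alpha+\omega)\Theta + P\bigl(|a\phi_0+\Theta|^{p-1}(a\phi_0+\Theta)\bigr)=0,
\qquad
a(\lambda_0+\omega) + \bigl\langle |a\phi_0+\Theta|^{p-1}(a\phi_0+\Theta),\phi_0\bigr\rangle = 0 .
\end{equation*}

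\textbf{Step 1 (range equation).} Since $\sigma(\H_\alpha)\setminus\{\lambda_0\}\subset[\lambda_0+\gamma,\infty)$ for some $\gamma>0$ (isolated simple eigenvalue), the operator $(\H_\alpha+\omega)|_{\phi_0^\perp}$ is invertible from $\DQH\cap\phi_0^\perp$ onto the dual, uniformly for $\omega$ near $-\lambda_0$. By Gårding's inequality (Lemma~\ref{lemGarding}) its inverse is bounded $H^{-1}\to H^1$ with constant $\lesssim\gamma^{-1}$. The nonlinearity $u\mapsto |u|^{p-1}u$ is $C^1$ from $H^1(\G)$ to $L^2(\G)$, using the embedding $H^1\hookrightarrow L^\infty$ and $p>1$. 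The implicit function theorem (or a contraction in a ball of radius $Ca^p$) then gives, for $|a|$ small and $\omega$ near $-\lambda_0$, a unique $\Theta=\Theta(a,\omega)\in\phi_0^\perp$ with $\|\Theta\|_{H^1}\lesssim |a|^p$, which is $C^1$ in $(a,\omega)$ and odd in $a$. This yields \eqref{eqDecompos}.

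\textbf{Step 2 (bifurcation equation).} Dividing the kernel equation by $a\neq0$ gives
\begin{equation*}
\lambda_0+\omega + |a|^{p-1}\|\phi_0\|_{L^{p+1}(\G)}^{p+1} + R(a,\omega)=0,
\qquad |R|\lesssim |a|^{2p-2}.
\end{equation*}
The remainder bound uses $\|\Theta\|\lesssim|a|^p$ and the mean value inequality. Setting $s=|a|^{p-1}$, the map $g(s,\omega)=\lambda_0+\omega+s\|\phi_0\|^{p+1}_{L^{p+1}}+\tilde R(s,\omega)$ satisfies $\partial_s g(0,\cdot)=\|\phi_0\|^{p+1}_{L^{p+1}}>0$. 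This is the delicate point: we need $\tilde R$ to be $C^1$ in $s$ down to $s=0$ with $\partial_s\tilde R\to0$. We check it by writing $|a|^{p-1}$-homogeneity explicitly, i.e.\ rescaling $\Theta=a|a|^{p-1}\eta$ so that $\eta$ solves a regular fixed-point problem in $s$. Solving $g=0$ gives $s(\omega)=-(\omega+\lambda_0)/\|\phi_0\|^{p+1}_{L^{p+1}}+O((\omega+\lambda_0)^2)$, which is positive precisely for $\omega<-\lambda_0$. This yields existence together with local uniqueness up to sign.

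\textbf{Step 3 (mass expansion and uniqueness).} We have $m(\omega)=a^2+\|\Theta\|^2_{L^2}=s^{2/(p-1)}(1+O(s^{2(p-1)}))$, which gives the stated expansion with error $O((-\omega-\lambda_0)^{4/(p-1)})$ after expanding $s^{2/(p-1)}$. Differentiating the relation $g(s(\omega),\omega)=0$ shows $m\in C^1$ with $m'(\omega)<0$ near $-\lambda_0$, hence $m$ is invertible there. For global uniqueness of real solutions, any real solution with $\omega$ close to $-\lambda_0$ must be small in $H^1$. Indeed, from \eqref{eqNeh1} we get $(\omega+\lambda_0)\|\psi\|^2_{L^2}+(Q-\lambda_0\|\psi\|^2)+\|\psi\|^{p+1}_{p+1}=0$, so the $\phi_0^\perp$-component and the $L^{p+1}$ norm are controlled by $\delta\|\psi\|_2^2$. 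Controlling the full $L^2$ norm a priori is the main obstacle. We would first try to bound $\|\psi\|_{L^2}$ by combining this identity with the explicit half-line profiles of Proposition~\ref{prpHLSol}, whose decay rate $\sqrt\omega\approx\sqrt{-\lambda_0}$ is uniform. Once smallness holds, the solution lies in the neighbourhood where Step 1 applies, hence coincides with $\pm\phi(\omega)$.
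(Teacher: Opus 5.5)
Your reduction is the paper's Lyapunov--Schmidt scheme (your $\lambda_0=l_{\H_\alpha}$, and your $P$ is the projection onto $\phi_0^\perp$). The difference is in how you treat the bifurcation equation. The paper keeps $a$ as the parameter and solves $f(a,\omega)=0$ for $\omega(a)$ by the implicit function theorem in $\omega$. It then needs the separate bounds $\|\partial_a\Theta\|_{H^1}\lesssim a^{p-1}$ and $\|\partial_\omega\Theta\|_{H^1}\lesssim a^{p}$ to get $\omega'(a)<0$ and $M'(a)>0$. Your rescaling works. Since $N(u)\coloneqq|u|^{p-1}u$ satisfies $N(au)=a|a|^{p-1}N(u)$, the range equation becomes $(\H_\alpha+\omega)\eta+PN(\phi_0+s\eta)=0$, which is $C^1$ in $(s,\omega)$ through $s=0$. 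The equation $\lambda_0+\omega+s\langle N(\phi_0+s\eta),\phi_0\rangle=0$ then gives $s(\omega)\in C^1$ with $s'<0$ at once. This route is cleaner. Two other points, however, need correction.

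First, the global uniqueness you leave open is not a real obstacle, and you do not need the half-line profiles. Take a real solution $\psi=a\phi_0+\Theta$ with $\Theta\perp\phi_0$ and $\omega\in(-\lambda_0-\delta,-\lambda_0)$. The identity \eqref{eqNeh1}, the spectral gap of $\H_\alpha$ above $\lambda_0$ and Lemma~\ref{lemGarding} give
\[
c\,\|\Theta\|_{H^1}^2+\|\psi\|_{L^{p+1}}^{p+1}\le|\omega+\lambda_0|\,a^2 .
\]
Hence $\|\Theta\|_{H^1}\lesssim|\omega+\lambda_0|^{1/2}|a|$. For $\delta$ small this gives $\|\psi\|_{L^{p+1}}\ge|a|\bigl(\|\phi_0\|_{L^{p+1}}-C|\omega+\lambda_0|^{1/2}\bigr)\ge\frac12|a|\,\|\phi_0\|_{L^{p+1}}$. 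Inserting this back yields $|a|^{p-1}\lesssim|\omega+\lambda_0|$, while $a=0$ forces $\psi=0$. So every nonzero real solution lies in your local uniqueness neighbourhood. The paper itself only obtains the local uniqueness from the implicit function theorem, so this argument is exactly what is needed to justify ``unique'' as stated.

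Second, your Step 3 does not give the stated remainder. Since $\|\Theta\|_{L^2}^2=a^2s^2\|\eta\|_{L^2}^2$, the correction factor is $1+O(s^2)$, not $1+O(s^{2(p-1)})$. Set $\kappa\coloneqq-(\omega+\lambda_0)/\|\phi_0\|_{L^{p+1}}^{p+1}$, so that $s=\kappa+O(\kappa^2)$. Expanding $s^{2/(p-1)}$ then gives only $m=\kappa^{2/(p-1)}+O(\kappa^{(p+1)/(p-1)})$, which lies in $O(\kappa^{4/(p-1)})$ only for $p\ge3$.

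For $p<3$ this cannot be improved. The $\kappa^2$-coefficient of $s$ is $p\bigl\langle P\phi_0^p,((\H_\alpha-\lambda_0)|_{\phi_0^\perp})^{-1}P\phi_0^p\bigr\rangle/\|\phi_0\|_{L^{p+1}}^{p+1}$, which is strictly positive because $\phi_0^{p-1}$ is not constant. For instance, on a star graph with a single $\delta$-vertex the explicit profiles \eqref{eqSol1} give
\[
m(\omega)=\kappa^{\frac{2}{p-1}}\Bigl(1+\frac{p-1}{2(p+1)}\,\frac{\omega+\lambda_0}{\lambda_0}+O\bigl((\omega+\lambda_0)^2\bigr)\Bigr).
\]
For $p=2$ on a half-line one has exactly $m=\frac32(\sigma_0-\sigma)^2(\sigma_0+2\sigma)$, with $\sigma=\sqrt{\omega}$ and $\sigma_0=\sqrt{-\lambda_0}$.

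The paper's Step 3 has the same slip: its expansion $\tilde a=\kappa^{1/(p-1)}+O(\kappa^{2/(p-1)})$ yields only $\tilde a^2=\kappa^{2/(p-1)}+O(\kappa^{3/(p-1)})$. What your argument (and the paper's) actually proves is the remainder $O\bigl((-\omega-l_{\H_\alpha})^{\frac{p+1}{p-1}}\bigr)$, which agrees with the statement when $p\ge3$.
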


\begin{proof}
In all this proof, we restrict the spaces to real-valued functions
    (without changing notations for simplicity).
We define the map
\[
F : \mathcal{D} (\H_\alpha)\times [0,-l_{\H_\alpha}] \to L^2(\G)
\]
by
\begin{equation*}
F(\phi,\omega)
=
(\H_\alpha+\omega)\phi+|\phi|^{p-1}\phi .
\end{equation*}

\medskip

\noindent \textit{Step 1: Existence of $\phi(\omega)$.}
Notice that
\[
D_\phi F(\phi,\omega)
=
(\H_\alpha+\omega)+p|\phi|^{p-1},
\]
and therefore
\[
D_\phi F(0,\omega)=\H_\alpha+\omega.
\]
Let $P : D(Q_{\H_\alpha}) \to D(Q_{\H_\alpha})$ be defined by
\[
P(\cdot)=\phi_0\langle \phi_0,\cdot\rangle_{L^2 (\G)},
\]
and set $Q=I-P$, where $\phi_0$ is the normalized positive eigenfunction
associated with the eigenvalue $l_{\H_\alpha}$.
We look for zeros of $F$ of the form~\eqref{eqDecompos}, where in particular
$a=\langle \phi(\omega),\phi_0\rangle_{L^2 (\G)}$.
Accordingly, we redefine $F$ by
\[
F(a,\Theta,\omega)
=
F(a\phi_0+\Theta,\omega).
\]
Notice that if $\phi$ is a solution to~\eqref{eqNls}, then so is $-\phi$.
Hence, without loss of generality, we may assume $a\geq 0$.

   Let us define $G$ as the projection of $F$ onto the orthogonal complement
of the direction generated by $\phi_0$, namely
\[
G(a,\Theta,\omega)=QF(a,\Theta,\omega).
\]
After straightforward computations, exploiting the self-adjointness
of $\H_\alpha$, the orthogonality condition
$\langle \Theta,\phi_0\rangle_{L^2 (\G)}=0$, and the fact that $Q\Theta=\Theta$,
one finds that $G$ can be written as
\begin{equation*}
G(a,\Theta,\omega)
=
Q(\H_\alpha+\omega)Q\Theta
+
Q\bigl(|a\phi_0+\Theta|^{p-1}(a\phi_0+\Theta)\bigr).
\end{equation*}
Now, since
\begin{equation*}
D_\Theta G(0,0,-l_{\H_\alpha})
=
Q(\H_\alpha-l_{\H_\alpha})Q
\end{equation*}
is invertible, as $Q$ projects outside the simple kernel of
$(\H_\alpha-l_{\H_\alpha})$, we may apply the implicit function theorem.
As a consequence, there exist $\varepsilon,\eta > 0$ and a set
\[
I=[0,\varepsilon)\times(-l_{\H_\alpha}-\eta,-l_{\H_\alpha}+\eta)
\]
such that for any $(a,\omega)\in I$ there exists a unique function
$\Theta\in C^1(I,D(\H_\alpha))$ satisfying
\begin{equation}\label{eqGZero}
G(a,\Theta(a,\omega),\omega)=0 \text{ for all } (a,\omega)\in I.
\end{equation}
From~\eqref{eqGZero} and the invertibility of the linear operator
$Q(\H_\alpha+\omega)Q$ for $\omega$ sufficiently close to $-l_{\H_\alpha}$,
we obtain
\begin{equation*}
\Theta(a,\omega)
=
\bigl(Q(\H_\alpha+\omega)Q\bigr)^{-1}
\bigl(
Q|a\phi_0+\Theta|^{p-1}(a\phi_0+\Theta)
\bigr).
\end{equation*}
Since the inverse $\bigl(Q(\H_\alpha+\omega)Q\bigr)^{-1}$ is bounded
for $\omega$ sufficiently close to $-l_{\H_\alpha}$, it follows that
\begin{equation*}
\|\Theta(a,\omega)\|_{H^1(\G)}
\lesssim
\|a\phi_0\|_{H^1(\G)}^{p}
+
\|\Theta(a,\omega)\|_{H^1(\G)}^{p}.
\end{equation*}
By possibly shrinking the set $I$, the second term on the right-hand side
can be absorbed into the left-hand side, yielding
\begin{equation}\label{eqThetaControl}
\|\Theta(a,\omega)\|_{H^1(\G)}\lesssim a^{p}.
\end{equation}
Differentiating~\eqref{eqGZero} with respect to $\omega$, we obtain
\begin{equation*}
Q\Theta(a,\omega)
+
Q(\H_\alpha+\omega)Q\,\partial_\omega\Theta(a,\omega)
+
p\,Q\Bigl(
|a\phi_0+\Theta(a,\omega)|^{p-1}
\partial_\omega\Theta(a,\omega)
\Bigr)
=0.
\end{equation*}
Arguing as above, this implies
\begin{align*}
\|\partial_\omega\Theta(a,\omega)\|_{H^1(\G)}
\lesssim\,
&\|\Theta(a,\omega)\|_{H^1(\G)} \\
&+
\Bigl(
\|a\phi_0\|_{H^1(\G)}^{p-1}
+
\|\Theta(a,\omega)\|_{H^1(\G)}^{p-1}
\Bigr)
\|\partial_\omega\Theta(a,\omega)\|_{H^1(\G)}.
\end{align*}
Using~\eqref{eqThetaControl} and possibly restricting $I$ further, the second term on the right-hand side can again be absorbed into the
left-hand side, which yields
\begin{equation}\label{eqThetaControl1}
\|\partial_\omega\Theta(a,\omega)\|_{H^1(\G)}
\lesssim
\|\Theta(a,\omega)\|_{H^1(\G)}
\lesssim a^{p}.
\end{equation}
Similarly, differentiating~\eqref{eqGZero} with respect to $a$, we find
\begin{equation*}
    Q(\H_\alpha+\omega)Q\,\partial_a\Theta(a,\omega)
    +
    p\,Q\Bigl(
    |a\phi_0+\Theta(a,\omega)|^{p-1}
    (\phi_0 + \partial_a\Theta(a,\omega))
    \Bigr)
    =0.
\end{equation*}
Isolating the term involving $\partial_a \Theta$ and utilizing the invertibility of $Q(\H_\alpha+\omega)Q$, we obtain the estimate
\begin{align*}
    \|\partial_a\Theta(a,\omega)\|_{H^1(\G)}
    \lesssim\,
    &\| |a\phi_0+\Theta(a,\omega)|^{p-1} \phi_0 \|_{L^2(\G)} \\
    &+
    \Bigl(
    \|a\phi_0\|_{H^1(\G)}^{p-1}
    +
    \|\Theta(a,\omega)\|_{H^1(\G)}^{p-1}
    \Bigr)
    \|\partial_a\Theta(a,\omega)\|_{H^1(\G)}.
\end{align*}
The first term on the right-hand side is bounded by $O(a^{p-1})$. Absorbing the second term into the left-hand side for small $a$, we conclude
\begin{equation}\label{eqThetaControl2}
    \|\partial_a\Theta(a,\omega)\|_{H^1(\G)} \lesssim a^{p-1}.
\end{equation}

Next, we consider the complementary problem of finding a root of 
$P F(a, \Theta(a,\omega), \omega)$ for $(a,\omega)\in I$. 
Using the definition of $P$ and the orthogonality condition 
$\langle \Theta(a,\omega), \phi_0\rangle_{L^2{\G}} = 0$, one immediately obtains
\begin{equation*}
P F(a, \Theta(a,\omega), \omega) 
=
\phi_0 \Biggl(
a(l_{\H_\alpha} + \omega) 
+
\bigl\langle \phi_0, |a\phi_0 + \Theta(a,\omega)|^{p-1} (a\phi_0 + \Theta(a,\omega)) \bigr\rangle_{L^2 (\G)}
\Biggr).
\end{equation*}
We define the auxiliary function
\begin{equation}\label{eqAuxF}
f(a,\omega)
=
(l_{\H_\alpha} + \omega)
+
\left\langle \phi_0, 
\left| a\phi_0 + \Theta(a,\omega) \right|^{p-1} 
\left( \phi_0 + \frac{\Theta(a,\omega)}{a} \right)
\right\rangle_{L^2 (\G)}.
\end{equation}
From \eqref{eqThetaControl}, it follows that $f$ is continuous on $I$. 
Moreover, 
\begin{equation*}
\partial_\omega f(a,\omega) 
=
1 - p \left\langle \phi_0, |a\phi_0 + \Theta(a,\omega)|^{p-1} \frac{\partial_\omega \Theta(a,\omega)}{a} \right\rangle_{L^2 (\G)}.
\end{equation*}
Exploiting \eqref{eqThetaControl} and \eqref{eqThetaControl1}, we see that 
$\partial_\omega f(a,\omega)$ is also continuous. 
Since $\partial_\omega f(0,l_{\H_\alpha}) = 1$, the implicit function theorem 
guarantees the existence of a uniquely defined $\omega(a)$ for 
$a \in [0, \eps_0)$ for some $\eps_0 \in (0,\eps)$ such that
\begin{equation}\label{eqAuxF1}
f(a, \omega(a)) = 0 \text{ for all } a \in (0, \eps_0).
\end{equation}
In particular, this also implies
\begin{equation}\label{eqPFzero}
P F(a, \Theta(a, \omega(a)), \omega(a)) = 0 \text{ for all } a \in (0, \eps_0).
\end{equation}
Combining \eqref{eqGZero} and \eqref{eqPFzero}, we conclude that for $a \in (0,\eps_0)$,
\begin{equation*}
F(a, \Theta(a,\omega(a)), \omega(a)) = 0.
\end{equation*}

\medskip

\noindent \textit{Step 2: Inverting $\omega(a)$.} 
Since $f(a, \omega(a)) = 0$ for all $a \in (0, \varepsilon_0)$, differentiating with respect to $a$ yields
\begin{equation*}
    \omega'(a) = - \frac{\partial_a f(a, \omega(a))}{\partial_\omega f(a, \omega(a))}.
\end{equation*}
The partial derivative with respect to $a$ is given by
\begin{align*}
    \partial_a f(a,\omega) 
    &= \frac{1}{a} \left\langle \phi_0, p |a\phi_0 + \Theta|^{p-1} \left(\phi_0 + \partial_a \Theta\right) \right\rangle_{L^2(\G)} 
    - \frac{1}{a^2} \left\langle \phi_0, |a\phi_0 + \Theta|^{p-1} (a\phi_0 + \Theta) \right\rangle_{L^2(\G)} \\
    &= (p-1) a^{p-2} \|\phi_0\|_{L^{p+1}(\G)}^{p+1} + O(a^{2p-3}),
\end{align*}
where the estimates on the remainder follow
from \eqref{eqThetaControl} and \eqref{eqThetaControl2}.
Moreover, from \textit{step 1} we have $\partial_\omega f(a, \omega) = 1 + O(a^{p-1})$.
Combining these estimates, we obtain
\begin{equation*}
    \omega'(a) = - (p-1) a^{p-2} \|\phi_0\|_{L^{p+1}(\G)}^{p+1} + O(a^{2p-3}).
\end{equation*}
Thus, $\omega(a)$ is strictly decreasing and therefore invertible on $(0, \eps_0)$.

To obtain the expansion of the inverse map, we first integrate $\omega'(a)$ using the condition $\omega(0) = -l_{\H_{\alpha}}$. This yields
\begin{equation*}
    \omega(a) = -l_{\H_{\alpha}} - a^{p-1} \|\phi_0\|_{L^{p+1}(\G)}^{p+1} + O(a^{2p-2}).
\end{equation*}
Let $J \coloneqq (- l_{\H_{\alpha}} - \delta, - l_{\H_{\alpha}})$ for sufficiently small $\delta>0$,
on which $\tilde a : J \to \R$ is the inverse of $\omega(a)$.
For $\omega \in J$, we invert the relation above, obtaining
\begin{equation*}
    \tilde{a}(\omega)^{p-1}
    = \frac{-\omega - l_{\H_{\alpha}}}{\|\phi_0\|_{L^{p+1}(\G)}^{p+1}} + O(a^{2p-2}).
\end{equation*}
Taking the root and observing that
$a \approx (-\omega - l_{\H_{\alpha}})^{\frac{1}{p-1}}$, we obtain
\begin{equation}\label{eqAOmega}
    \tilde{a}(\omega) 
    = \left( \frac{-\omega - l_{\H_{\alpha}}}{\|\phi_0\|_{L^{p+1}(\G)}^{p+1}} \right)^{\frac{1}{p-1}} 
    + O\Bigl( (-\omega - l_{\H_{\alpha}})^{\frac{2}{p-1}} \Bigr),
\end{equation}
for $\omega \in J$. This concludes the second step.

\medskip
\noindent \textit{Step 3: Computation of the mass.} 
It remains to analyze the dependence of the mass on the frequency. 
We first express the mass as a function $M$ of the parameter $a$, given by
\begin{equation*}
    M(a) = \| a\phi_0 + \Theta(a, \omega(a)) \|_{L^2(\G)}^2 = a^2 + \| \Theta(a, \omega(a)) \|_{L^2(\G)}^2.
\end{equation*}
Differentiating with respect to $a$, we obtain
\begin{equation*}
    M'(a) = 2a + 2 \left\langle \Theta(a, \omega(a)), \frac{d}{da} \Theta(a, \omega(a)) \right\rangle_{L^2(\G)}.
\end{equation*}
By the chain rule, the total derivative of $\Theta$ is
\begin{equation*}
    \frac{d}{da} \Theta(a, \omega(a)) = \partial_a \Theta(a, \omega(a)) + \partial_\omega \Theta(a, \omega(a)) \, \omega'(a).
\end{equation*}
We now estimate the remainder term using the bounds established in the previous steps: 
$\|\Theta\|_{L^2} \lesssim a^p$, $\|\partial_a \Theta\|_{L^2} \lesssim a^{p-1}$, 
$\|\partial_\omega \Theta\|_{L^2} \lesssim a^p$, and $|\omega'(a)| \lesssim a^{p-2}$. 
This yields
\begin{equation*}
    \left\| \frac{d}{da} \Theta(a, \omega(a)) \right\|_{L^2(\G)} 
    \le \|\partial_a \Theta\|_{L^2(\G)} + \|\partial_\omega \Theta\|_{L^2(\G)} |\omega'(a)|
    \lesssim a^{p-1} + a^p \cdot a^{p-2} \lesssim a^{p-1}.
\end{equation*}
Hence, the inner product in the expression of $M'(a)$ is bounded by
\begin{equation*}
    \left| \left\langle \Theta, \frac{d\Theta}{da} \right\rangle_{L^2 (\G)} \right| 
    \le \|\Theta\|_{L^2(\G)} \left\| \frac{d\Theta}{da} \right\|_{L^2(\G)} 
    \lesssim a^p \cdot a^{p-1} = a^{2p-1}.
\end{equation*}
Since $p > 1$, we have $2p-1 > 1$, and thus
\begin{equation*}
    M'(a) = 2a + O(a^{2p-1}) = 2a(1 + o(1)).
\end{equation*}
For $a$ sufficiently small, $M'(a) > 0$. 
Since we established in Step 2 that $\omega'(a) < 0$ for small $a$, the derivative of the mass with respect to frequency is
\begin{equation*}
    \frac{d}{d\omega} m(\omega) = \frac{M'(\tilde a(\omega))}{\omega'(\tilde a(\omega))} < 0.
\end{equation*}
This proves that the map $\omega \mapsto m(\omega)$ is strictly decreasing and invertible near $-l_{\H_\alpha}$.
Finally, substituting the expansion \eqref{eqAOmega} into the expansion of the mass, we obtain
\begin{equation*}
    m(\omega) = \tilde a(\omega)^2 + O(\tilde a(\omega)^{2p}) 
    = \left(- \frac{\omega + l_{\H_{\alpha}}}{\|\phi_0\|_{L^{p+1}(\G)}^{p+1}} \right)^{\frac{2}{p-1}} 
    + O\Bigl( (-\omega - l_{\H_{\alpha}})^{\frac{4}{p-1}} \Bigr).
    \qedhere
\end{equation*}
\end{proof}

\begin{remark}
    The same proof applies to any simple negative eigenvalue of $\H_\alpha$. 
    However, guaranteeing the simplicity of eigenvalues for a generic graph, even for $\delta$-type vertex conditions, is a delicate issue that lies beyond the scope of the present work. 
    On the other hand, for every negative eigenvalue, the existence of the corresponding nonlinear stationary states will be established in the next section.
\end{remark}

We can finally prove Theorem \ref{thmDelta}.

\begin{proof}[Proof of Theorem \ref{thmDelta}]
    The proof follows from Lemma \ref{lemPos} and Propositions \ref{prpSupCrit}, \ref{prpUniVer}, and \ref{prpBifurcation}.
\end{proof}

\section{Multiplicity results}
\label{secMult}

\subsection{Solutions with a prescribed frequency}

We first establish that the action satisfies the Palais-Smale condition.

\begin{proposition}[Palais-Smale condition for $S_\omega$] \label{PSaction}
    Let $\omega > 0$. The functional $S_\omega$ satisfies the Palais-Smale condition at any level $c \in \mathbb{R}$.
    Namely, any sequence $(\psi_n)_n \subseteq \DQH$ with
    \begin{equation*}
        S_\omega(\psi_n) \to c \quad \text{and} \quad S_\omega'(\psi_n) \to 0 \text{ in } \DQH'
    \end{equation*}
    admits a strongly convergent subsequence.
\end{proposition}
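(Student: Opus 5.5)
The strategy is to show first that a Palais--Smale sequence is bounded in $H^1(\G)$, then extract a weak limit, and finally upgrade weak to strong convergence by testing the derivative against $\psi_n-\psi$. Throughout, $\omega>0$ is what produces coercivity at infinity and replaces the missing compactness of $H^1(\G)\hookrightarrow L^2(\G)$ on noncompact graphs.

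\emph{Boundedness.} The argument combines the level and the derivative:
\begin{equation*}
S_\omega(\psi_n)-\tfrac12\langle S_\omega'(\psi_n),\psi_n\rangle=\Bigl(\tfrac1{p+1}-\tfrac12\Bigr)\|\psi_n\|_{L^{p+1}}^{p+1}.
\end{equation*}
This alone has the wrong sign for an upper bound on the $L^{p+1}$ norm. So instead I would use $S_\omega$ directly, in the spirit of Lemma~\ref{lemMinGlob}. The trace estimate \eqref{eqLocalGN} and Young's inequality give
\begin{equation*}
S_\omega(\psi)\ge(\tfrac12-\eps)\|\psi'\|_{L^2}^2+\tfrac\omega2\|\psi\|_{L^2}^2+\tfrac1{p+1}\|\psi\|_{L^{p+1}}^{p+1}-C_\eps\|\psi\|_{L^2(\mathcal K)}^2 .
\end{equation*}
On the bounded set $\mathcal K$, Hölder and Young give $C_\eps\|\psi\|_{L^2(\mathcal K)}^2\le \frac{1}{2(p+1)}\|\psi\|_{L^{p+1}}^{p+1}+R$. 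Hence $S_\omega(\psi)\ge c_0\|\psi\|_{H^1}^2-R$ with $c_0=\min(\tfrac12-\eps,\tfrac\omega2)>0$. In fact $S_\omega$ is coercive, and $S_\omega(\psi_n)\to c$ gives boundedness in $H^1$ and in $L^{p+1}$; only the level information is needed here.

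\emph{Weak limit and strong convergence.} By Proposition~\ref{prpStrongCon}, up to a subsequence $\psi_n\rightharpoonup\psi$ in $H^1(\G)$ with $\psi\in\DQH$ and $V(\psi_n)\to V(\psi)$. Passing to the limit in $S_\omega'(\psi_n)\varphi\to0$ for compactly supported $\varphi$ gives $S_\omega'(\psi)=0$. Then I would compute, with $g(u)=|u|^{p-1}u$,
\begin{equation*}
\langle S_\omega'(\psi_n)-S_\omega'(\psi),\psi_n-\psi\rangle=\|(\psi_n-\psi)'\|_{L^2}^2+\omega\|\psi_n-\psi\|_{L^2}^2+\bigl[\text{vertex terms}\bigr]+\int_\G (g(\psi_n)-g(\psi))\overline{(\psi_n-\psi)} .
\end{equation*}
The left side tends to zero, since $S_\omega'(\psi_n)\to0$ in the dual and $\psi_n-\psi$ is bounded. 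The vertex terms tend to zero because the traces $\Psi_n(v)\to\Psi(v)$: this is a finite-dimensional compact trace, as behind Proposition~\ref{prpStrongCon}. The nonlinear integral is nonnegative by monotonicity of $g$, i.e.\ $\operatorname{Re}\,(g(a)-g(b))\overline{(a-b)}\ge0$. Therefore $\|(\psi_n-\psi)'\|^2_{L^2}+\omega\|\psi_n-\psi\|^2_{L^2}\to0$, which is strong $H^1$ convergence because $\omega>0$.

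\emph{Main obstacle.} The main obstacle is precisely the loss of compactness on the half-lines. It is overcome entirely by the defocusing monotonicity of $g$ and the positivity of $\omega$, so no concentration-compactness is needed. The only points to check carefully are convergence of the vertex terms and the monotonicity inequality for complex $g$, which is standard.
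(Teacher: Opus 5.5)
Your proof is correct, but it reaches strong convergence by a different and more direct route than the paper.

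For boundedness the two arguments are essentially the same. The paper combines $E_{\H}\ge\tau_{\operatorname{min}}$ (Lemma~\ref{lemMinGlob}) with $\omega>0$ to get an $L^2$ bound, and then invokes Proposition~\ref{prpBddEnergy}. You instead prove coercivity of $S_\omega$ in one stroke.

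The difference is in the last step. The paper splits $\psi_n=\psi_n^++\psi_n^-$ along $E_-\oplus E_-^{\perp}$. It uses finite dimensionality of $E_-$ to get $\psi_n^-\to\psi^-$, and nonnegativity of $Q_{\H}$ on $E_-^{\perp}$ plus monotonicity of the nonlinearity to conclude $Q_{\H}(\psi_n-\psi)\to 0$.

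You observe instead that the only indefinite part of $Q_{\H}$ is the vertex term $V$. It depends on finitely many traces, which are continuous linear functionals on $H^1$, so it vanishes along $\psi_n-\psi\rightharpoonup 0$. What remains, $\|(\psi_n-\psi)'\|_{L^2}^2+\omega\|\psi_n-\psi\|_{L^2}^2$, is directly an equivalent $H^1$ norm because $\omega>0$.

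Your version needs no spectral information on $\H$ and makes explicit where $\omega>0$ is used in the convergence step. The paper's version is more abstract: it only uses that the negative spectral subspace is finite dimensional. However, its displayed identity omits the term $\omega\|\psi_n-\psi\|_{L^2}^2$. On a noncompact graph, $Q_{\H}(\psi_n-\psi)\to 0$ alone does not control $\|\psi_n-\psi\|_{L^2}$, since $0\in\sigma_{ess}(\H)$. One must keep that term, which yields $\|\psi_n^+-\psi^+\|_{L^2}\to 0$, and then apply Gårding's inequality (Lemma~\ref{lemGarding}) to get $H^1$ convergence. Your argument handles this point automatically.
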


\begin{proof}
    Since $\omega > 0$
    and since $(S_\omega(\psi_n))_n$ is bounded,
    we deduce that $(E_{\H}(\psi_n))_n$ is bounded
    (recall that $E_{\H}$ is bounded from below,
    according to Lemma~\ref{lemMinGlob}).
    We then deduce, as $\omega > 0$,
    that $(\psi_n)_n$ is bounded in $L^2(\G)$,
    so that Proposition~\ref{prpBddEnergy}
    implies that it is bounded in $H^1(\G)$.
    Hence, up to a subsequence, there exists $\psi \in H^1(\G)$
    such that $\psi_n \rightharpoonup \psi$ weakly in $H^1(\G)$.
    Furthermore, $S_{\omega}'(\psi) = 0$ as weak limits of Palais-Smale sequences are critical points.

    It remains to show that $(\psi_n)_n$ converges strongly to $\psi$ in $\DQH$.
    We first remark that
    \begin{multline}
        \langle S_{\omega}'(\psi_n) - S_{\omega}'(\psi), \psi_n - \psi \rangle_{L^2 (\G)}
        = \langle \H (\psi_n - \psi), \psi_n - \psi \rangle_{L^2 (\G)} \\
        + \langle |\varphi_n|^{p-1} \varphi_n
        - |\varphi|^{p-1} \varphi, \psi_n - \psi \rangle_{L^2 (\G)}.
    \end{multline}
    As $n \to \infty$, the left-hand side converges to $0$.
    Writing $\psi_n = \psi_n^+ + \psi_n^-$
    with $\psi_n^- \in E_-$ and $\psi_n^+ \in (E_-)^{\perp}$,
    we obtain
    \begin{multline*}
        o (1)
        = \langle \H (\psi_n^+ - \psi^+), \psi_n^+ - \psi^+ \rangle_{L^2 (\G)}
        + \langle \H (\psi_n^- - \psi^-), \psi_n^- - \psi^- \rangle_{L^2 (\G)} \\
        + \int_{\G} (|\psi_n (x)|^{p-1} \psi_n (x) - |\psi (x)|^{p-1} \psi (x))(\psi_n (x) - \psi (x)) \, dx.
    \end{multline*}
    Now, we remark that:
    \begin{itemize}
        \item since $\psi_n \rightharpoonup \psi$, we have $\psi_n^- \to \psi^-$ strongly
        as $E_-$ is finite dimensional;
        thus, $\langle \H (\psi_n^- - \psi^-), \psi_n^- - \psi^- \rangle_{L^2 (\G)} \to 0$;
        \item the nonlinear term is nonnegative
        due to the monotonicity of $s \mapsto |s|^{p-1}s$;
        \item the term involving $\psi_n^+ - \psi^+$ is also nonnegative.
    \end{itemize}
    
    From those facts, we deduce that all terms individually converge to zero. Hence,
    \begin{equation*}
        \langle \H (\psi_n^+ - \psi^+), \psi_n^+ - \psi^+ \rangle_{L^2 (\G)} \to 0
    \end{equation*}
    and
    \begin{equation*}
        \langle \H (\psi_n^- - \psi^-), \psi_n^- - \psi^- \rangle_{L^2 (\G)} \to 0
    \end{equation*}
    so that
    \begin{equation*}
        \langle \H (\psi_n - \psi), \psi_n - \psi \rangle_{L^2 (\G)} \to 0,
    \end{equation*}
    showing that $\psi_n$ converges strongly to $\psi$ in $\DQH$.
\end{proof}

\begin{proof}[Proof of Theorem~\ref{thmMultAction}]
    We first prove the result in the case where
    the matrices $\Lambda_v$ are real symmetric
    and restricting the action functional
    $S_\omega$ to the real subspace
    $X_{\R} \coloneqq \DQH \cap H^1(\G, \R)$.
    We apply Lusternik-Schnirelmann theory
    to the functional $S_\omega \in C^1(X_{\R}, \R)$,
    noting that it is even. In this case, we will show
    the existence of $k$ pairs of real-valued solutions
    $\pm u_1, \dotsc, \pm u_m$.
    
    We define the family of symmetric subsets
    of genus at least $j$ by
    \begin{equation*}
        \Sigma_j
        \coloneqq \bigl\{ A \subseteq \DQH \setminus \{0\}
        \mid  A \text{ is symmetric, compact, and }
        \gamma(A) \ge j \bigr\},
    \end{equation*}
    where the Krasnoselskii \textit{genus} $\gamma(A)$ of a nonempty symmetric compact set $A$ is defined
    as the smallest integer $k \ge 1$ such that there exists
    a continuous odd map $h : A \to \R^k \setminus \{0\}$.
    If no such $k$ exists, we define $\gamma(A) = \infty$.
    Finally, by convention, $\gamma(\emptyset) = 0$.

    We then define the minimax levels for $1 \le j \le k$ by
    \begin{equation}
        \label{def_cj_action}
        c_j \coloneqq \inf_{A \in \Sigma_j} \sup_{u \in A} S_{\omega}(u).
    \end{equation}
    
    \medskip
    
    \noindent \textit{Step 1:
    The levels are well-defined and negative.} Since $S_{\omega}$ is bounded from below,
    all $c_j$ are finite.
    Next, we verify that $c_j < 0$ for all $1 \le j \le k$.
    For any $\psi \in E_-$ with $\|\psi\|_{L^2 (\G)} = 1$, we have
    \begin{equation*}
        \langle \H\psi, \psi \rangle_{L^2 (\G)} + \omega \|\psi\|_{L^2 (\G)}^2
        \le \lambda_k + \omega
        < 0,
    \end{equation*}
    since $\omega < -\lambda_k$.
    Then, let us consider the sphere
    \begin{equation*}
        \mathcal{M}_\rho \coloneqq \{ \psi \in E_- \mid \| \psi \|_{L^2 (\G)} = \rho \}.
    \end{equation*}
    For $\psi \in \mathcal{M}_\rho$, Lemma~\ref{lemEquivNorms} implies that
    \begin{equation*}
        S_\omega(\psi)
        \le \frac{1}{2}(\lambda_k + \omega)\rho^2
        + \Gamma^{p+1} \rho^{p+1}.
    \end{equation*}
    Since $\lambda_k + \omega < 0$ and $p > 1$,
    for $\rho > 0$ sufficiently small,
    we have $\sup_{\mathcal{M}_\rho} S_\omega < 0$.
    Since $\mathcal{M}_\rho \subset E_-$ is a symmetric set
    of genus $k$ (by \cite[Proposition 7.7]{rabinowitz1986minimax}),
    we have $\mathcal{M}_\rho \in \Sigma_k$.
    Since $\Sigma_k \subseteq \Sigma_j$ for $j \le k$, we obtain
    \begin{equation*}
        c_1 \le c_2 \le \dots \le c_k
        \le \sup_{\mathcal{M}_\rho} S_\omega < 0.
    \end{equation*}

    \medskip
    
    \noindent \textit{Step 2: Convergence.}
    By Proposition~\ref{PSaction},
    the functional $S_\omega$ satisfies
    the Palais-Smale condition at every level $c \in \R$.
    Since the levels $c_j$ are finite,
    standard Lusternik-Schnirelmann theory
    (see e.g.,\,\cite[Theorem 5.7]{rabinowitz1986minimax}
    or \cite[Proposition
    10.8]{Ambrosetti_Malchiodi_2007}) applies.
    Hence, each level $c_j$
    is a critical value of $S_\omega$.
    
    If the levels are distinct ($c_1 < c_2 < \dots < c_k$), we obtain at least $k$ distinct pairs of critical points.
    If some levels coincide,
    say $c_j = c_{j+1} = \dots = c_{j+m} = c$, then the set of critical points $K_c$
    at level $c$ has genus $\gamma(K_c) \ge m+1$
    (see again \cite[Proposition
    10.8]{Ambrosetti_Malchiodi_2007}).
    If $m \ge 1$, then $\gamma(K_c) \ge 2$,
    which implies that $K_c$ is infinite
    (as finite sets have genus 1).
    In all cases, there exist at least $k$
    distinct pairs of solutions.
    We finally note that, since $c_j < 0$
    and $S_\omega(0)=0$, these solutions are non-zero.
    
    \medskip
    
    \noindent \textit{Extension to the complex-valued case.}
    If we relax the assumption on the vertex conditions
    to be Hermitian (allowing complex entries in $\Lambda_v$),
    the operator $\H$ is self-adjoint
    on the complex Hilbert space $L^2(\G, \C)$.
    In this setting, the functional $S_\omega$
    is invariant under the continuous
    $S^1$-action $u \mapsto e^{i\theta}u$ rather
    than the $\Z_2$-action mapping $u$ to $\pm u$.
    Consequently, solutions appear as $S^1$-orbits
    $\mathcal{O}_u = \{ e^{i\theta}u \mid \theta \in [0, 2\pi) \}$.
    
    To prove the existence of $k$ distinct orbits, we replace the Krasnoselskii genus with
    the \textit{geometrical index} (or $S^1$-index)
    introduced by Benci \cite{Benci1981}.
    Let $\mathcal{E}$ be the family of closed,
    $S^1$-invariant subsets of $\DQH \setminus \{0\}$.
    For $A \in \mathcal{E}$,
    the index $i_{S^1}(A)$ is defined as the smallest integer
    $k \ge 1$ such that there exists a continuous
    equivariant map $h : A \to \C^k \setminus \{0\}$.
    (Here, equivariant means $h(e^{i\theta}u) = e^{i\theta}h(u)$
    for all $u \in A$ and $\theta \in [0, 2\pi)$).
    If no such $k$ exists, we define $i_{S^1}(A) = \infty$,
    and by convention $i_{S^1}(\emptyset) = 0$.
    For a detailed presentation of this index and the associated critical point theory,
    we refer to \cite[Chapter 9]{Costa2007} (see in particular Theorem 3.5).
    
    Adapting the previous proof, we note that
    the negative eigenspace $E_-$ corresponding
    to eigenvalues $\lambda_1, \dots, \lambda_k$
    now has \textit{complex} dimension $k$.
    Since the $S^1$-index of the unit sphere
    in $\C^k$ is $k$, the minimax levels $c_j$
    (defined via $i_{S^1}$ instead of $\gamma$)
    are well-defined and strictly negative.
    Equivariant Lusternik-Schnirelmann theory then implies
    the existence of at least $k$ distinct orbits of solutions.
\end{proof}

\subsection{Normalized solutions with a prescribed mass}

We will now turn to the proof of Theorem~\ref{thmMultNormalized}.
This time, the compactness is more delicate.
As we will see in Proposition~\ref{propPSsignMultipliers} below, the sign of the ``approximate Lagrange multipliers''
associated to a Palais-Smale sequence for $E_{\H}$ on the mass constraint will play an important role,
in a similar way to the role it played for energy
ground states (see e.g.\,Lemma~\ref{lemPosMult}).

As for Proposition \ref{prpMuOne}, we will be able to prove compactness when $\mu$ is small and one probably cannot expect to be able to remove the small-mass assumption in general.

Given $\mu > 0$, we define
\begin{equation*}
    \MM_{\mu}
    \coloneqq \bigl\{ \psi \in \DQH \mid
    \| \psi \|_{L^2(\G)} = \mu \bigr\}.
\end{equation*}
We note that $\MM_{\mu}$ is a codimension one
smooth submanifold of $\DQH$.

\begin{proposition}
    \label{propPSsignMultipliers}
    Let $(\psi_n)_n \subseteq \MM_{\mu}$
    be a Palais-Smale sequence for $E_{\H}$
    (constrained on $\MM_{\mu}$)
    at an energy level $c \in \R$. It satisfies
    \begin{equation}
        \label{psi_n_PS}
        \H \psi_n + \omega_n \psi_n
        + |\psi_n|^{p-1}\psi_n = \zeta_n
    \end{equation}
    with $\zeta_n \to 0$ in $\DQH'$
    for a certain sequence $(\omega_n)_n \subseteq \R$
    of Lagrange multipliers\footnote{Note that
    we see $\MM_{\mu}$ as a codimension one real submanifold
    of $\DQH$, so that the $(\omega_n)_n$ are real.}.
    Assume that $(\omega_n)_n$
    is bounded from above and satisfies
    $\liminf_n \omega_n > 0$.
    Then, up to a subsequence,
    $(\psi_n)_n$ converges strongly in $H^1(\G)$
    to some $\psi \in \DQH \cap \MM_{\mu}$
    that is a critical point of $E_{\H}$
    constrained to $\MM_{\mu}$.
\end{proposition}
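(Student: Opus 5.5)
We first obtain uniform bounds. Since $\|\psi_n\|_{L^2(\G)}=\mu$ and $E_{\H}(\psi_n)\to c$, Proposition~\ref{prpBddEnergy} shows that $(\psi_n)_n$ is bounded in $H^1(\G)$. Testing \eqref{psi_n_PS} against $\psi_n$ gives
\[
\omega_n\mu^2 = -Q_{\H}(\psi_n) - \|\psi_n\|_{L^{p+1}(\G)}^{p+1} + \langle \zeta_n,\psi_n\rangle.
\]
This confirms that $(\omega_n)_n$ is bounded; the upper bound is in any case assumed. Up to a subsequence, $\omega_n\to\omega$ with $\omega\ge\liminf_n\omega_n>0$. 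By Proposition~\ref{prpStrongCon} we may also assume $\psi_n\rightharpoonup\psi$ weakly in $H^1(\G)$, with $\psi\in\DQH$ and $V(\psi_n)\to V(\psi)$. Passing to the limit in \eqref{psi_n_PS} tested against a fixed $\varphi\in\DQH$, we get that $\psi$ solves \eqref{eqNls} with frequency $\omega$. For the nonlinear term, we use local strong convergence in $L^q$ on bounded pieces of the graph together with the uniform $L^{p}$ bounds on the tails.

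The core step is strong convergence, and it mirrors Proposition~\ref{PSaction}. Subtracting the limit equation from \eqref{psi_n_PS} and testing against $w_n\coloneqq\psi_n-\psi$, which is bounded in $H^1(\G)$, gives
\[
Q_{\H}(w_n)+\omega\|w_n\|_{L^2(\G)}^2+\int_{\G}\bigl(|\psi_n|^{p-1}\psi_n-|\psi|^{p-1}\psi\bigr)\overline{w_n}
= \langle\zeta_n,w_n\rangle-(\omega_n-\omega)\langle\psi_n,w_n\rangle = o(1).
\]
We take the real part throughout. The nonlinear integral is nonnegative by monotonicity of $s\mapsto|s|^{p-1}s$. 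To handle the quadratic part, we use that $V(w_n)\to 0$. This follows from the weak continuity of vertex traces: the trace map from $H^1$ to the finite-dimensional space of vertex values is compact, so $\Psi_n(v)\to\Psi(v)$. Consequently $Q_{\H}(w_n)=\|w_n'\|_{L^2(\G)}^2+o(1)$.

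Hence
\[
\|w_n'\|_{L^2(\G)}^2+\omega\|w_n\|_{L^2(\G)}^2\le o(1).
\]
Since $\omega>0$, this forces $\|w_n\|_{H^1(\G)}\to0$. The positivity of the multiplier is used exactly here: it replaces the missing compactness of $L^2$ on the half-lines and rules out mass escaping to infinity (dichotomy). Alternatively, one can split along $E_-\oplus E_-^\perp$ as in Proposition~\ref{PSaction}. The direct version above seems cleaner, since the vertex part vanishes in the limit.

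Finally, strong $L^2$ convergence gives $\|\psi\|_{L^2(\G)}=\mu$, so $\psi\in\MM_\mu$. Since $\psi$ solves \eqref{eqNls} with multiplier $\omega$, it is a constrained critical point of $E_{\H}$ on $\MM_\mu$. The main obstacle I expect is justifying $V(w_n)\to0$, which follows from Proposition~\ref{prpStrongCon} applied to the vertex traces, and passing to the limit in the nonlinear term on the infinite edges. If the tail handling proves delicate, I would instead use the Brezis–Lieb lemma for the $L^{p+1}$ norm.
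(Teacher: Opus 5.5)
Your proof is correct, but it takes a different route from the paper.

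The paper first proves tightness by hand. On each half-line it tests the perturbed equation against $\theta_R\psi_n$, where $\theta_R$ is a cutoff vanishing on $[0,R]$. It integrates by parts and uses $\omega_n\ge\nu>0$ to obtain $\nu\int\theta_R|\psi_{n,e}|^2\le\frac12\int_R^{2R}|\theta_R''|\,|\psi_{n,e}|^2+o(1)$, which is small for $R$ large by local compactness. This gives strong $L^2$ convergence. Only then does it test against $\psi_n$, using convergence of the $L^{p+1}$ norms, to get $Q_{\H}(\psi_n)\to Q_{\H}(\psi)$ and hence $H^1$ convergence.

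You instead identify the limit equation first, subtract it, and test against $w_n=\psi_n-\psi$. Monotonicity of $s\mapsto|s|^{p-1}s$ discards the nonlinear term. The vertex form $V(w_n)$ vanishes because the trace map has finite rank. Coercivity of $\|w'\|_{L^2(\G)}^2+\omega\|w\|_{L^2(\G)}^2$ for $\omega>0$ then finishes the argument.

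What each route buys:
\begin{itemize}
\item Your route is shorter and needs neither the cutoff computation nor strong $L^{p+1}$ convergence. It also shows the statement is really a compactness property of the action at positive frequency. Since $\omega_n\to\omega>0$, one has $S_\omega'(\psi_n)=\zeta_n-(\omega_n-\omega)\psi_n\to0$ in $\DQH'$ and $S_\omega(\psi_n)\to c+\frac{\omega}{2}\mu^2$. So $(\psi_n)_n$ is a Palais--Smale sequence for $S_\omega$.
\item The paper's route pinpoints where the sign of the multiplier matters: only far out on the half-lines, to stop mass from escaping to infinity. This fits its concentration-compactness picture, where dichotomy is the only possible loss of compactness.
\end{itemize}

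One minor remark on your limit step. When you pass to the limit in the nonlinear term against a fixed test function $\varphi\in\DQH$, the tail control comes from the smallness of $\varphi$ in $L^2$ on $[R,\infty)$, combined with the uniform $L^\infty$ and $L^2$ bounds on $\psi_n$. It does not come from tail bounds on $\psi_n$ themselves, which are not available at that stage.
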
 

\begin{proof}
    \textit{Step 1: Boundedness.} Using Proposition~\ref{prpBddEnergy},
    we deduce that $(\psi_n)_n$ is bounded
    in $H^1(\mathcal{G})$.
    Proposition~\ref{prpStrongCon}
    implies that, up to a subsequence,
    there exists $\psi \in H^1(\mathcal{G})$
    such that $\psi_n \rightharpoonup \psi$ weakly in $H^1 (\G)$.
    By the Rellich-Kondrachov theorem, one has
    $\psi_n \to \psi$ strongly in $L^2_{\operatorname{loc}}(\mathcal{G})$.
    
    \medskip

    \noindent \textit{Step 2: Strong convergence in $L^2(\G)$.}
    Let us fix $\eps > 0$.
    To prove global strong convergence,
    we must establish \textit{tightness}, i.e.,
    that the mass of $\psi_n$ does not escape to infinity
    along the half-lines.
    Let $e \in \mathcal{E}_{\operatorname{inf}}$ identified with $[0, \infty)$.
    We want to show that there exists
    $R_e > 0$ such that for all $n$ large enough,
    we have $\| \psi_{n, e} \|_{L^2([R_e, +\infty))} \le \eps$.
    
    Let $\tilde \theta_R \in C^\infty([0, \infty))$
    be a smooth nonnegative cut-off function
    such that $\tilde \theta_R(x) = 0$
    for $x \le R$ and $\tilde \theta_R(x) = 1$ for $x \ge 2R$.
    Let $\theta_{R} = (\theta_{R, j})_{j \in \mathcal{E}}:
    \G \to \R$ be defined by $\theta_{R, j} \equiv 0$
    for any $j \in \mathcal{E}$, $j \neq e$,
    while $\theta_{R, e} = \tilde \theta_R$.
    Testing equation \eqref{psi_n_PS}
    against $\theta_R \psi_n \in \DQH$ yields
    \begin{multline} \label{eqTestCutoff}
        \langle \H \psi_n, \theta_R \psi_n \rangle_{L^2 (\G)}
        +  \left( \omega_n \int_0^\infty \theta_R (x)
        |\psi_{n, e} (x)|^2 dx
        + \int_0^\infty \theta_R (x)
        |\psi_{n, e} (x)|^{p+1} dx \right) \\
        = \langle \zeta_n, \theta_R \psi_n \rangle_{L^2 (\G)}.
    \end{multline}
    
    Integrating by parts on the first term
    (and noting that boundary terms at $0$ vanish
    due to the support of $\theta_R$), we obtain
    \begin{equation*}
        \langle \H \psi_n, \theta_R \psi_n \rangle_{L^2 (\G)}
        = \int_0^\infty \theta_R (x) |\psi_{n, e}' (x)|^2 dx
        - \frac{1}{2} \int_0^\infty \theta_R''(x) |\psi_{n, e} (x)|^2 dx.
    \end{equation*}
    Substituting this back into \eqref{eqTestCutoff} gives
    \begin{multline*}
        \int_0^\infty \theta_R (x) |\psi_{n,e}' (x)|^2 dx
        + \omega_n \int_0^\infty \theta_R (x) |\psi_{n, e} (x)|^2 dx
        + \int_0^\infty \theta_R (x) |\psi_{n, e} (x)|^{p+1} dx \\
        = \frac{1}{2} \int_0^\infty \theta_R'' (x) |\psi_{n,e} (x)|^2 dx
        + \langle \zeta_n, \theta_R (x) \psi_n \rangle_{L^2 (\G)}.
    \end{multline*}
    
    By assumption, $\liminf_n \omega_n > 0$.
    Hence, there exists $\nu > 0$ such that
    $\omega_n \ge \nu$ for all $n$ large enough. We obtain 
    \begin{equation*}
        \nu \int_0^\infty \theta_R (x) |\psi_{n, e} (x)|^2 dx
        \le \frac{1}{2} \int_R^{2R} |\theta_R'' (x)|
        |\psi_{n, e} (x)|^2 dx
        + \|\zeta_n\|_{(H^1 (\G))'}
        \|\theta_R \psi_n\|_{H^1 (\G)}.
    \end{equation*}
    The second term on the right-hand side vanishes
    as $n \to \infty$ because $\zeta_n \to 0$ in $(H^1 (\G))'$
    and $(\psi_n)_n$ is bounded in $H^1(\G)$.
    For the first term, notice that the integral
    is over the compact interval $[R, 2R]$.
    Since $\psi_n \to \psi$ strongly on compact sets,
    \[
    \limsup_{n \to \infty} \int_R^{2R} |\theta_R'' (x)| |\psi_{n, e} (x)|^2 dx
    = \int_R^{2R} |\theta_R'' (x)| |\psi_{e} (x)|^2 dx.
    \]
    Since $\psi \in L^2(\mathcal{G})$, we can choose $R$ large enough such that this integral
    is smaller than $\frac{\eps^2}{\nu}$.
    Consequently, there exists $R > 0$ such that for all
    $n$ large enough, one has
    \begin{equation*}
        \int_{2R}^\infty |\psi_{n, e} (x)|^2 dx
        \le \eps^2.
    \end{equation*}
    Taking $R_e \coloneqq 2R$ shows tightness on $e$.

    Doing so for all infinite edges of $\G$,
    we deduce that $(\psi_n)_n$ is tight.
    Combined with the local strong convergence on compact sets,
    this implies that $(\psi_n)_n$ converges strongly to
    $\psi$ in $L^2(\mathcal{G})$.

    \medskip
     
    \noindent \textit{Step 3: Strong convergence in $H^1 (\G)$.}
    Testing \eqref{psi_n_PS} with $\psi_n$ yields
    \begin{equation*}
        \langle \H \psi_n, \psi_n \rangle_{L^2 (\G)}
        + \omega_n \|\psi_n\|_{L^2 (\G)}^2
        + \|\psi_n\|_{L^{p+1} (\G)}^{p+1}
        = \langle \zeta_n, \psi_n \rangle_{L^2 (\G)} = o(1).
    \end{equation*}
    We know that $\psi_n \to \psi$ strongly in $L^2 (\G)$ and $L^{p+1} (\G)$ (by interpolation and boundedness in $L^\infty (\G)$).
    Since $(\omega_n)_n$ is bounded, up to a subsequence, there exists $\omega \in \R$ such that $\omega_n \to \omega$.
    Thus, the terms $\omega_n \|\psi_n\|_{L^2 (\G)}^2$ and $\|\psi_n\|_{L^{p+1} (\G)}^{p+1}$ converge to their limits involving $\psi$.
    
    Therefore,
    \begin{equation*}
        \langle \H \psi_n, \psi_n \rangle_{L^2 (\G)}
        \to -\omega \|\psi\|_{L^2 (\G)}^2 - \| \psi \|_{L^{p+1} (\G)}^{p+1}
        = \langle \H \psi, \psi \rangle_{L^2 (\G)}.
    \end{equation*}
    Thus, $Q_{\H}(\psi_n) \to Q_{\H}(\psi)$,
    so that strong convergence in $\DQH$ holds,
    and so does convergence in $H^1 (\G)$
    using Gårding's inequality (Lemma \ref{lemGarding}).
    
    Finally, taking limits in \eqref{psi_n_PS}, we obtain
    \begin{equation*}
        \H \psi + \omega \psi + |\psi|^{p-1}\psi = 0,
    \end{equation*}
    confirming that $\psi$ is a critical point
    constrained on $\MM_{\mu}$.
\end{proof}

We can finally prove Theorem \ref{thmMultNormalized}.

\begin{proof}[Proof of Theorem~\ref{thmMultNormalized}]
    We first assume that the vertex conditions
    are defined by real symmetric matrices
    and search for critical points in the real subspace
    $X_{\R} \coloneqq \DQH \cap H^1(\G, \R)$.
    As for the proof of Theorem~\ref{thmMultAction}, we will use Lusternik-Schnirelmann theory,
    applied to the functional $E_{\H}$
    constrained on $\MM_{\mu} \cap X_{\R}$.
     
    We define $\tilde{\mu}$ by
    \begin{equation}
        \label{defMuTilde}
        \tilde{\mu}
        \coloneqq \left( \frac{-\lambda_k}{2\Gamma^{p+1}}
        \right)^{\frac{1}{p-1}},
    \end{equation}
    where $\Gamma$ is defined in Lemma~\ref{lemEquivNorms}.

    Given an integer $j \ge 1$, we define $\Sigma_j$
    as the family of compact,
    symmetric subsets $A \subset \MM_{\mu}$
    with genus $\gamma(A) \ge j$.
    We define the minimax levels for $1 \le j \le k$ by
    \begin{equation}
        c_j(\mu)
        \coloneqq \inf_{A \in \Sigma_j} \sup_{u \in A} E_{\H}(u).
    \end{equation}
    We will show that, for all $\mu \in (0, \tilde{\mu})$
    and all $1 \le j \le k$, the level $c_j(\mu)$ is negative
    and is a critical level for $E_{\H}$ on $\MM_{\mu}$.

    \medskip
    
    \noindent \textit{Step 1: Energy estimates.}
    Let us obtain quantitative estimates on $c_j$
    in terms of $\mu$.
    \begin{itemize}
        \item \textit{Upper Bound:}
        For $1 \le j \le k$,
        let $V_j \coloneqq \lspan\{\phi_1, \dots, \phi_j\}$.
        The set $A = V_j \cap \MM_{\mu}$ has genus $j$.
        For $\psi \in A$, the quadratic form satisfies
        $\langle \H \psi, \psi \rangle_{L^2 (\G)} \le \lambda_j \mu^2$.
        Using Lemma~\ref{lemEquivNorms}, one has
        \begin{equation} \label{eq:UpBound}
            c_j(\mu) \le \frac{1}{2} \lambda_j \mu^2
            + \frac{\Gamma^{p+1}}{p+1} \mu^{p+1}
            \le \frac{1}{2} \lambda_k \mu^2
            + \frac{\Gamma^{p+1}}{p+1} \mu^{p+1}.
        \end{equation}
        Recalling \eqref{defMuTilde} and the fact that $\lambda_k < 0$,
        one has $c_j(\mu) < 0$ for all $1 \le j \le k$.
        
        \item \textit{Lower Bound:} By the Intersection Lemma
        (see e.g.\,\cite[Proposition
        7.8]{rabinowitz1986minimax}),
        any $A \in \Sigma_j$ intersects
        $V_{j-1}^\perp
        = \lspan\{\phi_1, \dots, \phi_{j-1}\}^\perp$.
        Let $\psi \in A \cap V_{j-1}^\perp$. Then,
        $\langle \H \psi, \psi \rangle_{L^2 (\G)}
        \ge \lambda_j \| \psi \|_{L^2 (\G)}^2$.
        Since the nonlinear term is non-negative, we obtain
        \begin{equation} \label{eq:LowBound}
            c_j(\mu) \ge \frac{1}{2} \lambda_j \mu^2.
        \end{equation}
    \end{itemize}
    Combining \eqref{eq:UpBound} and \eqref{eq:LowBound} yields
    \begin{equation} \label{eq:c_j_est}
        \bigl| c_j(\mu) - \frac{1}{2} \lambda_j \mu^2 \bigr|
        \le \frac{\Gamma^{p+1}}{p+1} \mu^{p+1}.
    \end{equation}

    \noindent \textit{Step 2: The Palais-Smale condition
        at the minimax levels.}
    Let $(\psi_n)_n \subseteq \MM_{\mu}$
    be a Palais-Smale sequence for $E_{\H}$
    (constrained on $\MM_{\mu}$) at level $c_j(\mu)$.
    It satisfies
    \begin{equation*}
        \H \psi_n + \omega_n \psi_n
        + |\psi_n|^{p-1}\psi_n
        = \zeta_n
    \end{equation*}
    for some sequences $(\omega_n)_n \subseteq \R$
    and $(\zeta_n)_n \subseteq (H^1 (\G))'$
    with $\zeta_n \to 0$ in $(H^1 (\G))'$.
    Taking the $L^2$-scalar product with $\psi_n$ yields
    \begin{equation}
        \label{estLambdas}
        \omega_n \mu^2
        + 2 E(\psi_n)
        + \frac{p-1}{p+1} \|\psi_n\|_{p+1}^{p+1}
        = o_n(1),
    \end{equation}
    since $(\psi_n)_n$ is bounded in $L^2 (\G)$.
    Since $(\psi_n)_n$ is a Palais-Smale sequence
    at level $c_j(\mu)$, one has
    $E(\psi_n) = c_j(\mu) + o_n(1)$.
    Recalling \eqref{eq:c_j_est}
    and using Lemma~\ref{lemEquivNorms},
    \eqref{estLambdas} yields
    \begin{equation*}
        \mu^2 |\omega_n + \lambda_j|
        \le |\omega_n \mu^2 + 2E(\psi_n)|
        + \frac{p-1}{p+1} \Gamma^{p+1} \mu^{p+1} + o_n(1)
        \le \Gamma^{p+1} \mu^{p+1} + o_n(1),
    \end{equation*}
    so that
    \begin{equation*}
        |\omega_n + \lambda_j|
        \le \Gamma^{p+1} \mu^{p-1} + o_n(1),
    \end{equation*}
    for all $1 \le j \le k$ and all $n$ large enough.
    By the choice of $\tilde{\mu}$, one has
    $\Gamma^{p+1} \mu^{p-1} \le \frac{-\lambda_k}{2}$.
    Recalling that $\lambda_k$ is negative, this implies
    $\omega_n \ge \frac{-\lambda_k}{2} > 0$
    for $n$ large enough.
    Proposition~\ref{propPSsignMultipliers} thus implies that, up to a subsequence,
    $\psi_n$ converges strongly
    to some $\psi \in \DQH \cap \MM_{\mu}$,
    which is a critical point at level $c_j(\mu)$.
    
    \medskip
    
    \noindent \textit{Step 3: Conclusion.}
    We have shown that the Palais-Smale condition
    holds for all levels $c_j(\mu)$ with $1 \le j \le k$.
    Using the Lusternik-Schnirelmann theory (see
    e.g.\,\cite[Proposition 10.8]{Ambrosetti_Malchiodi_2007}),
    we deduce that all levels $c_1, \dotsc, c_k$
    are critical levels of $E_{\H}$ constrained on $\MM_{\mu}$.
    We deduce the claimed multiplicity result
    as in the action case.
    
    \medskip
    
    \noindent \textit{Extension to the complex-valued case.}
    If the vertex conditions are Hermitian,
    we consider the problem in the complex space $H^1(\G, \C)$.
    The proof extends verbatim by replacing
    the $\Z_2$-genus with the $S^1$-index
    $i_{S^1}$ as in the proof of Theorem~\ref{thmMultAction}.
    Indeed, the Lusternik-Schnirelmann theory holds
    for a general ``index'' and not just the genus
    (see the proof of \cite[Proposition 10.8]{Ambrosetti_Malchiodi_2007}).
    
    Crucially, the energy estimates in \textit{step 1}
    rely only on the modulus $|\psi|$
    and the quadratic form $\langle \H \psi, \psi \rangle_{L^2 (\G)}$,
    both of which remain real-valued and satisfy
    the same bounds in the complex setting.
    Since $V_{j-1}$ has complex dimension $j-1$, the intersection
    of sets with $V_{j-1}^\perp$ remains non-empty.
    Moreover, the Lagrange multipliers remain positive
    since $\mathcal{M}_{\mu}$ is a codimension one \emph{real} manifold.
    Finally, we note that the estimates in \textit{step 2} depend only on norms
    and energy levels, guaranteeing that $\omega_n$
    remains positive, allowing
    the application of Proposition~\ref{propPSsignMultipliers}.
    This yields $k$ distinct $S^1$-orbits of solutions.
\end{proof}

\printbibliography

@misc{BaBoDoTe25NPI,
 author = {Barbera, Daniele and Boni, Filippo and Dovetta, Simone and Tentarelli, Lorenzo},
 title = {Normalized solutions of one-dimensional defocusing {NLS} equations with nonlinear point interactions},
 year = {2025},
 howpublished = {Preprint, {arXiv}:2503.21700 [math.{AP}] (2025)},
 url = {https://arxiv.org/abs/2503.21700},
 arXiv = {arXiv:2503.21700}
}

@article{BoDo22Nl,
 author = {Boni, Filippo and Dovetta, Simone},
 title = {Doubly nonlinear {Schr{\"o}dinger} ground states on metric graphs},
 fjournal = {Nonlinearity},
 journal = {Nonlinearity},
 issn = {0951-7715},
 volume = {35},
 number = {7},
 pages = {3283--3323},
 year = {2022},
 language = {English},
 doi = {10.1088/1361-6544/ac7505},
 zbMATH = {7554805},
 Zbl = {1492.35400}
}

@article{AdBoDo22Nl,
 author = {Adami, Riccardo and Boni, Filippo and Dovetta, Simone},
 title = {Competing nonlinearities in {NLS} equations as source of threshold phenomena on star graphs},
 fjournal = {Journal of Functional Analysis},
 journal = {J. Funct. Anal.},
 issn = {0022-1236},
 volume = {283},
 number = {1},
 pages = {34},
 note = {Id/No 109483},
 year = {2022},
 language = {English},
 doi = {10.1016/j.jfa.2022.109483},
 zbMATH = {7510673},
 Zbl = {1486.35400}
}

@article{DuSh26,
title = {{Ground states for the defocusing nonlinear Schrödinger equation on non-compact metric graphs}},
journal = {Comm. Pure Appl. Anal.},
pages = {},
year = {2026},
issn = {1534-0392},
doi = {10.3934/cpaa.2026029},
url = {https://www.aimsciences.org/article/id/69a54f2d1c9579521d0890e5},
author = {Élio Durand-Simonnet and Boris Shakarov}
}

@article {KiKePe11,
    AUTHOR = {Kirr, E. and Kevrekidis, P. G. and Pelinovsky, D. E.},
     TITLE = {Symmetry-breaking bifurcation in the nonlinear {S}chr\"odinger
              equation with symmetric potentials},
   JOURNAL = {Comm. Math. Phys.},
  FJOURNAL = {Communications in Mathematical Physics},
    VOLUME = {308},
      YEAR = {2011},
    NUMBER = {3},
     PAGES = {795--844},
      ISSN = {0010-3616,1432-0916},
   MRCLASS = {35Q55 (35B20)},
  MRNUMBER = {2855541},
MRREVIEWER = {Hideo\ Takaoka},
       DOI = {10.1007/s00220-011-1361-3},
       URL = {https://doi.org/10.1007/s00220-011-1361-3},
}

@article{AdGaSp25Pot,
 author = {Adami, Riccardo and Gallo, Ivan and Spitzkopf, David},
 title = {Ground states for the {NLS} on non-compact graphs with an attractive potential},
 fjournal = {Networks and Heterogeneous Media},
 journal = {Netw. Heterog. Media},
 issn = {1556-1801},
 volume = {20},
 number = {1},
 pages = {324--344},
 year = {2025},
 language = {English},
 doi = {10.3934/nhm.2025015},
 zbMATH = {8038705},
 Zbl = {1566.35204}
}

@Misc{CoSh24,
    Author = {Le Coz, Stefan and Shakarov, Boris},
    Title = {Ground states on a fractured strip and one dimensional reduction},
    Year = {2024},
    HowPublished = {Preprint, {arXiv}:2411.18187 [math.{AP}]},
    URL = {https://arxiv.org/abs/2411.18187},
    arXiv = {arXiv:2411.18187}
}

@Misc{CoSh25Bk,
 author = {Le Coz, Stefan and Shakarov, Boris},
 title = {Ground {States} for the {Nonlinear} {Schr{\"o}dinger} {Equation} on {Open} {Books} and {Dimensional} {Reduction} to {Metric} {Graphs}},
 year = {2025},
 howpublished = {Preprint, {arXiv}:2512.23286 [math.{AP}]},
 url = {https://arxiv.org/abs/2512.23286},
 arXiv = {arXiv:2512.23286}
}

@article{Ko00,
    title={A semilinear elliptic equation in a thin network-shaped domain},
    author={Kosugi, S.},
    journal={J. Math. Soc. Japan},
    volume={52},
    number={3},
    pages={673--697},
    year={2000},
    publisher={The Mathematical Society of Japan}
}

@Article{KaNoPe22,
    Author = {Kairzhan, Adilbek and Noja, Diego and Pelinovsky, Dmitry E.},
    Title = {Standing waves on quantum graphs},
    FJournal = {Journal of Physics A: Mathematical and Theoretical},
    Journal = {J. Phys. A, Math. Theor.},
    ISSN = {1751-8113},
    Volume = {55},
    Number = {24},
    Pages = {51},
    Note = {Id/No 243001},
    Year = {2022},
    Language = {English},
    DOI = {10.1088/1751-8121/ac6c60},
    zbMATH = {7618040},
    Zbl = {1507.81100}
}

@article{CaFiNo17,
    author = {Cacciapuoti, Claudio and Finco, Domenico and Noja, Diego},
    title = {Ground state and orbital stability for the {NLS} equation on a general starlike graph with potentials},
    fjournal = {Nonlinearity},
    journal = {Nonlinearity},
    issn = {0951-7715},
    volume = {30},
    number = {8},
    pages = {3271--3303},
    year = {2017},
    language = {English},
    doi = {10.1088/1361-6544/aa7cc3},
    zbMATH = {6767907},
    Zbl = {1373.35284}
}

@Article{AdCaFiNo14,
    Author = {Adami, Riccardo and Cacciapuoti, Claudio and Finco, Domenico and Noja, Diego},
    Title = {Constrained energy minimization and orbital stability for the {NLS} equation on a star graph},
    FJournal = {Annales de l'Institut Henri Poincar{\'e}. Analyse Non Lin{\'e}aire},
    Journal = {Ann. Inst. Henri Poincar{\'e}, Anal. Non Lin{\'e}aire},
    ISSN = {0294-1449},
    Volume = {31},
    Number = {6},
    Pages = {1289--1310},
    Year = {2014},
    Language = {English},
    DOI = {10.1016/j.anihpc.2013.09.003},
    zbMATH = {6378484},
    Zbl = {1304.81087}
}

@inproceedings{FuOhOz08,
    title={Nonlinear {S}chr{\"o}dinger equation
    with a point defect},
    author={Fukuizumi, R. and Ohta, M. and Ozawa, T.},
    booktitle={Ann. Inst. H. Poincar\'e C Anal. Non Lin\'eaire},
    volume={25, 5},
    pages={837--845},
    year={2008},
    organization={Elsevier}
}

@book{AlbGeHo05,
 author = {Albeverio, Sergio and Gesztesy, Friedrich and H{\o}egh-Krohn, Raphael and Holden, Helge},
 title = {Solvable models in quantum mechanics.},
 edition = {2nd revised ed.},
 isbn = {0-8218-3624-2},
 year = {2005},
 publisher = {Providence, RI: AMS Chelsea Publishing},
 language = {English},
 zbMATH = {2132167},
 Zbl = {1078.81003}
}

@article{AlBrD95,
 author = {Albeverio, S. and Brze{\'z}niak, Z. and Dabrowski, L.},
 title = {Fundamental solution of the heat and {Schr{\"o}dinger} equations with point interaction},
 fjournal = {Journal of Functional Analysis},
 journal = {J. Funct. Anal.},
 issn = {0022-1236},
 volume = {130},
 number = {1},
 pages = {220--254},
 year = {1995},
 language = {English},
 doi = {10.1006/jfan.1995.1068},
 zbMATH = {764043},
 Zbl = {0822.35002}
}

@Article{KaOh09,
    Author = {Kaminaga, Masahiro and Ohta, Masahito},
    Title = {Stability of standing waves for nonlinear {Schr{\"o}dinger} equation with attractive delta potential and repulsive nonlinearity},
    FJournal = {Saitama Mathematical Journal},
    Journal = {Saitama Math. J.},
    ISSN = {0289-0739},
    Volume = {26},
    Pages = {39--48},
    Year = {2009},
    zbMATH = {5707861},
    Zbl = {1191.35254}
}

@Article{CaLi82,
    Author = {Cazenave, T. and Lions, Pierre-Louis},
    Title = {Orbital stability of standing waves for some nonlinear {Schr{\"o}dinger} equations},
    FJournal = {Communications in Mathematical Physics},
    Journal = {Commun. Math. Phys.},
    ISSN = {0010-3616},
    Volume = {85},
    Pages = {549--561},
    Year = {1982},
    Language = {English},
    DOI = {10.1007/BF01403504},
    zbMATH = {3810068},
    Zbl = {0513.35007}
}

@Book{		  BeKu13,
    author	= {Berkolaiko, Gregory and Kuchment, Peter},
    title		= {Introduction to quantum graphs},
    series	= {Mathematical Surveys and Monographs},
    volume	= 186,
    publisher	= {American Mathematical Society, Providence, RI},
    year		= 2013,
    pages		= {xiv+270},
    isbn		= {978-0-8218-9211-4},
    mrclass	= {81Q35 (05C90 31C20 34B24 34B45 81Q50)},
    mrnumber	= 3013208,
    mrreviewer	= {Delio Mugnolo}
}

@InCollection{KoSc06,
    Author = {Kostrykin, Vadim and Schrader, Robert},
    Title = {Laplacians on metric graphs: eigenvalues, resolvents and semigroups},
    BookTitle = {Quantum graphs and their applications. Proceedings of an AMS-IMS-SIAM joint summer research conference on quantum graphs and their applications, Snowbird, UT, USA, June 19--23, 2005},
    ISBN = {0-8218-3765-6},
    Pages = {201--225},
    Year = {2006},
    Publisher = {Providence, RI: American Mathematical Society (AMS)},
    Language = {English},
    zbMATH = {5082576},
    Zbl = {1122.34066}
}

@book{rabinowitz1986minimax,
    title={Minimax methods in critical point theory with applications to differential equations},
    author={Rabinowitz, Paul H.},
    volume={65},
    year={1986},
    publisher={American Mathematical Soc.},
    series={CBMS Regional Conference Series in Mathematics},
    isbn={978-0-8218-0715-6}
}

@book{Ambrosetti_Malchiodi_2007, place={Cambridge}, series={Cambridge Studies in Advanced Mathematics}, title={Nonlinear Analysis and Semilinear Elliptic Problems}, publisher={Cambridge University Press}, author={Ambrosetti, Antonio and Malchiodi, Andrea}, year={2007}, collection={Cambridge Studies in Advanced Mathematics}}

@article{DDGS,
    author  = {De Coster, Colette and Dovetta, Simone and Galant, Damien and Serra, Enrico},
    title   = {An action approach to nodal and least energy normalized solutions for nonlinear {S}chr{\"o}dinger equations},
    journal = {Ann. Inst. H. Poincar\'e C Anal. Non Lin\'eaire},
    year    = {2025},
    note    = {Online first},
    doi     = {10.4171/AIHPC/160},
    url     = {https://ems.press/journals/aihpc/articles/14353406}
}

@article{dovetta_serra_tilli_2023,
    author  = {Dovetta, Simone and Serra, Enrico and Tilli, Paolo},
    title   = {Action versus energy ground states in nonlinear {S}chr{\"o}dinger equations},
    journal = {Math.Ann.},
    year    = {2023},
    volume  = {385},
    pages   = {1545--1576},
    doi     = {10.1007/s00208-022-02382-z},
    url     = {https://doi.org/10.1007/s00208-022-02382-z}
}

@article{AdCaFiNo16En2,
 author = {Adami, Riccardo and Cacciapuoti, Claudio and Finco, Domenico and Noja, Diego},
 title = {Stable standing waves for a {NLS} on star graphs as local minimizers of the constrained energy},
 fjournal = {Journal of Differential Equations},
 journal = {J. Differ. Equations},
 issn = {0022-0396},
 volume = {260},
 number = {10},
 pages = {7397--7415},
 year = {2016},
 language = {English},
 doi = {10.1016/j.jde.2016.01.029},
 zbMATH = {6561566},
 Zbl = {1336.35316}
}

@Misc{SoVi26GSE,
    author = {Soave, Nicola and Villata, Lorenzo},
    title = {Ground states for the {NLS} equation with combined nonlinearity on periodic metric graphs},
    year = {2026},
    howpublished = {Preprint, {arXiv}:2602.01336 [math.{AP}]},
    url = {https://arxiv.org/abs/2602.01336},
    arXiv = {arXiv:2602.01336}
}

@book{Costa2007,
    title     = {An Invitation to Variational Methods in Differential Equations},
    author    = {Costa, David G.},
    publisher = {Birkhäuser Boston},
    series    = {Birkhäuser Advanced Texts Basler Lehrbücher},
    year      = {2007},
    isbn      = {978-0-8176-4535-9},
    doi       = {10.1007/978-0-8176-4536-6}
}

@article{Benci1981,
    title     = {A geometrical index for the group $S^1$ and some applications to the study of periodic solutions of ordinary differential equations},
    author    = {Benci, Vieri},
    journal   = {Comm. Pure Appl. Math.},
    volume    = {34},
    number    = {4},
    pages     = {393--432},
    year      = {1981},
    publisher = {Wiley},
    doi       = {10.1002/cpa.3160340402}
}

\end{document}